\newtheorem{theorem}{Theorem}[section]
\newtheorem{lemma}[theorem]{Lemma}
\newtheorem{corollary}[theorem]{Corollary}
\newtheorem{proposition}[theorem]{Proposition}
\newtheorem{remark}{Remark}
\newtheorem{question}[theorem]{Question}
\def\auto{{\phi}}
\def\sda{\shortdownarrow}
\def\ssda{\hbox{$\scriptscriptstyle\shortdownarrow$}}
\def\sua{\shortuparrow}
  \def\Ht{\hbox{\rm Ht}}
  \def\rev{\hbox{\rm rev}}
\def\Di{\mbox{$\Delta$}}
\def\hom{\mbox{$\psi$}}
\def\GG{{\mathbf{G}}}
\def\PP{{\mathcal{P}}}
\def\RR{{\mathcal{R}}}
\def\BB{{\mathbf{B}}}
\def\PPo{{\PP^\oplus}}
\def\BBo{{\BB^\oplus}}
\def\BBc{{\BB^+}}
\def\BBd{{\BB^\times}}
\def\SS{S_r}
\def\SSM{S^{-}_r}
\def\TT{T}
\def\TE{\TT_e}
\def\bw{{\mathbf{w}}}
\def\bc{{\mathbf{c}}}
  \def\<{\langle}
  \def\>{\rangle}
  \def\t{\tau}
\def\R{\mbox{$\mathcal R$}}
\def\A{\mbox{$\mathcal A$}}
 \def\Z{\mathbb Z}
\def\N{\mathbb N}
\def\R{\mathbb R}
\def\C{\mathbb C}
\def\revsto{\curvearrowright}
\def\mod{\operatorname{mod}}
\def\cleq{\mbox{$\, \preccurlyeq\, $}}
\def\cgeq{\mbox{$\, \succcurlyeq\, $}}
\def\resp{\hbox{\emph{resp.} }}
  \def\revstorelR{\,\mbox{$\curvearrowright_{\scriptscriptstyle{\mathcal R}}$}}
\title[A new Garside structure for braid groups of~type~$(e,e,r)$]
{A new Garside structure\\ for braid groups of~type~$(e,e,r)$}
\author{Ruth Corran}
\address{R. Corran: The American University of Paris, 147 rue de Grenelle, 75007 Paris}
\email{Ruth.Corran@aup.fr}
\author{Matthieu Picantin}
\address{M. Picantin: LIAFA UMR 7089 CNRS et Universit\'e Paris 7 Denis Diderot,
2 place Jussieu, Case 7014, F-75251 Paris Cedex 05}
\email{picantin@liafa.jussieu.fr}
\begin{document}
\date{\today}
\begin{abstract} We describe a new presentation for the complex reflection groups of type~$(e,e,r)$ and their braid groups. A diagram for this presentation is proposed. 
The presentation is a monoid presentation which is shown to give rise to a Garside structure.
A detailed study of the combinatorics of this structure leads us to describe it as~\emph{post-classical}.
\end{abstract}
\maketitle

\section{ Introduction}
\label{s:intro}

A complex reflection group is a group acting on a finite-dimensional complex vector space, that is generated by complex reflections: non-trivial elements that fix a complex hyperplane in space pointwise. Any real reflection group becomes a complex reflection group if we extend the scalars from~$\R$ to~$\C$. In particular all Coxeter groups or Weyl groups give examples of complex reflection groups, although not all complex reflection groups arise in this way. One would like to generalise as much as possible from the theory of Weyl groups and Coxeter groups to complex reflection groups.

For instance, according to~Brou\'e--Malle--Rouquier~\cite{bmr}, one can define the braid group~$\BB(W)$ attached to a complex reflection group~$\GG(W)$ as the fundamental group of the space of regular orbits. When~$\GG(W)$ is real, the braid group~$\BB(W)$ is well understood owing to Brieskorn's
presentation theorem and the subsequent structural study by Deligne and Brieskorn--Saito~\cite{brieskorn,del,brsa}: their main combinatorial results express that $\BB(W)$ is the group of fractions of a monoid in which divisibility has good properties, and, in addition, there exists a distinguished element whose divisors encode the whole structure: in modern terminology, such a monoid is called Garside.
The group of fractions of a Garside monoid is called a Garside group. Garside groups enjoy many
remarkable group-theoretical, cohomological and homotopy-theoretical properties. Finding (possibly various) Garside structures for a given group becomes a natural challenge.

\subsection{ The groups~$\GG(e,e,r)$ and~$\BB(e,e,r)$}
\label{ss:groups}

The classification of (irreducible) finite complex reflection groups was obtained 
by~Shephard and~Todd \cite{st}:
\begin{itemize}
\item an infinite family~$\GG(de, e, r)$ where $d,e,r$ are arbitrary positive integral parameters;
\item 34 exceptions, labelled~$\GG_4,\ldots,\GG_{37}$.
\end{itemize}

The infinite family includes the four infinite families of finite Coxeter groups: $\GG(1, 1, r)\sim\GG(A_{r-1})$, $\GG(2, 1, r)\sim\GG(B_r)$, $\GG(2, 2, r)\sim\GG(D_r)$ and~$\GG(e, e, 2)\sim\GG(I_2(e))$. For
all other values of the parameters, $\GG(de, e, r)$ is an irreducible monomial complex
reflection group of rank~$r$, with no real structure.

In the infinite family, one may consider, in addition to the real groups, the complex subfamily~$\GG(e, e, r)$---note that
this subseries contains the $D$-type and~$I_2$-type Coxeter series---and our objects of interest are the possible Garside structures for the braid group~$\BB(e, e, r)$.

The reflection groups of type~$(e,e,r)$ are defined in terms of positive integral parameters~$e,r$:
$$\GG(e,e,r) = \left\{ 
\begin{array}{cc}
r \times r \mbox{ monomial matrices } \\
(x_{ij}) \mbox{ over } \{0\} \cup \mu_{e} 
\end{array}
\left | \ 
\prod_{x_{ij}\neq 0} x_{ij} = 1 \right. \right\},$$
\vbox{that is, as the group of~$r \times r$ matrices consisting of:\begin{list}{$\bullet$}{\leftmargin=20pt}

\item monomial matrices (each row and column has a unique non-zero entry), 
\item with all non-zero entries lying in~$\mathbf{\mu}_{ e}$, the~$e$-th roots of unity, and
\item for which the product of the non-zero entries is 1.
\end{list}}

The group~$\GG(e,e,r)$ is generated by reflections of~$\C^r$.
There are hyperplanes  in~$\C^r$ corresponding to the reflections of the reflection group.

The corresponding \emph{braid group}~$\BB(e,e,r)$ is defined in terms of the 
fundamental group of a quotient of the hyperplane complement. 
We do not make recourse to this definition; our starting point will be
known presentations for these braid groups. 

\subsection{ Brou\'e--Malle--Rouquier presentation}
\label{ss:bmr}

\def\sBraid{(R_1)}
\def\sComm{(R_2)}
\def\eLink{(P_1)}
\def\stBraid{(P_2)}
\def\stComm{(P_3)}
\def\dbleBar{(P_4)}
\def\stBraidZ{(R_3)}
\def\stCommZ{(R_4)}
\def\tCircle{(R_5)}

Such a presentation for the braid group~$\BB(e,e,r)$ may be found in~\cite{bmr}:
\label{BMRpres}
\begin{itemize}
\item
Generators: $\{t_0,t_1\} \cup \SS$ with~$\SS = \{s_3,\ldots,s_r\}$, 
and 
\item 
Relations: 
$$\begin{array}{rcl}
\sBraid	& s_i s_j s_i = s_j s_i s_j	&\mbox{ for }|i-j|=1,\\
\sComm	& s_i s_j = s_j s_i		&\mbox{ for }|i-j|>1,\\
\eLink	& \<t_1 t_0\>^e =  \<t_0 t_1\>^{e} \\
\stBraid	& s_3 t_i s_3 =t_i s_3 t_i & \mbox{ for  }i = 0,1, \\
\stComm	& s_j t_i = t_i s_j & \mbox{ for  } i = 0,1, \mbox{ and } 4 \leq j \leq r, \mbox{ and }\\
\dbleBar	& (s_3 t_1 t_0)^2 = (t_1 t_0 s_3)^2,\end{array}
$$
\end{itemize}
where~$\<ab\>^e$ denotes the alternating product of~$a$ and~$b$ with~$e$ terms. The collections of relations~$\sBraid$ and~$\sComm$ are the usual braid relations on those generators in~$\SS$. 

Furthermore, it is shown in~\cite{bmr} that by adding the relation~$a^2=1$ for all generators~$a$,  
a presentation for the reflection group~$\GG(e,e,r)$ is obtained. 
The generators in this case are all reflections in~$\GG(e,e,r)$.

\begin{figure}[h]
\vskip-8mm
\hskip-8.7cm
\psset{unit=4mm}
%
\qline(4,-6.5)(7,-5)
\qline(4,-3.5)(7,-5)
\qline(4,-6.5)(4,-3.5)
\uput{.3}[180](4,-5){$e$}
\qline(7,-5)(11,-5)
\qline(11,-5)(12.5,-5)
\psset{linestyle=dashed, dash=1pt 3pt, linecolor=black}
\qline(11,-5)(16,-5)
\psset{linestyle=solid}
\qline(14.5,-5)(16,-5)
\qline(16,-5)(20,-5)
\qline(7,-5.1)(5,-5.1)
\qline(7,-4.9)(5,-4.9)
\pscircle*[linecolor=white](4,-6.5){.4}
\pscircle(4,-6.5){.4}
\uput{.5}[180](4,-6.5){$t_0$}
\pscircle*[linecolor=white](4,-3.5){.4}
\pscircle(4,-3.5){.4}
\uput{.5}[180](4,-3,5){$t_1$}
\pscircle*[linecolor=white](7,-5){.4}
\pscircle(7,-5){.4}
\uput{.5}[90](7,-5){$s_3$}
\pscircle*[linecolor=white](11,-5){.4}
\pscircle(11,-5){.4}
\uput{.5}[90](11,-5){$s_4$}
\pscircle*[linecolor=white](16,-5){.4}
\pscircle(16,-5){.4}
\uput{.5}[90](16,-5){$s_{r-1}$}
\pscircle*[linecolor=white](20,-5){.4}
\pscircle(20,-5){.4}
\uput{.5}[90](20,-5){$s_{r}$}
\vskip30mm
\caption{The diagram of type~$(e,e,r)$ by~\cite{bmr}.}
\label{BeerBMRDiagram}
\end{figure}

A diagram shown in Figure~\ref{BeerBMRDiagram} is proposed in~\cite{bmr} for this presentation.
This diagram is interpreted,  where possible,  as a Coxeter diagram. The vertices correspond to generators, and the edges to relations:  for each pair of vertices~$a$ and~$b$,
\begin{list}{$\bullet$}{\leftmargin=18pt}
\item no edge connecting the vertices corresponds to a relation~$ab = ba$,
\item an unlabelled edge connecting the vertices corresponds to~$aba = bab$,
\item an edge labelled~$e$ connecting the vertices corresponds~$\<ab\>^e = \<ba\>^e$.
\end{list}
The first two of these give the usual braid relations and the relations~$(P_2)$ and~$(P_3)$; the third gives the relation~$(P_1)$.
It remains to interpret the triangle with short double-line in the interior; in the diagram above, this represents the relation~$(P_4)$: $s_3 (t_1 t_0) s_3 (t_1 t_0) = (t_1 t_0) s_3 (t_1 t_0) s_3$. (This would be a relation corresponding to an edge labelled~$4$ between nodes~$s_3$ and~$t_1 t_0$, if the latter were a node. Conventionally, edges labelled by 4 in Coxeter diagrams are designated by double-lines.)

In the case of finite real reflection groups---that is, finite Coxeter groups---an enormous amount of understanding about
the reflection and braid groups arises from the Coxeter presentations coming from 
the  choice of generators  corresponding to a simple set of roots in a 
root system. In this paper we describe presentations of the  reflection groups~$\GG(e,e,r)$
and their braid groups~$\BB(e,e,r)$ which have some properties like those of Coxeter presentations.

\subsection{ Classical \emph{vs} dual braid monoids}
\label{ss:versus}

The success of~\cite{bkl}, which describes an alternative braid monoid for the ordinary braid group~$\BB(A_{n-1})$, provided the impetus to unify the different approaches by introducing a general framework: the Garside theory (see~\cite{dfx,dehornoy:garside,bdm,bessisdual}). This terminology refers to the fact that, although dealing only with the ordinary $n$-strand braid group~$\BB(A_{n-1})$, the pioneer paper by~Garside~\cite{garside} stands out in which the foundation is laid for a more systematic study of the divisibility theory in a well-chosen submonoid of the braid group.

Garside structures (see~Subsection~\ref{ss:basicsGarside} for details) are desirable because they allow fast calculation in the group (solution to word and conjugacy problems) by convenient canonical or normal forms. A given Garside group admits possibly several Garside structures, each providing an associated biautomatic structure, etc. Known examples of Garside groups are braid groups, torus link groups, one-relator groups with center, etc. In the particular case which concerns us here, that is, in the case of braid groups, two Garside structures---when defined---seem to be most natural: we will use here the term of \emph{classical braid monoid} (short for Artin--Brieskorn--Deligne--Garside--Saito--Tits monoid) and the term of \emph{dual braid monoid} proposed by~Bessis in~\cite{bessisdual} (corresponding to those monoids studied in~\cite[...]{bkl,bessisdual,cras,bcArXiv,bc}. Given a reflection group~$\GG(de,e,r)$, when defined and when no confusion is possible, we will write~$\BBc(de,e,r)$ for the classical braid monoid and~$\BBd(de,e,r)$ for the dual braid monoid.

The presentation in~\cite{bmr} does \emph{not} give rise to a Garside structure\footnote{In particular, this presentation can be viewed as a monoid presentation; the associated monoid is not cancellative, so does not embed in a group (see~\cite{co,bc}).}. A presentation giving rise to a Garside monoid for~$\BB(e,e,r)$ was obtained in~\cite{bc}; this monoid fits into the context of dual braid monoids and will be denoted by~$\BBd(e,e,r)$. In that case, the generators are in bijection with the reflections in~$\GG(e,e,r)$. In this paper we introduce a new presentation for~$\BB(e,e,r)$ which again gives rise to a Garside monoid, but which has more in common with the classical braid monoids than the dual braid monoids.

\bigbreak

The organization of the rest of the paper is as follows. In Section~\ref{s:newPresentation}, a new presentation (with an associated diagram) for~$\BB(e,e,r)$ is shown (Theorem~\ref{beerpresthm}). In Section~\ref{s:newGarsideStructure} we prove that this new presentation gives rise to a Garside monoid~$\BBo(e,e,r)$ (Theorem~\ref{th:garside}). The underlying Garside structure is then investigated (Theorem~\ref{Simples}). Finally, this allows us to situate ~$\BBo(e,e,r)$ as well as possible with respect to the dichotomy between classical and dual braid monoids (Subsection~\ref{ss:howClassical}).

\section{ A new presentation}
\label{s:newPresentation}

In this section we first introduce the new presentation for the braid group~$\BB(e,e,r)$, propose a diagram for the presentation, and then discuss its relationship to the reflection group and to other braid groups. Finally, after considering the notion of circle,
we prove that the given presentation does present the group~$\BB(e,e,r)$.

\subsection{ New presentation of type~$(e,e,r)$}
\label{ss:eerPres}

Let~$\PPo(e,e,r)$ denote the presentation given by:
\begin{itemize}
\item[$\bullet$]
Generators: $\TE\cup\SS$ with~$\TE = \{t_i \mid i \in \Z/e \}$ and~$\SS = \{s_3, \ldots, s_r\}$, and
\item 
Relations: 
$$\begin{array}{rcl}
\sBraid	& s_i s_j s_i = s_j s_i s_j	&\mbox{for~$|i-j|=1$,} \\
\sComm	& s_i s_j = s_j s_i		&\mbox{for~$|i-j|>1$,} \\
\stBraidZ	& s_3 t_i s_3 = t_i s_3 t_i	&\mbox{for~$i \in \Z/e$,} \\
\stCommZ	& s_j t_i = t_i s_j		&\mbox{for~$i \in \Z/e$ and~$4\leq i\leq r$, and } \\
\tCircle	& t_{i} t_{i-1} = t_{j} t_{j-1}	&\mbox{for~$i, j \in \Z/e$.}
\end{array}
$$
\end{itemize}

We will show in Subsection~\ref{ss:isBeerPres}:

\begin{theorem}
\label{beerpresthm}
The presentation~$\PPo(e,e,r)$ is a group presentation for the braid group~$\BB(e,e,r)$.
Furthermore, adding the relations~$a^2=1$ for all generators~$a$ gives a presentation of 
the reflection group~$\GG(e,e,r)$. In particular, the generators of this presentation
are all reflections. 
\end{theorem}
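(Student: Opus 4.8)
The plan is to avoid the topological definition entirely and instead deduce the statement from the Brou\'e--Malle--Rouquier presentation recalled in Subsection~\ref{ss:bmr}, which is known by \cite{bmr} to present $\BB(e,e,r)$. Writing $\BB$ for the group presented by $\PPo(e,e,r)$, I would exhibit a chain of Tietze transformations carrying the generating set $\{t_0,t_1\}\cup\SS$ with relations $(R_1),(R_2),(P_1),(P_2),(P_3),(P_4)$ to the generating set $\TE\cup\SS$ with relations $(R_1)$--$(R_5)$. The dictionary between the two generating sets is forced by $(R_5)$: setting $\delta=t_1t_0$, the relations $t_it_{i-1}=\delta$ let one define $t_i=\delta\,t_{i-1}^{-1}$ recursively, so the extra generators $t_2,\dots,t_{e-1}$ become words in $t_0,t_1$, and adjoining these as definitional relations is a Tietze move that does not change the group. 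The step-two recursion $t_{i+2}=\delta t_i\delta^{-1}$ then yields the closed forms $t_{2k}=\delta^{k}t_0\delta^{-k}$ and $t_{2k+1}=\delta^{k}t_1\delta^{-k}$, which I will use throughout.

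Next I would dispatch the ``easy'' correspondences. Since each $t_i$ ($i\ge2$) is, by the closed form, a conjugate of $t_0$ or $t_1$, the commutations $(R_4)$ with $s_j$ $(j\ge4)$ follow at once from the two instances $(P_3)$, and conversely $(P_3)$ is a special case of $(R_4)$; likewise $(P_2)$ is exactly $(R_3)$ for $i=0,1$. The instances of $(R_5)$ for $i=2,\dots,e-1$ are precisely the relations I adjoined, so the only remaining piece of $(R_5)$ is its cyclic instance $t_0t_{e-1}=t_1t_0$. Substituting the closed form for $t_{e-1}$ and separating parities, a direct computation reduces this single relation to $t_0\delta^{\,m-1}t_1=\delta^{m}$ when $e=2m$ and to $t_0\delta^{\,m}=\delta^{m}t_1$ when $e=2m+1$; in either case this is exactly the alternating relation $(P_1)$, since $\langle t_1t_0\rangle^{e}$ and $\langle t_0t_1\rangle^{e}$ have precisely these two expressions. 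Hence $(P_1)$ and the cyclic instance of $(R_5)$ are interchangeable.

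What then remains---and what I expect to be the \emph{main obstacle}---is the equivalence, modulo all the relations already matched, of the single relation $(P_4)$ with the full family $(R_3)$ for $i\ge2$. Rewriting $(P_4)$ as $(s_3\delta)^2=(\delta s_3)^2$ and introducing $s_3^{(k)}=\delta^{k}s_3\delta^{-k}$, a short manipulation shows $(P_4)$ is equivalent to $s_3^{(0)}s_3^{(1)}=s_3^{(1)}s_3^{(2)}$, and hence, on conjugating by powers of $\delta$, to the statement that $s_3^{(k)}s_3^{(k+1)}$ is independent of $k$---a ``circle'' relation for the family $s_3^{(k)}$ exactly parallel to $(R_5)$ for the $t_i$. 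On the other side, $\delta t_i\delta^{-1}=t_{i+2}$, so conjugating $(R_3)$ for index $i$ by $\delta$ produces the index-$(i+2)$ braid relation, but written with $s_3^{(1)}$ in place of $s_3$. Reconciling these two interleaved circles is the crux: to pass from the Brou\'e--Malle--Rouquier relations to $\PPo$ I must derive every $(R_3)_i$ from the base cases $i=0,1$ together with $(P_4)$, and conversely derive $(P_4)$ from the $(R_3)_i$ together with $(R_5)$. Both directions rest on the cyclic combinatorics of the $t_i$, and this is where essentially all of the work lies; everything earlier is bookkeeping.

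Finally, for the reflection-group assertion I would push the same Tietze dictionary through after adjoining $a^2=1$ for every generator. Adding these relations to the Brou\'e--Malle--Rouquier presentation gives $\GG(e,e,r)$ by \cite{bmr}, and in that quotient each new generator $t_i$ $(i\ge2)$, being by the closed form a conjugate of $t_0$ or $t_1$, is again an involution; thus $t_i^{2}=1$ is automatic, so the two systems of squared relations are interchangeable under the same transformations and $\PPo(e,e,r)$ with all $a^2=1$ presents $\GG(e,e,r)$. Since a conjugate of a reflection is a reflection, every $t_i$ is a reflection, and the $s_j$ are reflections already in the Brou\'e--Malle--Rouquier picture, which gives the last claim.
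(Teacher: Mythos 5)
Your overall route coincides with the paper's: adjoin the generators $t_i$, $2\leq i\leq e-1$, as conjugates of $t_0,t_1$ by Tietze moves starting from the presentation of~\cite{bmr}, then match the two relation families. Your bookkeeping is correct and parallels the paper's circle lemmas: the step-two recursion $t_{i+2}=\delta t_i\delta^{-1}$ and the closed forms are Lemma~\ref{tauUpsi}; your parity computation identifying the single cyclic instance $t_0t_{e-1}=t_1t_0$ of $(R_5)$ with $(P_1)$ is a correct (and pleasantly direct) substitute for the induction in Lemma~\ref{CoxeterRelationforClosedCircle}; the deduction of $(R_4)$ from $(P_3)$ via the closed forms is Lemma~\ref{commuters}. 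But there is a genuine gap exactly at the point you yourself flag: you never derive the braid relations $(R_3)_i$ for $i\geq 2$ from the base cases, nor $(P_4)$ from $(R_3)$ and $(R_5)$. These two derivations \emph{are} the proof --- they occupy Lemma~\ref{gibraidrels}(a) and (b) in the paper, each a nontrivial chain of rewritings (the induction of Figure~\ref{BraidRelDiagram}, which at stage $i+1$ uses the circle relations together with \emph{all} braid relations $(R_3)_j$ for $j\leq i$, and the eight-step computation of Figure~\ref{Length4Relation}). Declaring that ``this is where essentially all of the work lies'' and stopping there leaves the theorem unproved: a reader following your text has no argument that the interleaved circles reconcile.

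Two further remarks on the crux as you have framed it. First, your plan to derive $(R_3)_i$ ``from the base cases together with $(P_4)$'' misidentifies what is needed: Lemma~\ref{gibraidrels}(a) shows the base braid relations and the circle relations alone suffice, and $(P_4)$ is then a \emph{consequence} (part (b), applied with $a=s_3$), not an input --- so $(P_4)$ is redundant in the presentation of~\cite{bmr} once the $t_i$ are adjoined. Second, your ``two interleaved circles'' observation (conjugating $(R_3)_i$ by $\delta$ shifts $i$ by $2$ but replaces $s_3$ by $s_3^{(1)}=\delta s_3\delta^{-1}$) correctly diagnoses why naive conjugation fails, but the missing idea is how to return $s_3^{(1)}$ to $s_3$: the paper avoids conjugation altogether, instead expanding $t_{i+1}$ as $t_it_{i-1}t_i^{-1}$ inside $t_{i+1}at_{i+1}^{-1}$ and cascading the lower-index braid relations. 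Your final paragraph on the reflection group (adding $a^2=1$, conjugates of reflections are reflections) is correct and agrees with the paper, but it rests on the unestablished group isomorphism, so it cannot stand until the main computation is supplied.
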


The new generating set is a superset of the generating set of~\cite{bmr}. The new generators~$t_i$ for~$2 \leq i\leq e-1$ may be defined inductively by~$t_i = t_{i-1} t_{i-2} t_{i-1}^{-1},$ and so are just conjugates of the original generators.

The presentation~$\PPo(e,e,r)$ can be viewed as a monoid presentation. The corresponding monoid~$\BBo(e,e,r)$ will be the starting point for constructing the Garside structure for~$\BB(e,e,r)$, and we will see:

\begin{proposition}
\label{MonoidIsomorphism}
The submonoid of~$\BB(e,e,r)$ generated by~$\TE\cup\SS$ is isomorphic to the monoid~$\BBo(e,e,r)$, that is, it can be presented by~$\PPo(e,e,r)$ considered as a monoid presentation.
\end{proposition}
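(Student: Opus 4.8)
The plan is to write down the obvious map and reduce everything to an embedding question. By the universal property of monoid presentations there is a canonical monoid homomorphism $\theta\colon \BBo(e,e,r) \to \BB(e,e,r)$ sending each generator in $\TE\cup\SS$ to the element of the braid group bearing the same name. This $\theta$ is well defined precisely because every defining relation of $\PPo(e,e,r)$, read as a monoid relation, already holds among these generators inside $\BB(e,e,r)$: by Theorem~\ref{beerpresthm} those very relations constitute a group presentation of $\BB(e,e,r)$. Since $\theta$ is a monoid homomorphism carrying generators to generators, its image is exactly the submonoid of $\BB(e,e,r)$ generated by $\TE\cup\SS$, so $\theta$ is automatically surjective onto that submonoid. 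The entire content of the proposition is therefore the \emph{injectivity} of $\theta$.

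To establish injectivity I would factor $\theta$ through the enveloping (universal) group of the monoid $\BBo(e,e,r)$. For a monoid presented by $\langle X \mid R\rangle$, its enveloping group is the group $\langle X \mid R\rangle$ on the same data; by Theorem~\ref{beerpresthm} that group is exactly $\BB(e,e,r)$. Hence $\theta$ coincides with the canonical map from $\BBo(e,e,r)$ to its enveloping group, and injectivity of $\theta$ is equivalent to the assertion that $\BBo(e,e,r)$ embeds into that enveloping group. In other words, one must show that two positive words equal as elements of $\BB(e,e,r)$ are already equal in $\BBo(e,e,r)$; this is the substantive half of the claim.

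The embedding itself is supplied by the Garside structure on $\BBo(e,e,r)$ obtained in Section~\ref{s:newGarsideStructure} (Theorem~\ref{th:garside}): a Garside monoid is left- and right-cancellative and any two of its elements possess common left and right multiples, so by Ore's theorem it embeds into its group of (left, equivalently right) fractions, and that group of fractions is precisely the enveloping group. I therefore expect the genuine obstacle to lie not in this final deduction but in the underlying monoid-theoretic facts for $\BBo(e,e,r)$ --- cancellativity together with the existence of common multiples (the Ore conditions) --- which is exactly what the analysis of Section~\ref{s:newGarsideStructure} must secure. Once those are in hand, Ore's theorem yields the embedding $\BBo(e,e,r)\hookrightarrow \BB(e,e,r)$; combined with the surjectivity onto the submonoid noted above, $\theta$ is the desired isomorphism.
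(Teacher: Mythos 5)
Your proposal is correct and takes essentially the same route as the paper: after Theorem~\ref{th:garside}, the paper deduces the proposition ``for free'' because a Garside monoid satisfies Ore's conditions and hence embeds in its group of fractions, which is the group defined by the same presentation, namely~$\BB(e,e,r)$ by Theorem~\ref{beerpresthm}. Your factorization of the canonical map through the enveloping group merely spells out the identification the paper leaves implicit.
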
à

\subsection{ New diagram of type~$(e,e,r)$}
\label{ss:diagram}

We propose the diagram shown in Figure~\ref{BeerDiagram} as a type~$(e,e,r)$ analogy to the Coxeter diagrams for the real reflection group case. 

\begin{figure}[h]
\vskip4mm
\hskip-8cm
\psset{unit=4mm}
\qline(1,-10.3)(7,-5)
\qline(1,0.3)(7,-5)
\qline(3,-5)(7,-5)
\qline(2.6,-7.9)(7,-5)
\qline(2.6,-2.1)(7,-5)
\qline(-0.6,-7.9)(7,-5)
\qline(-0.6,-2.1)(7,-5)
\qline(7,-5)(11,-5)
\qline(11,-5)(12.5,-5)
\psset{linestyle=dashed, dash=1pt 3pt, linecolor=black}
\qline(11,-5)(16,-5)
\psset{linestyle=solid}
\qline(14.5,-5)(16,-5)
\qline(16,-5)(20,-5)
\psellipse*[linecolor=vlgray](1,-5)(2,5)
\psellipse[linecolor=black](1,-5)(2,5)
\pscurve[linecolor=white, linewidth=1.6pt](-0.82,-3)(-0.97,-5)(-0.82,-7)
\psellipse[linestyle=dashed, dash=1pt 4pt,linecolor=black](1,-5)(2,5)
\psset{linestyle=dashed, dash=1pt 3pt, linecolor=black}
\qline(-0.6,-7.9)(7,-5)
\qline(-0.6,-2.1)(7,-5)
\psset{linestyle=solid}
\pscircle*[linecolor=white](1,0){.4}
\pscircle(1,0){.4}
\uput{.5}[160](1,0){$t_2$}
\pscircle*[linecolor=white](1,-10){.4}
\pscircle(1,-10){.4}
\uput{.5}[180](1,-10){$t_{_{e-2}}$}
\pscircle*[linecolor=white](3,-5){.4}
\pscircle(3,-5){.4}
\uput{.5}[180](3,-5){$t_0$}
\pscircle*[linecolor=white](2.6,-7.9){.4}
\pscircle(2.6,-7.9){.4}
\uput{.5}[180](2.6,-7.9){$t_{_{e-1}}$}
\pscircle*[linecolor=white](-0.6,-7.9){.4}
\pscircle(-0.6,-7.9){.4}
%
\pscircle*[linecolor=white](2.6,-2.1){.4}
\pscircle(2.6,-2.1){.4}
\uput{.5}[180](2.6,-2.1){$t_1$}
\pscircle*[linecolor=white](-0.6,-2.1){.4}
\pscircle(-0.6,-2.1){.4}
%
\pscircle*[linecolor=white](7,-5){.4}
\pscircle(7,-5){.4}
\uput{.5}[90](7,-5){$s_3$}
\pscircle*[linecolor=white](11,-5){.4}
\pscircle(11,-5){.4}
\uput{.5}[90](11,-5){$s_4$}
\pscircle*[linecolor=white](16,-5){.4}
\pscircle(16,-5){.4}
\uput{.5}[90](16,-5){$s_{r-1}$}
\pscircle*[linecolor=white](20,-5){.4}
\pscircle(20,-5){.4}
\uput{.5}[90](20,-5){$s_{r}$}
\vskip42mm
\caption{The new diagram of type~$(e,e,r)$: there are~$e$ nodes on the circle.}
\label{BeerDiagram}
\end{figure}

This diagram is again to be read as a Coxeter diagram  where possible, that is, when vertices~$a$ and~$b$ are joined by an (unlabelled) edge, there is a relation~$aba=bab$. 
The circle with~$e$ vertices at the left of the diagram corresponds to the \emph{circle}~$\{t_i \mid i \in \Z/e\}$ (see Subsection~\ref{sss:circles}).
Whenever two vertices~$a$ and~$b$ lie on this circle, there is a relation of the form
$aa^\sda=bb^\sda$ where~$a^\sda$ and~$b^\sda$ are the nodes immediately preceding~$a$ and~$b$ respectively on the circle.
If two nodes~$a$ and~$b$ are neither connected by an edge nor both lie on the disc, then there is 
a relation of the form~$ab=ba$---that is, the corresponding generators commute.

\textbf{The diagram automorphism~$\sda$ and its inverse~$\sua$.} 
Define the map~$\sda$ by~$s_j^\sda = s_j$ for all~$3 \leq j \leq r$ and~$t_i^\sda = t_{i-1}$ for all~$i \in \Z/e$. Since $\rho_1^\sda = \rho_2^\sda$ itself is a defining relation whenever $\rho_1 = \rho_2$ is a defining relation, then $\sda$ is a well-defined monoid morphism of~$\BBo(e,e,r)$. Furthermore, since
the whole set of relations defining~$\BBo(e,e,r)$ is stable under~$\sda$,
the map~$\sda$ is an automorphism of~$\BBo(e,e,r)$. The automorphism~$\sda$ rotates the circle
in the negative direction by a turn of~$\frac{2 \pi}{e}$.The same can be said for its inverse~$\sua$.
These diagram automorphisms give rise to automorphisms of the braid group~$\BB(e,e,r)$ as well as of the reflection group~$\GG(e,e,r)$. 
Moreover, these diagram automorphisms send (braid) reflections to (braid) reflections.

\begin{proposition}
The~$\sda$-trivial subgroup of the braid group~$\BB(e,e,r)$ is isomorphic to the braid group~$\BB(B_{r-1})$. 
\end{proposition}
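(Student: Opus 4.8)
Throughout, write $F = \{\,g \in \BB(e,e,r) : g^\sda = g\,\}$ for the $\sda$-trivial (that is, $\sda$-fixed) subgroup. The plan is to produce an explicit isomorphism $\BB(B_{r-1}) \to F$ by folding the circle down to a single type-$B$ node. First I set $\delta := t_1 t_0$. By the circle relation $(R_5)$ one has $\delta = t_i t_{i-1}$ for every $i \in \Z/e$; in particular $\delta^\sda = (t_1 t_0)^\sda = t_0 t_{e-1}$, which by $(R_5)$ (taken with $i=0$) equals $t_1 t_0 = \delta$. Together with $s_j^\sda = s_j$ this shows $\delta, s_3, \dots, s_r \in F$. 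I would then verify that these $r-1$ elements satisfy the defining relations of the Artin braid group of type $B_{r-1}$ on the ordered tuple $(\delta, s_3, s_4, \dots, s_r)$: the relations among the $s_j$ are $(R_1)$ and $(R_2)$; the commutations $\delta s_j = s_j \delta$ for $j \geq 4$ follow from $(R_4)$, since each such $s_j$ commutes with every $t_i$ and hence with $t_1 t_0$; and the length-four relation $(s_3\delta)^2 = (\delta s_3)^2$ is exactly relation $(P_4)$ of the Brou\'e--Malle--Rouquier presentation, valid in $\BB(e,e,r)$ by Theorem~\ref{beerpresthm}, rewritten via $t_1 t_0 = \delta$.

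These relations yield a homomorphism $\beta \colon \BB(B_{r-1}) \to \BB(e,e,r)$ carrying the standard generators to $\delta, s_3, \dots, s_r$, with image $H := \langle \delta, s_3, \dots, s_r\rangle \subseteq F$. It remains to prove that $\beta$ is injective and that $H = F$, and this is where the real work lies. My preferred route passes through the Garside structure on $\BBo(e,e,r)$ (Theorem~\ref{th:garside}), which $\sda$ preserves: as noted above, $\sda$ permutes the atoms and stabilises the whole set of defining relations, so it fixes the Garside element $\Delta$ (the right-lcm of the atoms). By the general theory of finite-order structure-preserving automorphisms of Garside monoids (see~\cite{dehornoy:garside,bessisdual}), the fixed submonoid $\BBo(e,e,r)^\sda$ is again Garside, with the same Garside element $\Delta$, and its group of fractions is exactly $F$. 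Thus it suffices to identify this fixed monoid.

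The atoms of $\BBo(e,e,r)^\sda$ are the $\cleq$-minimal nontrivial $\sda$-fixed elements. The fixed atoms $s_3, \dots, s_r$ of the ambient monoid remain atoms here, and I claim $\delta$ is the only further atom: the circle relations give $\delta = t_i t_{i-1}$ for all $i$, so every generator $t_i$ left-divides $\delta$, whence the left-divisors of $\delta$ are $1$, the $t_i$, and $\delta$ itself; since $\sda$ permutes the $t_i$ freely for $e \geq 2$, the only nontrivial $\sda$-fixed left-divisor of $\delta$ is $\delta$, so $\delta$ is an atom of the fixed monoid rather than a proper product of smaller fixed elements. The induced relations among $\delta, s_3, \dots, s_r$ are precisely those checked above, so the fixed monoid is isomorphic to the classical braid monoid $\BBc(B_{r-1})$; passing to groups of fractions gives $F \cong \BB(B_{r-1})$, and in particular $\beta$ is the desired isomorphism. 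The main obstacle is exactly this identification: proving that folding the Garside structure of $\BBo(e,e,r)$ along $\sda$ produces no atoms beyond $\delta, s_3, \dots, s_r$ and no relations beyond the type-$B_{r-1}$ ones, which requires a careful analysis of the lattice of $\sda$-invariant simple elements (the $\sda$-fixed divisors of $\Delta$). A geometric alternative would be to realise $\sda$ as conjugation by the extra generator inside $\BB(e,1,r) \supseteq \BB(e,e,r)$ and to identify $F$ with a centraliser, but this imports an equally substantial computation. As a sanity check, the case $e = 2$ recovers the classical fact that the fixed subgroup of $\BB(D_r)$ under its order-two diagram flip is $\BB(B_{r-1})$.
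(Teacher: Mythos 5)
Your route is in fact the paper's route: the paper's entire proof of this proposition is the single line ``The proof follows the one of~\cite[Proposition~9.4]{dfx}'', i.e.\ precisely the fixed-point machinery you invoke --- $\sda$ is a finite-order automorphism of the Garside monoid~$\BBo(e,e,r)$ preserving the Garside structure (it permutes the atoms, stabilises the defining relations, and fixes~$\Delta$, since $\tau^\sda=\tau$), the fixed submonoid is again Garside, its group of fractions is the $\sda$-trivial subgroup (via uniqueness of irreducible fractions: if $g=ab^{-1}$ with $a\wedge b=1$, then $g^\sda=g$ forces $a^\sda=a$ and $b^\sda=b$), and the fixed monoid is identified with~$\BBc(B_{r-1})$ on the generators $\tau, s_3,\ldots,s_r$, matching the type-$B$ embedding $q_1\mapsto t_it_{i-1}$, $q_j\mapsto s_{j+1}$ of Subsection~\ref{sss:folding}. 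So the strategy is sound and is the intended one.

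The one place where your write-up falls short of a proof is the step you yourself flag as ``the main obstacle'': you verify that $\delta=\tau$ is an atom of the fixed monoid, but you prove neither that $\delta,s_3,\ldots,s_r$ exhaust the atoms (equivalently, generate the fixed monoid) nor that no relations beyond those of type~$B_{r-1}$ hold. Both points close quickly with tools already in the paper, and less ``careful analysis of the lattice of $\sda$-invariant simples'' is needed than you fear. For generation: if $x$ is a nontrivial $\sda$-fixed element, the set of atoms left-dividing~$x$ is stable under~$\sda$, because $t_i\cleq x$ implies $t_{i-1}=t_i^\sda\cleq x^\sda=x$; hence either some (fixed) $s_j$ left-divides~$x$, or every element of~$\TE$ does, in which case their lcm~$\tau$ does. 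In either case cancel the fixed left factor, observe that the quotient is again $\sda$-fixed by left cancellativity, and induct on length. For the absence of extra relations: this is exactly the injectivity of $\BBc(B_{r-1})\hookrightarrow\BBo(e,e,r)$ already established in the paper via the criteria of~\cite{crisp} or~\cite[Proposition~5.4]{dehornoy:embedding}, so you need not re-derive it from your relation-checking, which by itself only yields the homomorphism~$\beta$. With these two supplements your argument is complete and coincides with the proof the paper outsources to~\cite{dfx}. (One small caveat: your parenthetical ``$e\geq2$'' is essential --- for $e=1$ the map~$\sda$ is trivial and the statement fails --- but that restriction is implicit throughout.)
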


\begin{proof} The proof follows the one of~\cite[Proposition~9.4]{dfx}.
\end{proof}

\textbf{The diagram anti-isomorphism~$\rev$.} Let~$\rev(\PPo(e,e,r))$ denote the presentation on the same generators as~$\PPo(e,e,r)$, and relations obtained by reversing all its relations. This presentation has a diagram corresponding to the mirror image of the diagram for~$\PPo(e,e,r)$. Let~$\rev(\BBo(e,e,r))$ be the monoid defined by~$\rev(\PPo(e,e,r))$.

\begin{lemma}
\label{MisomRevM}
The monoid~$\BBo(e,e,r)$ is isomorphic to~$\rev(\BBo(e,e,r))$
by the isomorphism~$\varphi$ which sends~$t_i \mapsto t_{-i}$ and~$s_j \mapsto s_j$.
\end{lemma}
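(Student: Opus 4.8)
The plan is to verify that the generator-level assignment $\varphi\colon t_i \mapsto t_{-i}$, $s_j \mapsto s_j$ respects the defining relations, and then to produce its inverse by symmetry. First I would record how reversal acts on the presentation $\PPo(e,e,r)$. The braid relations $\sBraid$ and $\stBraidZ$ have a palindrome on each side, and the commutation relations $\sComm$ and $\stCommZ$ are symmetric, so each of these relations coincides with its own reverse. Consequently the only family that genuinely changes in passing to $\rev(\BBo(e,e,r))$ is the circle relation $\tCircle$, which becomes $t_{i-1}t_i = t_{j-1}t_j$.

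To show $\varphi$ is a well-defined monoid homomorphism $\BBo(e,e,r)\to\rev(\BBo(e,e,r))$, I would check that the $\varphi$-image of each defining relation of $\PPo(e,e,r)$ holds in $\rev(\BBo(e,e,r))$. For $\sBraid$, $\sComm$, $\stBraidZ$ and $\stCommZ$ this is immediate: $\varphi$ fixes every $s_j$ and merely relabels the $t_i$, and the index $-i$ ranges over $\Z/e$ as $i$ does, so the images are again instances of (the reversed forms of) these relations. The substantive point is $\tCircle$: applying $\varphi$ to $t_i t_{i-1} = t_j t_{j-1}$ yields $t_{-i}\,t_{-i+1} = t_{-j}\,t_{-j+1}$, and setting $k = 1-i$ and $l = 1-j$ this is exactly the reversed circle relation $t_{k-1}t_k = t_{l-1}t_l$ of $\rev(\BBo(e,e,r))$. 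Hence every relation is preserved and $\varphi$ is a homomorphism.

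To upgrade this to an isomorphism, I would observe that reversing twice recovers the original presentation, $\rev(\rev(\PPo(e,e,r))) = \PPo(e,e,r)$, so the identical formula $t_i \mapsto t_{-i}$, $s_j \mapsto s_j$ defines, by the very same verification, a homomorphism $\varphi'\colon \rev(\BBo(e,e,r)) \to \BBo(e,e,r)$. Since $-(-i)=i$, both composites $\varphi'\circ\varphi$ and $\varphi\circ\varphi'$ fix every generator and are therefore the respective identity maps; thus $\varphi$ is an isomorphism.

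The only step demanding care is the bookkeeping for $\tCircle$, as it is the unique non-palindromic family: one must track the index shift $i \mapsto 1-i$ correctly so that the image of a forward circle relation lands precisely on a reversed one. Everything else is routine, since $\varphi$ fixes the $s$-generators and the remaining relations are invariant under reversal.
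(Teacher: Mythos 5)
Your proposal is correct and follows essentially the same route as the paper: both verify that $\varphi$ respects the defining relations, with the only non-trivial check being the circle relation $\tCircle$, where your computation $\varphi(t_i t_{i-1}) = t_{-i}t_{1-i}$ matching the reversed relation under the substitution $k=1-i$ is exactly the paper's displayed calculation. The only (harmless) variation is at the end: where the paper argues surjectivity and injectivity directly from $\varphi$ permuting the generators and being bijective on relations, you instead exhibit the two-sided inverse via $\rev(\rev(\PPo(e,e,r)))=\PPo(e,e,r)$, which is if anything slightly more explicit.
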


\begin{proof}The map~$\varphi$ permutes the generators~$\TE\cup\SS$, and is bijective between the relations~of~$\PPo(e,e,r)$ and those of~$\rev(\PPo(e,e,r))$: the latter is clear for all types of relation { possibly }except~$(R_5)$, and in this case we find:
$$\varphi(t_i t_{i-1}) = t_{-i} t_{1-i} =_{\tiny \rev(\RR)} t_{-j} t_{1-j} = \varphi(t_j t_{j-1}).$$
So $\varphi$ is a well-defined monoid homomorphism, which is both surjective (as it permutes the generators) and injective (as it is bijective on the relations). Hence it is an isomorphism of monoids.
\end{proof}

Thus `mirror flipping' the diagram corresponds to a group isomorphism but not an equality.  Unlike for braid groups of Coxeter groups, this diagram morphism does not give rise to an automorphism of~$\BB(e,e,r)$.

\subsection{ Natural maps between different types}
\label{ss:maps}

Parabolic subgroups of (braid groups of) Coxeter groups may be realized by considering subdiagrams of the corresponding diagrams. We describe here parabolics of type~$(e,e,r)$ and the corresponding subdiagrams of the diagram shown on~Figure~\ref{BeerDiagram}, as well as maps which arise by taking diagram quotients instead. 

\subsubsection{ Maps related to parabolic subdiagrams}
\label{sss:parabolic}
{ Following~\cite{bmr}, for a given diagram, consider the equivalence relation on nodes defined by~$s\sim s$, and for~$s\not=t$
\[s\sim t\Leftrightarrow s \hbox{ and } t \hbox{ are not in a homogeneous relation with support }\{s,t\}.\]
Thus, for the diagram of Figure~\ref{BeerDiagram}, the equivalence classes have 1 or $e$ elements, and there is at most one class with~$e$ elements.

An \emph{admissible subdiagram} is a full subdiagram of the same type, that is, with 1 or~$e$ elements per class.

An admissible subdiagram of a diagram of type~$(e,e,r)$ must be of the form the union of a diagram of type~$(e_0,e_0,r_0)$ along with $k$ diagrams of type~$(1,1,r_i)$ where~$e_0\in\{0,1,e\}$ and~$\sum_{i=0}^kr_i\leq r$.}

Particular examples are considered below, which show the relationship with braid groups of some real reflection groups.

\begin{list}{$\bullet$}{\leftmargin=18pt}
\item $\PPo(e,e,r')$ with~$r' \leq r$: the case of `chopping off the tail' of the parachute. This corresponds to reducing the dimension from~$r$ to~$r'$. A special case of this is~$\PPo(e,e,2)$, where the whole tail is chopped off, leaving only the circle: this is a presentation of the dual braid monoid~$\BBd(I_2(e))$ (see Remark~\ref{R:dihedral} on page~\pageref{R:dihedral}).
\item $\PPo(0,0,r)$ is a presentation of the classical braid monoid~$\BBc(A_{r-2})$.
\item $\PPo(1,1,r)$ is a presentation of the classical braid monoid~$\BBc(A_{r-1})$.
\item $\PPo(2,2,r)$ is a presentation of the classical braid monoid~$\BBc(D_r)$. 
\end{list}

These sub-presentations will be used in Subsection~\ref{sss:completeness} in the context of cube condition calculations.

\subsubsection{ Maps related to foldings (diagram quotients)}
\label{sss:folding}

\begin{enumerate}
\item Epimorphism~$\BB(e_2, e_2, r) \twoheadrightarrow\BB(e_1, e_1, r)$ for~$e_1$ dividing~$e_2$.

\noindent 
The map induced by~$t_j \mapsto t_{j \mod {e_1}}$ and~$s_j \mapsto s_j$ defines an epimorphism
$\nu:\BB(e_{2},e_{2},r) \twoheadrightarrow\BB(e_1,e_1,r)$. There is an analogous map between the corresponding monoids and reflection groups.
This corresponds to a folding of the `parachute' part of the diagram.

\item Type~$B$ embedding: $\BB(2,1,r-1) \hookrightarrow\BB(e,e,r)$.

\noindent 
The type~$(2,1,r-1)$ corresponds to the Artin-Tits/Coxeter type~$B_{r-1}$.
The associated Coxeter diagram is:
\begin{center}
\begin{pspicture}(0,.8)(7,1.7)
%
\qline(0,.94)(1.6,.94)
\qline(0,1.06)(1.6,1.06)
\qline(1.6,1)(3.9,1)
\psset{linestyle=dashed, dash=1pt 3pt, linecolor=black}
\qline(3.9,1)(4.7,1)
\psset{linestyle=solid}
\qline(4.7,1)(7,1)
\pscircle*[linecolor=white](0,1){.18}
\pscircle(0,1){.18}
\uput{.3}[90](0,1){$q_1$}
\pscircle*[linecolor=white](1.6,1){.18}
\pscircle(1.6,1){.18}
\uput{.3}[90](1.6,1){$q_2$}
\pscircle*[linecolor=white](3.2,1){.18}
\pscircle(3.2,1){.18}
\uput{.3}[90](3.2,1){$q_3$}
\pscircle*[linecolor=white](5.4,1){.18}
\pscircle(5.4,1){.18}
\uput{.3}[90](5.4,1){$q_{r-2}$}
\pscircle*[linecolor=white](7,1){.18}
\pscircle(7,1){.18}
\uput{.3}[90](7,1){$q_{r-1}$}
\end{pspicture}
\end{center}

\noindent
(the double bar between nodes labelled~$q_1$ and~$q_2$ is equivalent to an edge labelled 4).

Whether by an easy adaptation of~\cite[Lemma~1.2 \&~Theorem~1.3]{crisp} or a direct application of~\cite[Proposition 5.4]{dehornoy:embedding}, several embedding criteria can be applied successfully within the current framework. We obtain that the map induced by~$q_1 \mapsto t_i t_{i-1}$ and~$q_j \mapsto s_{j+1}$ for~$j>1$ gives rise to an injection~$\BBc(B_{r-1})\hookrightarrow\BBo(e,e,r)$, hence an injection~$\BB(B_{r-1}) \hookrightarrow\BB(e,e,r)$. This embedding will be used in Subsection~\ref{sss:GarsideElement}. 
\end{enumerate}

\subsection{ The new presentation is~$\BB(e,e,r)$}
\label{ss:isBeerPres}

Our aim in this subsection is to prove the theorem announced in the opening subsection:

\textbf{Theorem~\ref{beerpresthm}.}
\emph{
The presentation~$\PPo(e,e,r)$ is a group presentation for the braid group~$\BB(e,e,r)$.
Furthermore, adding the relations~$a^2=1$ for all generators~$a$ gives a presentation of 
the 
reflection group~$\GG(e,e,r)$. In particular, the generators of this presentation
are all reflections.}

{ We will use the presentation of~\cite{bmr} as our starting point, given on page~\pageref{BMRpres}. 
To this presentation we will add generators~$t_i$ for $2\leq i\leq e$  corresponding to conjugates of~$t_0$ and~$t_1$ which may be defined inductively by:
$$t_i = t_{i-1} t_{i-2} t_{i-1}^{-1} \quad \mbox{ for } i \geq 2.$$
We then verify that the new relations given are both necessary and sufficient. To do this, we introduce the notion of a \emph{circle of elements in a group}, as~$\TE = \{t_i \mid i \in \Z/e\}$ turns out to be the \emph{circle on~$(t_1,t_0)$} in~$\BB(e,e,r)$. }

\subsubsection{ Circles of elements in a group}
\label{sss:circles}

Let~$G$ be a group and~$g_1, g_0$ elements of~$G$. Define elements~$g_i$ for~$i \in \Z$ inductively by:
$$g_i = \left\{ 
\begin{array}{ll}
g_{i-1}^{\phantom{-1}} g_{i-2}^{\phantom{-1}} g_{i-1}^{-1} & \mbox { if } i >1, \mbox{ and } \\
g_{i+1}^{-1}g_{i+2}^{\phantom{-1}} g_{i+1}^{\phantom{-1}}  & \mbox { if } i <0.
\end{array} \right.$$
Then for all~$i, j \in \Z$, the relation
$$g_i g_{i-1} = g_j g_{j-1}$$
is satisfied. The element thus represented is~$g_1 g_0$; denote it by~$\gamma$, and call it the \emph{disk element}.
We call the set~$\{g_i | i \in \Z \}$ the \emph{circle of elements on~$(g_1,g_0)$}, and denote it~$C(g_1,g_0)$. Observe that for any~$i \in Z$, 
$$g_i  = \gamma g_{i-1}^{-1} = g_{i+1}^{-1} \gamma.$$

Conversely, suppose that a group has a set  of elements~$K=\{h_i | i \in \Z \}$ (possibly with 
doubling up, that is, with~$h_i = h_j$ for distinct~$i$ and~$j$)
such  that
$h_i h_{i-1} = h_j h_{j-1}$ for all~$i,j \in \Z$. Then $K$ is~$C(h_p,h_{p-1})$ for any~$p\in\Z$. 

From now on, suppose that $C(g_1,g_0)$ is a circle with disk element~$\gamma$.

\begin{lemma}
\label{tauUpsi}
We have~$\gamma g_i = g_{i+2} \gamma$ for all~$i\in\Z$.
\end{lemma}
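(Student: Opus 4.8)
The plan is to avoid any induction and instead read the result straight off the two expressions for~$\gamma$ recorded just before the statement: for every~$i\in\Z$ one has both $g_i=\gamma g_{i-1}^{-1}$ and $g_i=g_{i+1}^{-1}\gamma$, each of which follows immediately from $g_i g_{i-1}=\gamma$. The conceptual point is that a factor~$\gamma$ can be \emph{absorbed} into an adjacent generator, raising or lowering its index by one; the claim is exactly the net effect of absorbing the~$\gamma$ sitting on the right into~$g_{i+2}$ and re-emitting it on the left of~$g_i$.

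Concretely, I would start from the right-hand side $g_{i+2}\gamma$ and rewrite its leading generator using the first identity applied with index~$i+2$, namely $g_{i+2}=\gamma g_{i+1}^{-1}$, to get $g_{i+2}\gamma=\gamma\,g_{i+1}^{-1}\gamma$. I would then recognise the trailing block $g_{i+1}^{-1}\gamma$ as~$g_i$ by the second identity (with index~$i$), which collapses the expression to $\gamma g_i$. Reading the chain backwards yields precisely $\gamma g_i=g_{i+2}\gamma$, and since every step is uniform in~$i$ the identity holds for all $i\in\Z$.

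There is essentially no obstacle here: the only genuine choice is to apply the $\gamma g_{i-1}^{-1}$ form to the $g_{i+2}$ factor and the $g_{i+1}^{-1}\gamma$ form to the remaining block, any other pairing either loops back on itself or fails to telescope. If one preferred a self-contained argument rather than quoting the two absorption identities, the same two lines can be produced inline from $g_{i+2}g_{i+1}=\gamma=g_{i+1}g_i$, which gives $g_{i+2}=\gamma g_{i+1}^{-1}$ and $g_{i+1}^{-1}\gamma=g_i$ directly; substituting these into $g_{i+2}\gamma$ reproduces the computation without reference to the preceding observation.
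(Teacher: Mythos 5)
Your proof is correct and is essentially the paper's argument in disguise: both rest solely on the two instances $g_{i+2}g_{i+1}=\gamma=g_{i+1}g_i$, which the paper chains directly as $\gamma g_i = g_{i+2}g_{i+1}g_i = g_{i+2}\gamma$ without ever introducing inverses. Your absorption identities $g_{i+2}=\gamma g_{i+1}^{-1}$ and $g_i=g_{i+1}^{-1}\gamma$ are just that same one-line computation rearranged, so there is nothing to add.
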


\begin{proof}For all~$i \in \Z$, we have $\gamma g_i = g_{i+2} g_{i+1} g_i= g_{i+2} \gamma$.
\end{proof}

In general, the circle of elements obtained may be infinite: for example, in the rank~two free group generated by~$\{g_1, g_0\}$, the circle~$C(g_1,g_0)$ is infinite. Obviously, if the group is finite, then 
any circle of elements is finite. 

\begin{lemma}
\label{closedcircle}
If there exist~$p\in \Z$ and~$e \in \N$ satisfying~$g_p = g_{p+e}$, then we have~$g_i = g_{i+e}$ for all~$i\in \Z$, and~$| C(g_1,g_0)|$ divides~$e$.
\end{lemma}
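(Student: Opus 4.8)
The plan is to exploit the recursive definition of the circle elements together with Lemma~\ref{tauUpsi}, which gives the clean commutation relation $\gamma g_i = g_{i+2}\gamma$. The hypothesis is that $g_p = g_{p+e}$ for some $p\in\Z$ and $e\in\N$; the goal is to propagate this single coincidence to every index, i.e. $g_i = g_{i+e}$ for all $i\in\Z$, and then to deduce the divisibility statement about $|C(g_1,g_0)|$. The whole argument is really about showing that the ``shift by $e$'' operation $g_i \mapsto g_{i+e}$ is determined by how it acts at one point, because the $g_i$ are rigidly linked through $\gamma$.

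First I would establish the step $g_p = g_{p+e} \Rightarrow g_{p+2} = g_{p+2+e}$. By Lemma~\ref{tauUpsi} we have $g_{p+2} = \gamma g_p \gamma^{-1}$ and likewise $g_{p+2+e} = \gamma g_{p+e}\gamma^{-1}$; since $g_p = g_{p+e}$, conjugating by $\gamma$ preserves the equality, so $g_{p+2} = g_{p+2+e}$. Iterating gives $g_{p+2k} = g_{p+2k+e}$ for all $k\ge 0$, and applying $\gamma^{-1}$-conjugation (the inverse direction of the same lemma) gives it for all $k\in\Z$. This already covers all indices congruent to $p$ modulo~$2$. To reach the indices of the opposite parity, I would use the defining recursion $g_i g_{i-1} = \gamma$ for every $i$: from $g_{p+1} = \gamma g_p^{-1}$ (using $g_{p+1}^{\phantom{-1}} g_p^{\phantom{-1}} = \gamma$, equivalently the observation $g_i = g_{i+1}^{-1}\gamma$ recorded just before Lemma~\ref{tauUpsi}), the equality $g_p = g_{p+e}$ forces $g_{p+1} = \gamma g_p^{-1} = \gamma g_{p+e}^{-1} = g_{p+1+e}$. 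Thus the coincidence at index $p$ immediately yields it at index $p+1$ as well, and then the even-shift argument above, started from $p+1$, fills in the other parity class. Combining both gives $g_i = g_{i+e}$ for all $i\in\Z$.

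For the divisibility claim, observe that the map $i \mapsto g_i$ now factors through $\Z/e\Z$, so $C(g_1,g_0)$ has at most $e$ distinct elements, hence is finite. Let $d$ be the least positive period, i.e. the smallest $d>0$ with $g_i = g_{i+d}$ for all $i$ (such a $d$ exists and equals $|C(g_1,g_0)|$ by minimality, since the orbit structure under the shift is a single cycle of length $d$). A standard minimal-period argument shows that every period of the sequence is a multiple of $d$: if $e$ were not a multiple of $d$, then $\gcd(d,e)$ would be a strictly smaller positive period, contradicting minimality. Hence $d \mid e$, that is, $|C(g_1,g_0)|$ divides $e$.

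The main obstacle I anticipate is purely organizational rather than deep: carefully handling the parity split in the propagation step, since Lemma~\ref{tauUpsi} only shifts indices by~$2$. The clean resolution is the remark $g_i = g_{i+1}^{-1}\gamma$, which bridges adjacent indices and lets the coincidence jump between the two parity classes; once that bridge is in place the rest is a routine induction, and the final divisibility is the classical fact that the set of periods of a periodic sequence is exactly the set of multiples of its minimal period.
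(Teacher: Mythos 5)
Your proof is correct, but it is organized differently from the paper's, and the comparison is instructive. The paper's proof is a single two-direction induction using only the identities $g_i = \gamma g_{i-1}^{-1} = g_{i+1}^{-1}\gamma$ recorded just before Lemma~\ref{tauUpsi}: from $g_q = g_{q+e}$ it deduces $g_{q+1} = \gamma g_q^{-1} = \gamma g_{q+e}^{-1} = g_{q+e+1}$ going up, and $g_{q-1} = g_q^{-1}\gamma = g_{q+e}^{-1}\gamma = g_{q+e-1}$ going down. Your ``parity bridge'' $g_{p+1} = \gamma g_p^{-1}$ is exactly this induction step, so your main propagation engine---conjugation by $\gamma$ via Lemma~\ref{tauUpsi}, which shifts indices by $2$---is logically redundant: iterating the bridge alone in both directions already yields $g_i = g_{i+e}$ for all $i\in\Z$, which is the entire paper proof; the conjugation route is not wrong, but it buys nothing and costs the parity bookkeeping you yourself identify as the main nuisance. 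On the divisibility claim, however, your proposal is more complete than the paper, which simply asserts that $|C(g_1,g_0)|$ divides $e$: your observation that the periods of the bi-infinite sequence $(g_i)_{i\in\Z}$ are closed under integer combinations, hence form $d\Z$ for the minimal period $d$, is the right argument. One point deserves to be made explicit rather than gestured at by the phrase ``single cycle of length $d$'': to identify $d$ with $|C(g_1,g_0)|$ you must rule out $g_i = g_j$ for $0 \le i < j < d$, and this follows precisely from the propagation result you just proved, since any single coincidence $g_i = g_j$ makes $j-i$ a period, contradicting minimality of $d$. Without that line, $d$-periodicity only bounds $|C(g_1,g_0)|$ from above by $d$, and the divisibility conclusion would not follow.
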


\begin{proof} 
The proof goes by induction in two directions. Suppose first $q \geq p$ and~$g_j = g_{j+e}$ for all~$j$ with~$p \leq j \leq q$. Then we have
$g_{q+1}^{\phantom{-1}} = \gamma g_q^{-1} = \gamma g_{q+e}^{{-1}}   = g_{q+e+1}^{\phantom{-1}}$,
so the result is true for all~$j \geq p$. Similarly, for~$q \leq p$, $g_j = g_{j+e}$ for all~$j \geq q$ implies
$g_{q-1}^{\phantom{-1}} = g_q^{-1} \gamma = g_{q+e}^{-1} \gamma  = g_{q+e-1}^{\phantom{-1}}.$

Thus $C(g_1,g_0)$ is~$\{ g_i^{\phantom{1}}  | \, i \in \Z/e\}$ and is of cardinality dividing~$e$.
\end{proof}


\begin{lemma}
\label{CoxeterRelationforClosedCircle}
The circle~$C(g_1,g_0)$ is
of finite cardinality  if and only if $\<g_1 g_0\>^e =\<g_0 g_1\>^e$ holds for some~$e \in \N$. 
The smallest~$e$ for which  this relation holds is the cardinality of~$C(g_1,g_0)$.
\end{lemma}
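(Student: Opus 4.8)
The plan is to funnel both halves of the statement through a single closed-form expression for the circle elements in terms of alternating products, and then to let Lemma~\ref{closedcircle} do the rest. Writing $w_n=\<g_1 g_0\>^n$ for the alternating word of length~$n$ beginning with~$g_1$ (so $w_0=1$, $w_1=g_1$, $w_2=g_1g_0$, \ldots), the first step is to establish
$$g_n = w_n\,w_{n-1}^{-1}\qquad\text{for all }n\geq 1.$$
The cases $n=1,2$ are immediate since $g_1=w_1w_0^{-1}$ and $g_2=g_1g_0g_1^{-1}=w_2w_1^{-1}$. For the inductive step I substitute $g_{n-1}=w_{n-1}w_{n-2}^{-1}$ and $g_{n-2}=w_{n-2}w_{n-3}^{-1}$ into the defining recursion $g_n=g_{n-1}g_{n-2}g_{n-1}^{-1}$; after the obvious cancellation this reduces the claim to the word identity $w_{n-3}^{-1}w_{n-2}=w_{n-1}^{-1}w_n$. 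Both sides are single letters, namely the $(n-2)$-th and the $n$-th letters of the alternating word, which coincide because they have the same parity.

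With the closed form secured, the decisive observation is that $g_e=g_0$ is equivalent to $w_e=g_0\,w_{e-1}$, and prepending $g_0$ to an alternating word of length $e-1$ starting with~$g_1$ produces exactly the alternating word of length~$e$ starting with~$g_0$. Hence
$$g_e=g_0 \iff \<g_1 g_0\>^e=\<g_0 g_1\>^e.$$
This single equivalence is the hinge of the whole argument.

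Both implications now follow quickly. If the Coxeter-type relation holds for some $e\in\N$, then $g_0=g_e$, so Lemma~\ref{closedcircle} (applied with $p=0$) makes $C(g_1,g_0)$ finite of cardinality dividing~$e$. Conversely, if the circle is finite then two of the $g_i$ must coincide, say $g_i=g_{i+e}$ with $e>0$; Lemma~\ref{closedcircle} then gives the full periodicity $g_j=g_{j+e}$ for all $j$, in particular $g_0=g_e$, and the equivalence yields the relation. For the sharp statement about the \emph{smallest} such~$e$, let $T$ be the least period of the sequence $(g_i)$. By the displayed equivalence the set of $e\in\N$ realizing the relation equals $\{e:g_e=g_0\}$; Lemma~\ref{closedcircle} shows every member of this set is a multiple of~$T$, while $e=T$ itself belongs to it, so the least such~$e$ is~$T$. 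Finally the cardinality equals $T$ as well: the circle is $\{g_0,\ldots,g_{T-1}\}$, and these are pairwise distinct, since a coincidence $g_i=g_j$ with $0\le i<j<T$ would force the smaller period $j-i$ via Lemma~\ref{closedcircle}, contradicting the minimality of~$T$.

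The only genuine work here is the closed form $g_n=w_nw_{n-1}^{-1}$ together with the parity bookkeeping hidden in the alternating words; I expect that verification to be the main obstacle, since once it is in place the equivalence $g_e=g_0\iff\<g_1 g_0\>^e=\<g_0 g_1\>^e$ is immediate and every remaining assertion is a direct appeal to Lemma~\ref{closedcircle}.
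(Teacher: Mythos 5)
Your proposal is correct, but it organizes the argument differently from the paper, and the difference is worth noting. The closed form $g_n=\<g_1g_0\>^n\bigl(\<g_1g_0\>^{n-1}\bigr)^{-1}$ is precisely the identity the paper also proves by induction (their telescoping computation from $g_{q+1}=g_qg_{q-1}g_q^{-1}$ is your single-letter parity argument in expanded form), but the paper uses it \emph{only} for the ``if'' direction: assuming $\<g_1g_0\>^e=\<g_0g_1\>^e$ it deduces $g_e=g_0$ and invokes Lemma~\ref{closedcircle}. For the ``only if'' direction the paper switches tools entirely: it rewrites the alternating products as powers of the disk element~$\gamma$, applies the shift identity $\gamma g_i=g_{i+2}\gamma$ of Lemma~\ref{tauUpsi}, and splits into cases according to the parity of~$e$. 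You instead extract from the closed form the unconditional equivalence $g_e=g_0\iff\<g_1g_0\>^e=\<g_0g_1\>^e$ (prepending $g_0$ to $\<g_1g_0\>^{e-1}$ gives $\<g_0g_1\>^e$), after which both implications reduce to pigeonhole plus Lemma~\ref{closedcircle}. This buys uniformity --- no parity case split and no recourse to Lemma~\ref{tauUpsi} --- and your treatment of the minimality claim (least period $T$ equals the cardinality, via pairwise distinctness of $g_0,\dots,g_{T-1}$) makes explicit a synthesis the paper leaves implicit.

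One presentational nit: you assert that ``Lemma~\ref{closedcircle} shows every member of $\{e: g_e=g_0\}$ is a multiple of~$T$'' before you have identified $|C(g_1,g_0)|$ with~$T$. Lemma~\ref{closedcircle} literally gives that $|C(g_1,g_0)|$ divides~$e$, so the divisibility by~$T$ should be stated \emph{after} your closing distinctness argument, which uses only the minimality of~$T$ and is independent of the multiples claim. Reordering those two sentences repairs this with no new content.
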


\begin{proof} 
Suppose first~$|C(g_1,g_0)| = e < \infty$. So $g_i = g_{i+e}$ holds for all~$i \in \Z/e$.
Using Lemma~\ref{tauUpsi} above, we find, for~$e$ odd,
$$\<g_1 g_0\>^e = \gamma^{\frac{e-1}{2}} g_1 = 
g_{1+ e-1} \gamma^{\frac{e-1}{2}}  = g_0 \<g_1 g_0\>^{\frac{e-1}{2}}
= \<g_0 g_1\>^e,$$
and, for~$e$ even, 
$$ \<g_0 g_1\>^e = g_0 \gamma^{\frac{e-2}{2}} g_1 = 
g_0 g_{1+e-2}\gamma^{\frac{e-2}{2}} = 
g_0 g_{-1}\gamma^{\frac{e-2}{2}} = \gamma^{\frac{e}{2}} =\<g_1 g_0\>^e.$$

Now assume that~$g_1,g_0$ are elements satisfying~$\<g_1 g_0\>^e = \<g_0 g_1\>^e.$ We show
$$g_q = \<g_1 g_0 \>^q \big( \<g_1 g_0\>^{q-1} \big)^{-1}$$
by induction on~$q>1$. It is certainly true for~$q=1$ and~$q=2$. Suppose~$q \geq 2$. Then we obtain:
$$\begin{array}{rcl}
g_{q+1}^{\phantom{1}}&=&g_q^{\phantom{1}}  g_{q-1}^{\phantom{1}}  g_q^{-1}\\
&= &\<g_1 g_0 \>^q \big( \<g_1 g_0\>^{q-1} \big)^{-1} 
\<g_1 g_0\>^{q-1}  \big( \<g_1 g_0\>^{q-2} \big)^{-1}
 \<g_1 g_0\>^{q-1} \big( \<g_1 g_0\>^{q}  \big)^{-1} \\
&= &\<g_1 g_0 \>^q \big( \<g_1 g_0\>^{q-2} \big)^{-1}
 \<g_1 g_0\>^{q-1} \big( \<g_1 g_0\>^{q}  \big)^{-1} \\
&= &\<g_1 g_0 \>^2 
 \<g_1 g_0\>^{q-1} \big( \<g_1 g_0\>^{q}  \big)^{-1} \ = \ 
 \<g_1 g_0\>^{q+1} \big( \<g_1 g_0\>^{q}  \big)^{-1},
 \end{array}
$$
which concludes the induction. In particular, we find$$g_e = \<g_1 g_0 \>^e \big( \<g_1 g_0\>^{e-1} \big)^{-1} = \<g_0 g_1 \>^e \big( \<g_1 g_0\>^{e-1} \big)^{-1} = g_0,$$ so 
 by Lemma~\ref{closedcircle}, $g_q = g_{q+e}$ holds for all~$q \in \Z$ and~$|C(g_1,g_0)|$ divides~$e$.
\end{proof}

\begin{remark}\label{R:dihedral}
\emph{
The braid group $\BB(I_2(e))$, with reflection group the dihedral group of order $2e$, may be presented by~$\big\<a,b \mid \<ab\>^e = \<ba\>^e \big\>$.
This presentation gives rise to a Garside structure (see Subsection~\ref{ss:basicsGarside} for details about
Garside structures; this fact was proved in~\cite{brsa,del}) corresponding to the classical braid monoid~$\BBc(I_2(e))$.
Lemma~\ref{CoxeterRelationforClosedCircle} implies the known fact that~$\BB(I_2(e))$
also has the presentation
$\big\<a_i, i \in \Z/e \mid a_i a_{i-1} = a_j a_{j-1} \mbox{ for all } i,j \in \Z/e \big\>$, which gives rise to an alternative Garside structure, corresponding to the dual braid monoid~$\BBd(I_2(e))$.
}
\end{remark}

\begin{lemma}
\label{commuters}
Every element~$b$ satisfying~$b g_i = g_i b$ for~$i\in\{0,1\}$ satisfies $b g_i = g_{i} b$ for all~$i \in \Z$.
\end{lemma}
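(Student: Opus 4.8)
The plan is to reduce everything to commutation with the single disk element $\gamma = g_1 g_0$, and then to propagate the commutation around the circle by a one-step induction in each direction.

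First I would observe that since $b$ commutes with both $g_0$ and $g_1$, it commutes with their product:
$$b\gamma = b g_1 g_0 = g_1 b g_0 = g_1 g_0 b = \gamma b.$$
This is the only place where the hypothesis on $\{g_0,g_1\}$ is used; everything afterwards is formal.

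Next I would invoke the two identities $g_i = \gamma g_{i-1}^{-1}$ and $g_i = g_{i+1}^{-1}\gamma$ recorded just after the definition of a circle of elements. For the upward direction, suppose $b$ commutes with $g_{i-1}$; then $b$ commutes with $g_{i-1}^{-1}$, and since it also commutes with $\gamma$, it commutes with $g_i = \gamma g_{i-1}^{-1}$. Taking the base case $i-1 = 1$, a straightforward induction yields commutation with $g_i$ for all $i \geq 1$. The downward direction is symmetric: from $g_i = g_{i+1}^{-1}\gamma$, commutation with $g_{i+1}$ forces commutation with $g_i$, and starting from $i+1 = 0$ we obtain commutation with $g_i$ for all $i \leq 0$. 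Together with the hypotheses at $i=0,1$, these cover all $i \in \Z$.

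I expect no real obstacle here; the content is a clean bidirectional induction, and the statement is essentially a formal consequence of the fact that each $g_i$ is expressible in terms of $\gamma$ and a single neighbour. The only point requiring mild care is the bookkeeping of base cases and ensuring each inductive step uses one of the two displayed expressions for $g_i$ (in terms of $\gamma$ and an adjacent element) rather than the original three-term recursion $g_i = g_{i-1} g_{i-2} g_{i-1}^{-1}$. One could instead run the induction directly on that defining recursion and its mirror $g_i = g_{i+1}^{-1} g_{i+2} g_{i+1}$, conjugating through by $b$; this works equally well but requires carrying both $g_{i-1}$ and $g_{i-2}$ (\emph{resp.} $g_{i+1}$ and $g_{i+2}$) as inductive hypotheses, so it is slightly heavier than the route through $\gamma$.
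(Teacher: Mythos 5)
Your proof is correct, but it is heavier than what the paper does. The paper's entire proof is two sentences: every element of the circle $C(g_1,g_0)$ lies in the subgroup generated by $g_1$ and $g_0$ (immediate from the defining recursion $g_i = g_{i-1}g_{i-2}g_{i-1}^{-1}$, \resp its mirror), and the centralizer of $b$ is a subgroup; since it contains $g_0$ and $g_1$, it contains the whole circle. Your bidirectional induction through $\gamma$, using $g_i = \gamma g_{i-1}^{-1}$ and $g_i = g_{i+1}^{-1}\gamma$, is a valid unpacking of the same underlying fact, and it has the minor virtue of making explicit exactly which identities are invoked at each step. But note that your argument, like the paper's, still takes place in the ambient group (you conjugate by $g_{i-1}^{-1}$, i.e.\ you use inverses), so the extra bookkeeping buys no added generality --- and in fact no induction is needed at all once one observes that each $g_i$ is a word in $g_0^{\pm 1}, g_1^{\pm 1}$. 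Your closing remark about the alternative induction on the three-term recursion is accurate but moot for the same reason: the centralizer argument subsumes both routes at once.
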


\begin{proof}Clearly, all the elements of~$C(g_1,g_0)$ lie in the subgroup generated by~$g_1$ and~$g_0$. Thus if there is an element which commutes with~$g_1$ and~$g_0$, then it commutes with the entire circle.
\end{proof}

The last property below describes how certain relations on~$g_1$ and~$g_0$
may be extended to the entire circle~$C(g_1,g_0)$. 

\begin{lemma}
\label{gibraidrels}
Every element~$a$ satisfying
$a g_i^{\phantom{1}}Êa = g_i^{\phantom{1}} a g_i^{\phantom{1}}$ 
for~$i\in\{0,1\}$ satisfies\begin{itemize}
\item[$(a)$] $a g_i a = g_i a g_i$ for all~$i \in \Z$, and 
\item[$(b)$] $a \gamma a \gamma = \gamma a \gamma a$.
\end{itemize}
\end{lemma}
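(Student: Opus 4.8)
The plan is to prove both parts by the same strategy used repeatedly in this section: reduce a statement about the whole circle $C(g_1,g_0)$ to the two ``seed'' indices $i\in\{0,1\}$ by an induction that propagates the hypothesis along the circle, exploiting the recursion $g_{i+1}=g_i g_{i-1} g_i^{-1}$ and its downward analogue, together with the identity $g_i=\gamma g_{i-1}^{-1}=g_{i+1}^{-1}\gamma$ recorded after the definition of the disk element, and Lemma~\ref{tauUpsi}.

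For part~$(a)$, I would argue that the braid relation $a g_i a = g_i a g_i$ is equivalent to the statement that conjugation by $a$, restricted to the subgroup generated by $a$ and $g_i$, satisfies $a g_i a^{-1}=g_i^{-1}ag_i$ — but more usefully, I would show that the set of indices $i$ for which $a g_i a = g_i a g_i$ holds is closed under the circle recursion in both directions. Concretely, suppose the relation holds for $g_{i-1}$ and $g_i$; I want to deduce it for $g_{i+1}=g_i g_{i-1}g_i^{-1}$. This is exactly the assertion that the braid relation with $a$ is preserved under conjugating the second generator by the first, which is a standard fact in braid-type computations: if $a,x,y$ pairwise satisfy the braid relation in a suitable configuration, then $a$ braids with $x y x^{-1}$. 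I expect the cleanest route is a direct word manipulation, substituting $g_{i+1}=g_i g_{i-1}g_i^{-1}$ into $a g_{i+1} a$ and repeatedly applying the two base relations $a g_i a=g_i a g_i$ and $a g_{i-1}a=g_{i-1}a g_{i-1}$ to push the $a$'s through, until the expression matches $g_{i+1} a g_{i+1}$. Running the same computation with the downward recursion $g_{i-1}=g_i^{-1}g_{i+1}g_i$ gives closure in the negative direction, so by induction from the base pair $\{0,1\}$ the relation holds for all $i\in\Z$.

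For part~$(b)$, I would deduce the relation $a\gamma a\gamma=\gamma a\gamma a$ directly from part~$(a)$. The key is to rewrite $\gamma$ using $\gamma=g_{i+1}g_i$ for a convenient choice of $i$ — say $\gamma=g_1 g_0$ — and then expand $a\gamma a\gamma = a g_1 g_0 a g_1 g_0$. Using part~$(a)$ (now available for all indices) to move the $a$'s across the $g_i$'s via the braid relations, I would transform this into $\gamma a\gamma a$. Alternatively, and perhaps more transparently, the relation $a\gamma a\gamma=\gamma a\gamma a$ says precisely that $a$ and $\gamma$ satisfy the type-$4$ braid relation $\<a\gamma\>^4=\<\gamma a\>^4$; this is the same phenomenon as relation~$\dbleBar$ in the presentation, where $s_3$ and the disk element $t_1 t_0$ satisfy such a relation. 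So the cleanest proof is a short chain of rewrites of $a g_1 g_0 a g_1 g_0$, each step either an application of a base braid relation or of the circle relation $g_1 g_0=g_i g_{i-1}$.

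The main obstacle I anticipate is the bookkeeping in the inductive step of part~$(a)$: verifying that $a g_{i+1}a=g_{i+1}ag_{i+1}$ follows from the two relations at $i-1$ and $i$ requires a careful sequence of substitutions, and one must confirm that only the hypothesized relations (and group axioms) are used, with no hidden appeal to commutation or to relations not yet established. I would therefore isolate this as a purely formal lemma about three elements $a, x, y$ satisfying pairwise braid relations, prove it once by explicit word manipulation, and then apply it with $x=g_{i-1}$, $y=g_i$ in both directions. Part~$(b)$ should then be routine given part~$(a)$.
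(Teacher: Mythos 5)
Your treatment of part~$(b)$ is essentially the paper's: once $(a)$ is available, the paper expands $a\gamma a\gamma = ag_1g_0ag_1g_0$, trades $g_1g_0$ for $g_2g_1$ via the circle relation, and pushes the $a$'s through using the braid relations at indices $0,1,2$. That portion of your plan is sound and, as you say, routine given $(a)$.

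The genuine gap is in your inductive step for part~$(a)$. You propose to deduce $ag_{i+1}a=g_{i+1}ag_{i+1}$, with $g_{i+1}=g_ig_{i-1}g_i^{-1}$, from the braid relations at the \emph{two} indices $i-1$ and $i$ alone, and you describe the underlying ``purely formal lemma about three elements $a,x,y$'' (that $a$ braids with $yxy^{-1}$ whenever $a$ braids with $x$ and with $y$) as a standard fact to be verified once by word manipulation. No such formal lemma exists: the implication is false. In the Coxeter group $\langle a,x,y \mid a^2=x^2=y^2=(ax)^3=(ay)^3=1\rangle$, with no relation between $x$ and $y$, both hypotheses hold; but in the standard geometric representation the reflection $yxy^{-1}$ has root $y(e_x)=e_x+2e_y$, and the form evaluates to $B(e_a,e_x+2e_y)=-\tfrac12+2\cdot(-\tfrac12)=-\tfrac32$, so $a\cdot yxy^{-1}$ has infinite order and the braid relation $a(yxy^{-1})a=(yxy^{-1})a(yxy^{-1})$ fails. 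Since this Coxeter group is a quotient of the group presented by exactly your two hypothesized relations, no sequence of rewrites from those relations can succeed. (Your hedge ``pairwise braid relations'' does not help either: a braid relation between $g_{i-1}$ and $g_i$ themselves is simply not available in a circle -- the $g_i$ do not braid with one another.) What makes the paper's computation go through is precisely the input your step discards: it is a strong induction with a \emph{three}-index window. The paper rewrites $g_i=g_{i-1}g_{i-2}g_{i-1}^{-1}$ by the circle relation and then invokes the braid relation at index $i-2$ as well (the step $ag_{i-2}a^{-1}=g_{i-2}^{-1}ag_{i-2}$ in its displayed chain), so that the relations at $i-2$, $i-1$, $i$, together with the circle relations, yield the relation at $i+1$. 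To repair your argument you must carry the whole window $0\le j\le i$ in the induction hypothesis, as the paper does -- and you should note that even then the assembly of a first window of three from the two seeds $g_0,g_1$ is delicate (the paper is terse on this initialization; in the intended application the ambient relation $(P_4)$, i.e.\ the conclusion of part~$(b)$, supplies exactly the missing leverage). Your own caution about ``no hidden appeal to relations not yet established'' identifies the right danger, but the proposal as written then walks straight into it.
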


\begin{proof}(a) The proof is again by induction in two directions.
We prove the case~$i \geq 1$, the case~$i < 0$ is similar.
Using only the relations of the form~$g_i g_{i-1} = g_j g_{j-1}$ and
those of the form~$g_j a g_j = a g_j a$  for~$0 \leq j \leq i$, we have (see Figure~\ref{BraidRelDiagram})
$$\begin{array}{rcl}
g_{i+1} a g_{i+1}^{-1} 
&=& g_i g_{i-1} g_{i}^{-1} a g_i g_{i-1}^{-1} g_{i}^{-1} \\
&=&  g_i g_{i-1} a g_i a^{-1}   g_{i-1}^{-1} g_{i}^{-1} \\
&=&  g_i g_{i-1} a g_{i-1} g_{i-2} g_{i-1}^{-1} a^{-1}   g_{i-1}^{-1} g_{i}^{-1} \\
&=&  g_i a g_{i-1} a  g_{i-2} a^{-1}  g_{i-1}^{-1} a^{-1}  g_{i}^{-1} \\
&=&  g_i a g_{i-1} g_{i-2}^{-1}a  g_{i-2}   g_{i-1}^{-1} a^{-1}  g_{i}^{-1}    \\
&=& g_i a g_{i}^{-1} g_{i-1}a   g_{i-1}^{-1} g_{i}  a^{-1}  g_{i}^{-1}    \\
&=& a^{-1} g_i a a^{-1} g_{i-1} a   a^{-1}  g_{i}^{-1}  a \qquad 
= \quad a^{-1} g_{i+1} a.
\end{array}
$$

\begin{figure}[h]
\vskip85mm
\hskip-10cm
\psset{unit=3.5mm}
\uput{0.35}[30](1,10){$\bullet$}
\psline{->}(1,10)(5,10)
\uput{.3}[90](3,10){$g_i$}
\psline{->}(5,10)(10,19)
\uput{.2}[0](7,14){$g_{i-1}$}
\psline{->}(1,10)(9,23)
\uput{.5}[180](5,16){$g_{i+1}$}
\psline{->}(9,23)(10,19)
\uput{.3}[0](9.5,21){$g_{i}$}
\uput{0.35}[315](9,23){$\bullet$}
\psline{->}(9,23)(21,23)
\uput{.3}[90](15,23){$a$}
\psline{->}(10,19)(13,19)
\uput{.3}[90](11.5,19){$a$}
\psline{->}(13,19)(17,19)
\uput{.3}[90](15,19){$g_{i}$}
\psline{->}(20,19)(17,19)
\uput{.3}[90](18.5,19){$a$}
\psline{->}(21,23)(20,19)
\uput{.3}[180](20.5,21){$g_i$}
\uput{0.7}[278](21,23){$\bullet$}
\psline{->}(21,23)(29,10)
\uput{.5}[0](25,16){$g_{i+1}$}
\psline{->}(29,10)(25,10)
\uput{.3}[90](27,10){$g_{i}$}
\psline{->}(20,19)(25,10)
\uput{0}[180](23,14){$g_{i-1}$}
\uput{0.5}[335](1,10){$\bullet$}
\psline{->}(1,10)(11,1)
\uput{.3}[180](6,5){$a$}
\psline{->}(11,1)(13,4)
\uput{.3}[0](12,2.5){$g_{i}$}
\psline{->}(13,4)(11,7)
\uput{.3}[180](12,5.5){$a$}
\psline{->}(5,10)(9,10)
\uput{.3}[90](7,10){$a$}
\psline{->}(9,10)(11,7)
\uput{.3}[180](10,8.5){$g_i$}
\uput{0.4}[30](11,1){$\bullet$}
\psline{->}(11,1)(19,1)
\uput{.3}[270](15,1){$g_{i+1}$}
\psline{->}(19,1)(17,4)
\uput{.3}[180](18,2.5){$g_{i}$}
\psline{->}(13,4)(17,4)
\uput{.3}[90](15,4){$g_{i-1}$}
\uput{0.4}[85](19,1){$\bullet$}
\psline{->}(19,1)(29,10)
\uput{.3}[270](24,5.5){$a$}
\psline{->}(25,10)(21,10)
\uput{.3}[90](23,10){$a$}
\psline{->}(17,4)(19,7)
\uput{.3}[0](18,5.5){$a$}
\psline{->}(19,7)(21,10)
\uput{.3}[0](20,8.5){$g_{i}$}
\uput{0.4}[35](5,10){$\bullet$}
\psline{->}(9,10)(11,13)
\uput{.3}[180](10,11.5){$g_{i-1}$}
\psline{->}(11,13)(13,16)
\uput{.3}[180](12,14.5){$a$}
\psline{->}(13,19)(13,16)
\uput{.1}[180](13,17.5){$g_{i-1}$}
\uput{0.2}[315](13,19){$\bullet$}
\psline{->}(17,19)(17,16)
\uput{.2}[0](17,17.5){$g_{i-1}$}
\psline{->}(13,16)(17,16)
\uput{.2}[270](15,16){$g_{i-2}$}
\uput{0.2}[250](20,19){$\bullet$}
\psline{->}(17,16)(19,13)
\uput{.2}[0](18,14.5){$a$}
\psline{->}(21,10)(19,13)
\uput{.2}[0](20,11.5){$g_{i-1}$}
\uput{0.2}[0](9,10){$\bullet$}
\psline{->}(11,13)(13,10)
\uput{.2}[0](12,11.5){$g_{i-2}$}
\psline{->}(11,7)(13,10)
\uput{.2}[0](12,8.5){$g_{i-1}$}
\uput{0.2}[0](11,13){$\bullet$}
\psline{->}(13,10)(17,10)
\uput{.2}[90](15,10){$a$}
\psline{->}(17,10)(19,13)
\uput{.2}[180](18,11.5){$g_{i-2}$}
\uput{0.2}[60](13,4){$\bullet$}
\psline{->}(19,7)(17,10)
\uput{.2}[180](18,8.5){$g_{i-1}$}
\uput{0.4}[90](19,7){$\bullet$}
\caption{Proof of~$g_{i+1} a g_{i+1} = a g_{i+1} a$. The word~$g_{i+1} a g_{i+1}$ can be read around the top, the word~$a g_{i+1} a$ around the bottom. The interior cells are bounded by words corresponding to relators in the group. The positive relations can be read in opposite directions, starting from the corner of a given cell with the~$\bullet$ symbol.}
\label{BraidRelDiagram}
\end{figure}

\noindent
(b) By the first part we have~$g_2 a g_2 = a g_2 a$. We find (see Figure~\ref{Length4Relation})
$$\begin{array}[t]{rcl}
a \gamma a \gamma 
&=& ag_1g_0ag_1g_0 
= ag_2g_1ag_1g_0
=  ag_2ag_1ag_0 
=g_2 a g_2 g_1 ag_0 \\
&=&g_2 a g_1 g_0 ag_0
= g_2 g_1a g_1 g_0 a 
= g_1 g_0a g_1 g_0 a
= \gamma a \gamma a. 
\end{array}$$
\end{proof}

\begin{figure}[h]
\vskip70mm
\hskip-8cm
\psset{unit=4mm}
\uput{0.2}[45](3,9){$\bullet$}
\psline{->}(3,9)(6,9)
\uput{.3}[90](4.5,9){$a$}
\psline{->}(6,9)(9,12)
\uput{.2}[135](7.5,10.5){$g_1$}
\psline{->}(9,12)(9,15)
\uput{.2}[180](9,13.5){$a$}
\psline{->}(3,9)(3,12)
\uput{.3}[0](3,10.5){$g_1$}
\psline{->}(3,12)(6,15)
\uput{.3}[135](4.5,13.5){$a$}
\psline{->}(6,15)(9,15)
\uput{.3}[270](7.5,15){$g_1$}
\uput{0.2}[30](9,12){$\bullet$}
\psline{->}(9,12)(12,9)
\uput{.3}[45](10.5,10.5){$g_0$}
\psline{->}(12,9)(15,9)
\uput{.2}[90](13.5,9){$a$}
\psline{->}(15,9)(15,12)
\uput{.2}[180](15,10.5){$g_0$}
\psline{->}(9,15)(12,15)
\uput{.3}[270](10.5,15){$g_0$}
\psline{->}(12,15)(15,12)
\uput{.3}[45](13.5,13.5){$a$}
\uput{0.2}[45](9,3){$\bullet$}
\psline{->}(9,3)(9,6)
\uput{.2}[0](9,4.5){$a$}
\psline{->}(9,6)(12,9)
\uput{.2}[315](10.5,7.5){$g_1$}
\psline{->}(9,3)(12,3)
\uput{.2}[90](10.5,3){$g_1$}
\psline{->}(12,3)(15,6)
\uput{.2}[315](13.5,4.5){$a$}
\psline{->}(15,6)(15,9)
\uput{.2}[180](15,7.5){$g_1$}
\uput{0.2}[30](3,6){$\bullet$}
\psline{->}(3,6)(3,9)
\uput{.2}[0](3,7.5){$g_2$}
\psline{->}(6,9)(9,6)
\uput{.2}[225](7.5,7.5){$g_2$}
\psline{->}(3,6)(6,3)
\uput{.2}[225](4.5,4.5){$a$}
\psline{->}(6,3)(9,3)
\uput{.2}[90](7.5,3){$g_2$}
\uput{0.2}[0](6,9){$\bullet$}
\pscurve{->}(3,6)(2,9)(3,12)
\uput{.2}[180](2,9){$\gamma$}
\pscurve{->}(6,15)(9,16)(12,15)
\uput{.2}[90](9,16){$\gamma$}
\pscurve{->}(15,6)(16,9)(15,12)
\uput{.2}[0](16,9){$\gamma$}
\pscurve{->}(6,3)(9,2)(12,3)
\uput{.2}[270](9,2){$\gamma$}
\vskip-5mm
\caption{Proof of~$\gamma a \gamma a = a \gamma a \gamma$. The word~$\gamma a \gamma a$ can be read starting at the left around the top, the word~$a \gamma a \gamma$ around the bottom. The interior cells are bounded by relators.}
\label{Length4Relation}
\end{figure}

\subsubsection{ Proof of Theorem~\ref{beerpresthm}}
\label{sss:theorem1}
We now have enough to prove the theorem.

\emph{Proof of Theorem~\ref{beerpresthm}.}
Let~$J$ denote the group presented by~$\PPo(e,e,r)$.
Relation~$(R_5)$ says that~$\TE$ is the circle~$C(t_1,t_0)$.
By Lemma~\ref{CoxeterRelationforClosedCircle}, $(P_1)$ holds in~$J$. By definition, Relation~$(P_2)$ (\resp $(P_3)$) is a particular case of~$(R_3)$ (\resp $(R_4)$). Relation~$(P_4)$ is precisely a case of Lemma~\ref{gibraidrels}(b) with~$a=s_3$.
Thus all the relations of~$\BB(e,e,r)$ hold in~$J$.

On the other hand, $(P_1)$ says that if $\TT= \{t_i | i \in \Z \} = C(t_1,t_0)$ holds in~$\BB(e,e,r)$, then by Lemma~\ref{CoxeterRelationforClosedCircle},
$t_i = t_{i+e}$ holds for all~$i$, which implies~$(R_5)$. Lemma~\ref{gibraidrels}(a) implies that
$(R_3)$ then holds for all~$t_i \in \TT$.  Lemma~\ref{commuters} ensures that~$(R_4)$ holds.
Thus~$J$ and~$\BB(e,e,r)$ are isomorphic.

The new presentation has the same generators as the original, as well as some conjugates
of the originals. Since it is the case for the presentation in~\cite{bmr}, adding the relations~$a^2=1$ for all generators~$a$ in the new presentation gives a presentation of 
the reflection group~$\GG(e,e,r)$. Since conjugates of reflections are reflections, 
the generators of this presentation are all reflections. Denote by~$\overline{\phantom{x}}$ the natural map~$\BB(e,e,r) \twoheadrightarrow\GG(e,e,r)$. 
The generating reflections in the new presentation of~$\GG(e,e,r)$ are the matrices:
$$\overline{t_i} =\left(\begin{array}{c|c}
 \begin{array}{rl}
0 &  \zeta_e^{-i} \\
 \zeta_e^i &  0 \\
\end{array}
& \begin{array}{c} \\ {\LARGE 0} \\ \\ \end{array} \\
\hline 
\\
0  & ~ {\LARGE I_{r-2}} ~ \\
\\
\end{array} \right)
\quad \mbox{ and } \quad
\overline{s_j} = \mbox{ matrix of } (j-1 \ \ \ j),$$
where~$\zeta_e$ is a primitive $e$-th root of unity.
\qed

\section{ A new Garside structure}
\label{s:newGarsideStructure}

In this section, we first give a proof that the monoid~$\BBo(e,e,r)$---defined by the presentation studied in the previous section---is a Garside monoid. Then we find a precise description of the combinatorics of the underlying Garside structure. Finally we produce some arguments in order to convince the reader that this structure could be named post-classical.

\subsection{Background on Garside theory}
\label{ss:basicsGarside}

In this preliminary subsection, we list some basic definitions and summarize results by~Dehornoy \&~Paris about Garside theory. For all the results quoted here, we refer the reader
to~\cite{dfx,dehornoy:embedding,dehornoy:garside,dehornoy:complete}.

{ For~$x,y$ in a monoid~$M$, write~$x \cleq y$ if there exists~$z \in M$ satisfying~$xz = y$, and say either that~$x$ left-divides~$y$ or that~$y$ is a right multiple of~$x$. 

There are similar definitions for right division and left multiplication (with the  notation~$y \cgeq x$ if there exists~$z$ satisfying~$y = zx$).
Write~$x \vee y$ for the right lcm of~$x$ and~$y$, and write~$x \wedge y$ for the left gcd. When~$M$ is cancellative, elements~$(x\backslash y)$ and~$(y\backslash x)$ are uniquely defined by:
$$x\vee y=x(x\backslash y)=y(y\backslash x).$$

A \emph{Garside monoid}~$M$ is a cancellative monoid with lcm's and gcd's and admitting a \emph{Garside element}, namely an element whose left and right divisors coincide, are finite in number\footnote{Finiteness is a quite technical condition which can be relaxed in some contexts.} and generate~$M$. There exists a minimal Garside element---usually denoted by~$\Delta$ and then called \emph{the} Garside element---whose divisors are called \emph{the simples} of~$M$.

By \"Ore's conditions, a Garside monoid embeds in a group of fractions. A \emph{Garside group} is a group that is the group of fractions of (at least) one Garside monoid.

Recognizing a Garside monoid from a presentation and computing in a Garside group given by a presentation are natural questions which can be solved by using \emph{word reversing}, a syntactic method relevant for semigroup presentations.

Let~$\varepsilon$ denote the empty word. For~$\langle~\A\mid\RR~\rangle$ a semigroup presentation and~$w,w'$ words on~$\A\cup\A^{-1}$, we say that \emph{$w$ reverses to~$w'$}---written~$w \revstorelR w'$---if~$w'$ is obtained from~$w$ by (iteratively)\begin{itemize}
\item deleting some~$x^{-1}x$ for~$x\in\A$,
\item replacing some~$x^{-1}y$ with~$uv^{-1}$ for~$xu=yv$ a relation in~$\RR$.
\end{itemize}
This can be represented diagrammatically as shown in Figure~\ref{wrDiag}.

\begin{figure}[h]
\begin{center}
\vskip5mm
\psset{unit=8mm}
\psline{->}(0,0)(0,-2)
\uput{.2}[180](0,-1){$x$}
\psline{->}(0,0)(2,0)
\uput{.2}[90](1,0){$x$}
\psarc(0,0){2}{270}{0}
\uput{0}[0](1.5,-1.5){$\varepsilon$}
\uput{0}[0](0.5,-0.8){$\revsto$}
\hskip5cm
\psline{->}(0,0)(0,-2)
\uput{.2}[180](0,-1){$x$}
\psline{->}(0,0)(2,0)
\uput{.2}[90](1,0){$y$}
\uput{0}[0](0.8,-1){$\revsto$}
\psline{->}(0,-2)(2,-2)
\uput{.2}[270](1,-2){$u$}
\psline{->}(2,0)(2,-2)
\uput{.2}[0](2,-1){$v$}
\vskip18mm
\end{center}
\caption{Word reversing diagrams for~$x^{-1}x \revsto \varepsilon$ and~$x^{-1} y \revsto uv^{-1}$.}
\label{wrDiag}
\end{figure}

First, remark that, for all~$u,v\in\A^*$, $u^{-1}v\revstorelR\varepsilon$ implies~$u\equiv^+_\RR v$, where~$\equiv^+_\RR$ denotes the  monoid congruence
generated by~$\RR$. A semigroup presentation~$\langle~\A\mid\RR~\rangle$ is said to be \emph{complete (for reversing)} when the converse holds, that is, when word reversing detects equivalence.

Technically, $\langle~\A\mid\RR~\rangle$ is complete if and only if every triple~$(u,v,w)$ of words over~$\A$ satisfies the \emph{cube condition~(CC) modulo~$\RR$}:
\begin{center}
\fbox{\parbox{288pt}{
\vspace{1mm}{
$u^{-1} w w^{-1} v \revstorelR u v^{-1}$ 
for~$u,v\in\A^*$ implies $(xu)^{-1} yv \revstorelR \varepsilon$.}
\vspace{-1mm}}}
\end{center}
The CC can be represented diagrammatically as shown in Figure~\ref{ccDiag}.

\begin{figure}[h]
\vskip30mm
\hskip-8cm
\psset{unit=6mm}
\psline{->}(0,2)(0,0)
\uput{.2}[180](0,1){$u$}
\psline{->}(0,2)(2,2)
\uput{.2}[90](1,2){$w$}
\psline{->}(2,4)(2,2)
\uput{.2}[180](2,3){$w$}
\psline{->}(2,4)(4,4)
\uput{.2}[90](3,4){$v$}
\uput{0}[0](2.5,1.5){$\revsto$}
\psline{->}(0,0)(4,0)
\uput{.2}[270](2,0){$v'$}
\psline{->}(4,4)(4,0)
\uput{.2}[0](4,2){$u'$}
\uput{0}[0](6,2){$\Longrightarrow$}
\psline{->}(10,4)(10,0)
\uput{.2}[180](10,2){$uv'$}
\psline{->}(10,4)(14,4)
\uput{.2}[90](12,4){$vu'$}
\uput{0}[0](11.5,2){$\revsto$}
\psarc(10,4){4}{270}{0}
\uput{0}[0](13,1){$\varepsilon$}
\vskip3mm
\caption{Cube condition: $u^{-1}w w^{-1} v \revsto v'u'^{-1} \Rightarrow (uv')^{-1} vu' \revsto \varepsilon$.}
\label{ccDiag}
\end{figure}

In the general case, the cube condition has to be checked for all triples of words on~$\A$, or for all triples of words in a superset of~$\A$ closed under~$\revsto$. However, in the homogeneous case (that is, when every relation preserves the length of words), check the cube condition for all triples of generators suffices to decide completeness.

A semigroup presentation~$\langle~\A\mid\RR~\rangle$ is \emph{complemented} if, for all generators~$x,y$ in~$\A$, there is at most one relation of the type~$x\cdots=y\cdots$ and no relation of the type~$x\cdots=x\cdots$. We will use the following criterium:

\begin{theorem}\cite{dehornoy:complete}\label{garsideCriterium} Every monoid defined by a complemented complete presentation and admitting a Garside element is a Garside monoid.
\end{theorem}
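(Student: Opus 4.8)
The plan is to isolate what each hypothesis controls and then assemble the four ingredients in the definition of a Garside monoid. Since a Garside element is handed to us, what remains is to show that $M$ is cancellative and that right lcms and left gcds (and their mirrors) exist for every pair. The engine throughout is completeness: by the equivalence recorded just before the statement, it means precisely that word reversing detects the congruence, i.e. for words $u,v$ over $\A$ one has $u\equiv^+_\RR v$ if and only if $u^{-1}v\revstorelR\varepsilon$. Complementedness plays the auxiliary role of making the relation applied at each reversing step unique, so that reversing is deterministic and, by completeness, confluent. I would record at the outset the standard consequence that reversing computes right lcms: if $u^{-1}v$ reverses to a terminal word $v'u'^{-1}$ with $v',u'\in\A^*$, then $uv'\equiv^+_\RR vu'$ represents $u\vee v$, so that two elements admit a right lcm as soon as they admit any common right multiple.

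First I would prove left-cancellativity. Given a generator $a$ with $au\equiv^+_\RR av$, I reverse $(au)^{-1}(av)=u^{-1}a^{-1}a\,v$: deleting the central $a^{-1}a$ yields $u^{-1}v$, while the hypothesis forces $(au)^{-1}(av)\revstorelR\varepsilon$ by completeness. Confluence then forces $u^{-1}v\revstorelR\varepsilon$ as well, whence $u\equiv^+_\RR v$; iterating along the letters of a common left factor gives full left-cancellativity. Right-cancellativity I would obtain by the mirror argument applied to the reversed presentation $\rev(\RR)$, whose monoid is the opposite of $M$ and in which right-cancellativity of $M$ becomes left-cancellativity.

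It then remains to upgrade the conditional right lcms to genuine ones and to produce gcds, and here the Garside element $\Delta$ enters. Because the left and right divisors of $\Delta$ coincide and generate $M$, one has $\Delta M=M\Delta$, and a short induction (writing an element as a product of generators, each dividing $\Delta$) shows that every element of $M$ left-divides some power $\Delta^n$. Consequently any two elements share a common right multiple, and by the opening remark $u\vee v$ exists for all pairs; the mirror statement gives left lcms. Finally, the finiteness of the divisor set of $\Delta$ supplies the Noetherianity (no infinite strictly descending chains for left-divisibility) which, together with left-cancellativity and the existence of right lcms, yields the left gcds $u\wedge v$, with right gcds following symmetrically. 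Assembling two-sided cancellativity, the existence of all lcms and gcds, and the given Garside element, $M$ meets the definition of a Garside monoid.

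The main obstacle I anticipate is the passage from the intrinsically one-sided output of word reversing to the two-sided demands of the conclusion. Reversing naturally delivers left-cancellativity and right lcms, but right-cancellativity and left gcds are not automatic: one must either check that $\rev(\RR)$ is again complemented and complete — equivalently, that the dual reversing is confluent — or exploit the self-dual character of $\Delta$, whose left and right divisors coincide, to transport right-sided facts to the left. A related subtlety, easy to overlook, is termination: reversing need not halt in general, and it is exactly the finiteness clause built into the definition of the Garside element that furnishes the Noetherian bound making both the lcm computations and the gcd construction well-defined.
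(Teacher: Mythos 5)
First, on the target of comparison: the paper does not prove this statement at all --- it is imported from Dehornoy \cite{dehornoy:complete} as a black box --- so your proposal has to be measured against that source's argument and against the machinery the paper builds around it. Your first half is essentially the standard and correct development: for a complemented presentation, reversing is deterministic and enjoys the confluence property you invoke, so completeness yields left-cancellativity exactly by your $a^{-1}a$-deletion argument, reversing computes conditional right lcms, and the Garside element upgrades these to unconditional right lcms once every element left-divides a power of $\Delta$.

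The genuine gap is the right-handed half of the theorem, and you name it yourself without closing it. The mirror argument through $\rev(\RR)$ is simply not available: completeness and complementedness are properties of \emph{left} reversing of~$\RR$, and neither is inherited by~$\rev(\RR)$ under the stated hypotheses; writing ``one must either check \dots\ or exploit \dots'' leaves right-cancellativity, left lcms and left gcds --- three of the defining conditions of a Garside monoid --- unproven. The fallback you gesture at (using $\Delta$ itself) is the correct route, but it needs a real argument in which the finiteness of the divisor set plays a role quite different from the ``termination bound'' you assign to it: since left and right divisors of $\Delta$ coincide and $M$ is left-cancellative, the complement map $x \mapsto x\backslash\Delta$ is a well-defined self-map of the divisor set, surjective because any right divisor $w$ with $\Delta = vw$ equals $v\backslash\Delta$, hence \emph{bijective by finiteness}; from this one gets $x\Delta = \Delta\,\partial^2(x)$ on divisors, then a well-defined surjective endomorphism $\psi$ with $z\Delta = \Delta\psi(z)$ (so even your claim $\Delta M = M\Delta$ is a consequence of this machinery, not a starting point), and only after $\psi$ is shown injective does cancellation on the right, and with it the transport of right-sided facts to left gcds, follow. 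That this is a genuine input and not bookkeeping is visible in the paper itself: Proposition~\ref{balbyaut} must \emph{assume} two-sided cancellativity, and for the specific monoid $\BBo(e,e,r)$ the right-handed symmetry is secured not by a formal mirror principle but by the explicit isomorphism $\BBo(e,e,r)\cong\rev(\BBo(e,e,r))$ of Lemma~\ref{MisomRevM}, which exists for this particular presentation and not for a general complemented complete one. Finally, your claim that finiteness of the divisor set ``supplies the Noetherianity'' is asserted rather than proved --- controlling the divisors of $\Delta^n$ already uses the gcd/normal-form apparatus under construction --- so the architecture of your proof is right, but its entire second half is missing.
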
}

\subsection{ The braid monoid~$\BBo(e,e,r)$ is Garside}
\label{ss:presIsGarside}

The aim of this subsection is to show:

\begin{theorem} 
\label{th:garside}
The braid monoid~$\BBo(e,e,r)$ is Garside.
\end{theorem}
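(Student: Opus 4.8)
The plan is to invoke Theorem~\ref{garsideCriterium}: to show that $\BBo(e,e,r)$ is Garside, it suffices to verify that the presentation $\PPo(e,e,r)$, read as a semigroup presentation, is (i) complemented, (ii) complete for reversing, and (iii) admits a Garside element. The first condition is essentially a matter of inspection. Examining the five families of relations $(R_1)$--$(R_5)$, for each ordered pair of distinct generators $x,y$ I would check that at most one relation begins with $x$ on the left and $y$ on the right, and that no relation has the forbidden form $x\cdots = x\cdots$. The only delicate family is $(R_5)$, the circle relations $t_it_{i-1}=t_jt_{j-1}$: for a fixed pair $t_i,t_j$ the relation is uniquely determined, so complementedness should hold, though one must be careful that the relations are organized so that each unordered pair contributes a single complement.

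Next I would address completeness. Since every relation in $\PPo(e,e,r)$ preserves word length, the presentation is \emph{homogeneous}, and by the remark following the cube-condition box it suffices to check the cube condition (CC) on all triples of \emph{generators} rather than arbitrary words---a crucial simplification. So the plan is to verify CC for every triple $(x,y,z)$ drawn from $\TE\cup\SS$. The strategy is to exploit the structural decompositions already set up in the excerpt: triples lying entirely within $\SS$ reduce to the classical braid monoid $\BBc(A_{r-2})$ (via $\PPo(0,0,r)$), triples mixing $s$-generators with $t$-generators fall under type-$B$ or type-$D$ sub-presentations $\PPo(1,1,r)$, $\PPo(2,2,r)$, and triples lying on the circle $\TE$ reduce to the dihedral/dual case $\PPo(e,e,2)$. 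For each family one carries out the word-reversing computation $x^{-1}ww^{-1}y \revstorelR \cdots$ and confirms the implication. Many cases are symmetric under the automorphism $\sda$ and the anti-isomorphism $\rev$ of Lemma~\ref{MisomRevM}, which I would use to cut down the casework substantially.

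For the Garside element, the natural candidate is the (right, equivalently left) lcm of all the generators, which exists once one knows lcm's exist; alternatively, I would build $\Delta$ explicitly using the type-$B$ embedding $\BBc(B_{r-1})\hookrightarrow\BBo(e,e,r)$ from Subsection~\ref{sss:folding} together with the disk element $\gamma$ and the powers of the diagram automorphism $\sda$, so that $\Delta$ is $\sda$-stable and its divisors generate the monoid. One must check that the set of left divisors of $\Delta$ coincides with its right divisors and is finite, which is where the rotational symmetry of the circle (Lemma~\ref{tauUpsi}, $\gamma g_i=g_{i+2}\gamma$) does the real work in controlling how the $t_i$ interact with $\gamma$.

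The main obstacle I expect is the completeness verification, specifically the cube-condition checks for triples that straddle the circle and the tail---triples of the form $(t_i,t_j,s_3)$ or $(t_i,s_3,s_4)$---where the reversing computations are genuinely intertwined and do not obviously collapse into a previously understood monoid. The relations $(R_3)$ (the $s_3t_is_3=t_is_3t_i$ braid relations) couple $s_3$ to \emph{every} node of the circle simultaneously, so the interaction of $s_3$ with a pair $t_i,t_j$ must be handled with care; here Lemma~\ref{gibraidrels}, which propagates the braid relation and the length-four relation $a\gamma a\gamma=\gamma a\gamma a$ across the whole circle, is the key tool that should make these reversings close up. Getting the bookkeeping right on these mixed triples, and confirming there are no genuinely new obstructions beyond those already resolved in the $A$-, $B$-, $D$-, and $I_2$-subcases, is where the real effort lies.
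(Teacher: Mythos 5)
Your proposal follows essentially the same route as the paper's proof: invoke Dehornoy's criterion (Theorem~\ref{garsideCriterium}), check complementedness by inspection, use homogeneity to reduce completeness to the cube condition on triples of generators, cut the casework via parabolic sub-presentations and symmetry, and produce the Garside element from the type-$B$ embedding, with balancedness coming from the rotational symmetry of the circle---this is precisely Lemma~\ref{complete}, Lemma~\ref{lambdaLcm} and Proposition~\ref{bali}.

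Two slips are worth flagging, though neither derails the plan. First, your inventory of residual hard triples is off: $(t_i,s_3,s_4)$ is \emph{not} problematic, since any triple containing a single $t$-generator lives inside the admissible sub-presentation $\PPo(1,1,r)$ and reduces to $\BBc(A_{r-1})$ (the paper's case $(\mathbf{1+2})$), while two $t$'s together with $s_j$ for $j\geq 4$ reduce to $\BBd(I_2(e))\times\BBc(A_{r-3})$. After the symmetry reductions the genuinely irreducible cases are $(t_i,t_j,s_3)$ \emph{and} $(t_i,s_3,t_j)$ with $i\neq j$: reversing $t_i^{-1}t_j$ via $(R_5)$ drags in new circle generators $t_{i-1},t_{j-1},\dots$, so these triples cannot collapse into any proper sub-presentation, and the paper settles them by the explicit reversing diagrams of Figures~\ref{C:case001b} and~\ref{C:case001a}---exactly the computations your proposal defers. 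Second, Lemma~\ref{gibraidrels} cannot serve as ``the key tool'' for closing these reversings: it is a group-level statement whose proof uses inverses, whereas word reversing is a purely positive computation; moreover its positive content---the braid relation between $s_3$ and every node of the circle---is already postulated as $(R_3)$ in $\PPo(e,e,r)$, so no propagation is needed at the monoid level (that lemma is what the paper needs for Theorem~\ref{beerpresthm}, not here). In short, your outline matches the paper, but the two explicit reversing diagrams are the real content of the completeness proof and remain to be carried out.
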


We prove the theorem in two parts: first completeness, then the Garside element.

\subsubsection{ Completeness}
\label{sss:completeness}

Taking advantage from knowledge of completeness for presentations associated to certain parabolic subdiagrams, we can check completeness of~$\PPo(e,e,r)$ after computing only few cases.
 
\begin{lemma}\label{complete}The presentation~$\PPo(e,e,r)$ is complemented and complete.
\end{lemma}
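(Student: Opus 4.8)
The plan is to verify the two defining conditions of Theorem~\ref{garsideCriterium} separately. First I would check that $\PPo(e,e,r)$ is \emph{complemented}: this is a purely syntactic inspection of the relations $(R_1)$--$(R_5)$. For each ordered pair of distinct generators $(x,y)$ one must confirm there is at most one relation beginning $x\cdots = y\cdots$, and that no relation has the form $x\cdots = x\cdots$. The braid relations $(R_1)$ and the commutation relations $(R_2)$, $(R_4)$ pose no difficulty. The point requiring slight care is the interaction of the circle relations $(R_5)$ with the braid relations $(R_3)$: the relation $s_3 t_i s_3 = t_i s_3 t_i$ starts with $s_3$ on the left and $t_i$ on the right, while $t_i t_{i-1} = t_j t_{j-1}$ provides relations starting with $t_i$ and $t_j$. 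Since each $(R_5)$ relation is homogeneous and, read as $t_i\cdots = t_j\cdots$, involves distinct leading letters $t_i \neq t_j$, one checks that for a fixed leading pair only a single relation appears; the absence of any $x\cdots = x\cdots$ relation is immediate because the two sides of every relation always begin with different generators.

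The substantial part is \emph{completeness}, established in the homogeneous setting. Since every relation of $\PPo(e,e,r)$ preserves word length, Theorem~\ref{garsideCriterium}'s reference to the homogeneous case applies: it suffices to check the cube condition (CC) modulo $\RR$ for all triples of \emph{generators}, rather than all triples of words. I would organize the triples $(x,y,z)$ according to which of the generators lie on the circle $\TE$ and which lie in $\SS$. The plan is to reduce as many cases as possible to already-known completeness results via the parabolic subdiagrams catalogued in Subsection~\ref{sss:parabolic}: triples drawn entirely from $\SS$ fall under the completeness of $\BBc(A_{r-1})$; triples involving at most two circle generators together with $s_3$ reduce to $\PPo(e,e,2)$, $\PPo(2,2,r)$, or the type-$B$ situation; and triples where an $s_j$ with $j\geq 4$ commutes with everything relevant split off trivially. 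This inheritance argument is exactly what the remark about cube-condition calculations in Subsection~\ref{sss:parabolic} anticipates.

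The residual cases, which cannot be imported wholesale from a classical subdiagram, are those genuinely exploiting the circle relation $(R_5)$ among three distinct circle generators $t_i, t_j, t_k$, and the mixed case of two circle generators together with $s_3$. For these I would carry out the word-reversing computations directly: starting from $x^{-1} y$, apply the reversing rules using $(R_5)$ and $(R_3)$, and check that the diagram closes, i.e. that $(xu)^{-1} yv \revstorelR \varepsilon$ in the notation of the CC. The circle identity $g_i g_{i-1} = g_j g_{j-1}$ and its consequences (Lemmas~\ref{tauUpsi} and~\ref{gibraidrels}) are the algebraic engine that forces these reversings to terminate in $\varepsilon$.

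The main obstacle I expect is precisely the bookkeeping in these irreducible mixed cases: unlike the classical braid monoid, the circle relations create many short relations sharing leading letters among the $t_i$, so a single reversing step can branch, and one must verify confluence of the reversing process rather than merely follow one path. Controlling this is where the homogeneity (guaranteeing termination) and the complemented property (guaranteeing that at each step the reversing move is unique) work together; once those are in hand, each residual cube condition reduces to a finite, if somewhat tedious, diagram chase whose closure follows from the circle combinatorics developed in Subsection~\ref{sss:circles}.
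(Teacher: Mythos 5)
Your proposal follows essentially the same route as the paper: complementedness by syntactic inspection, homogeneity reducing the cube condition to triples of generators, reduction of most triples to known completeness for parabolic sub-presentations, and direct word-reversing computations for the residual triples mixing two circle generators with~$s_3$ (the paper's cases~$(t_i,t_j,s_3)$ and~$(t_i,s_3,t_j)$ with~$i\neq j$). The only divergences are cosmetic: the paper disposes of triples in~$\TE^3$ by citing completeness of the dual presentation of~$\BBd(I_2(e))$ rather than reversing them by hand, and your worry about branching is vacuous, since complementedness makes each reversing step unique---as you yourself note at the end.
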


\begin{proof}
The presentation~$\PPo(e,e,r)$ is complemented and homogeneous. Now, it suffices to check whether every triple~$(x,y,z)$ of generators in~$\TE\cup\SS$ satisfies the~CC.

From Subsection~\ref{sss:parabolic} about parabolic subdiagrams, we deduce:

\begin{enumerate}
\item[$(\mathbf{0+3})$] Every triple in~$\SS^3$ satisfies the CC because it holds for~$\BBc(A_{r-2})$.
%
\item[$(\mathbf{1+2})$] Every triple in~$(\TE\times\SS^2)\cup(\SS\times\TE\times\SS)\cup(\SS^2\times\TE)$ satisfies the CC because it holds for~$\BBc(A_{r-1})$.
\item[$(\mathbf{3+0})$] Every triple in~$\TE^3$ satisfies the CC, because it holds for~$\BBd(I_2(e))$.
\end{enumerate}

Thus we need only verify the cube condition on triples of type~$(\mathbf{2+1})$, that is, containing two generators from~$\TE$ and one generator from~$\SS$. This case can be decomposed into two subcases depending on whether this generator from~$\SS$ is~$s_3$ (say case~$(\mathbf{a})$) or not (case~$(\mathbf{b})$). From Subsection~\ref{sss:parabolic} again, with~$\SSM=\SS\setminus\{s_3\}$, we find:
\begin{enumerate}
\item[$(\mathbf{2+1b})$] Every triple in~$(\TE^2\times\SSM)\cup(\TE\times\SSM\times\TE)\cup(\SSM\times\TE^2)$ satisfies the CC because it holds for~$\BBd(I_2(e))\times\BBc(A_{r-3})$.
\end{enumerate}
Therefore we need only verify the CC on triples of type~$(\mathbf{2+1a})$, that is, containing two generators from~$\TE$ and the generator~$s_3$ from~$\SS$. Now, various symmetry considerations reduce again the number of cases that need be considered.

\medbreak

\vbox{On the one hand, triples of the form~$(x,y,x)$ and~$(x,x,y)$ always satisfy the CC.
\vskip20mm
\hskip45mm
\psset{unit=5mm}
\uput{0}[0](-8,2){Case~$(x,y,x)$:}
\psline{->}(0,2)(0,0)
\uput{.2}[180](0,1){$x$}
\psline{->}(0,2)(2,2)
\uput{.2}[90](1,2){$x$}
\psline{->}(2,4)(2,2)
\uput{.2}[180](2,3){$x$}
\psline{->}(2,4)(4,4)
\uput{.2}[90](3,4){$y$}
\psarc(0,2){2}{270}{0}
\uput{0}[0](2.8,3){$\revsto$}
\psline{->}(2,2)(4,2)
\uput{.2}[270](3,2){$u$}
\psline{->}(4,4)(4,2)
\uput{.2}[0](4,3){$v$}
\uput{0}[0](6,2){$\Longrightarrow$}
\psline{->}(10,4)(10,2)
\uput{.2}[180](10,3){$x$}
\psline{->}(10,2)(10,0)
\uput{.2}[180](10,1){$u$}
\psline{->}(10,4)(12,4)
\uput{.2}[90](11,4){$y$}
\psline{->}(12,4)(14,4)
\uput{.2}[90](13,4){$v$}
\uput{0}[0](10.8,3){$\revsto$}
\psline{->}(10,2)(12,2)
\uput{.2}[270](11,2){$u$}
\psline{->}(12,4)(12,2)
\uput{.2}[0](12,3){$v$}
\psarc(10,2){2}{270}{0}
\psarc(12,4){2}{270}{0}
\uput{0}[0](11.5,0.5){$\varepsilon$}
\uput{0}[0](13.5,2.5){$\varepsilon$}
\vskip3mm

\vskip20mm
\hskip45mm
\psset{unit=5mm}
\uput{0}[0](-8,2){Case~$(x,x,y)$:}
\psline{->}(0,2)(0,0)
\uput{.2}[180](0,1){$x$}
\psline{->}(0,2)(2,2)
\uput{.2}[90](1,2){$y$}
\psline{->}(2,4)(2,2)
\uput{.2}[180](2,3){$y$}
\psline{->}(2,4)(4,4)
\uput{.2}[90](3,4){$x$}
\uput{0}[0](0.8,1){$\revsto$}
\psline{->}(0,0)(2,0)
\uput{.2}[270](1,0){$u$}
\psline{->}(2,2)(2,0)
\uput{.2}[0](2,1){$v$}
\uput{0}[0](2.8,3){$\revsto$}
\psline{->}(2,2)(4,2)
\uput{.2}[270](3,2){$v$}
\psline{->}(4,4)(4,2)
\uput{.2}[0](4,3){$u$}
\psarc(2,2){2}{270}{0}
\uput{0}[0](6,2){$\Longrightarrow$}
\psline{->}(10,4)(10,2)
\uput{.2}[180](10,3){$x$}
\psline{->}(10,2)(10,0)
\uput{.2}[180](10,1){$u$}
\psline{->}(10,4)(12,4)
\uput{.2}[90](11,4){$x$}
\psline{->}(12,4)(14,4)
\uput{.2}[90](13,4){$u$}
\psarc(10,4){2}{270}{0}
\psarc(10,4){4}{270}{0}
\uput{0}[0](13,1){$\varepsilon$}
\vskip3mm}
%
On the other hand, a triple~$(x,y,z)$ satisfies the CC if and only if~$(y,x,z)$ satisfies the CC: this may be seen by reflecting the word reversing diagram through an axis at~$\frac{3\pi}{4}$.

This results in the following two cases:
\begin{itemize}
\item case~$(x,y,z) = (t_i, t_j, s_3)$ with~$i$ and~$j$ distinct;
\item case~$(x,y,z) = (t_i,  s_3, t_j)$ with~$i$ and~$j$ distinct.
\end{itemize}
The calculations are shown in Figures~\ref{C:case001b} and \ref{C:case001a} respectively.\end{proof}

\begin{figure}[h]
	\begin{center}
	\hspace*{-20pt}
	\begin{pspicture*}(1.6,1)(15,8)
	\psset{unit=12mm}
	\footnotesize
		\psline{->}(3,5)(5,5)
		\uput{0.05}[90](4,5){$t_j$}
		\psline{->}(2,4)(3,4)
		\uput{0.05}[90](2.5,4){$s_3$}
		\psline{->}(3,4)(4,4)
		\uput{0.05}[90](3.5,4){$t_j$}
		\psline{->}(4,4)(5,4)
		\uput{0.05}[90](4.5,4){$s_3$}
		\psline{->}(2,4)(2,2)
		\uput{0.05}[180](2,3){$t_i$}
		\psline{->}(3,5)(3,4)
		\uput{0.05}[180](3,4.5){$s_3$}
		\psline{->}(3,4)(3,3)
		\uput{0.05}[180](3,3.5){$t_i$}
		\psline{->}(3,3)(3,2)
		\uput{0.05}[180](3,2.5){$s_3$}
		\psline{->}(4,4)(4,3)
		\uput{0.05}[180](4,3.5){$t_j^{\ssda}$}
		\psline{->}(4,3)(4,2.5)
		\uput{0.05}[180](4,2.75){$s_3$}
		\psline{->}(4,2.5)(4,2)
		\uput{0.05}[180](4,2.25){$t_i^{\ssda}$}
		\psline{->}(3,3)(4,3)
		\uput{0.05}[90](3.5,3){$t_i^{\ssda}$}
		\psline{->}(4,3)(4.5,3)
		\uput{0.05}[90](4.25,3){$s_3$}
		\psline{->}(4.5,3)(5,3)
		\uput{0.05}[90](4.75,3){$t_j^{\ssda}$}
		\psellipticarc(4,3)(.5,.5){270}{0}
		\uput{0.5}[315](4,3){$\varepsilon$}
		\psline{->}(5,5)(5,4.5)
		\uput{0.05}[0](5,4.75){$s_3$}
		\psline{->}(5,4.5)(5,4)
		\uput{0.05}[0](5,4.25){$t_j$}
		\psline{->}(5,4)(5,3.5)
		\uput{0.05}[0](5,3.75){$t_j^{\ssda}$}
		\psline{->}(5,3.5)(5,3)
		\uput{0.05}[0](5,3.25){$s_3$}
		\psline{->}(5,3)(5,2)
		\uput{0.05}[0](5,2.5){$t_j^{\ssda\!\ssda}$}
		\psline{->}(2,2)(2.5,2)
		\uput{0.05}[270](2.25,2){$s_3^{\phantom{\ssda}}$}
		\psline{->}(2.5,2)(3,2)
		\uput{0.05}[270](2.75,2){$t_i^{\phantom{\ssda}}$}
		\psline{->}(3,2)(3.5,2)
		\uput{0.05}[270](3.25,2){$t_i^{\ssda}$}
		\psline{->}(3.5,2)(4,2)
		\uput{0.05}[270](3.75,2){$s_3^{\phantom{\ssda}}$}
		\psline{->}(4,2)(5,2)
		\uput{0.05}[270](4.5,2){$t_i^{\ssda\!\ssda}$}
		\uput{0}[90](6,3.4){$\Longrightarrow$}
		\psline{->}(7,6)(8.5,6)
		\uput{0.05}[90](7.75,6){$t_j$}
		\psline{->}(8.5,6)(9.5,6)
		\uput{0.05}[90](9,6){$s_3$}
		\psline{->}(9.5,6)(10.5,6)
		\uput{0.05}[90](10,6){$t_j$}
		\psline{->}(10.5,6)(11,6)
		\uput{0.05}[90](10.75,6){$t_j^{\ssda}$}
		\psline{->}(11,6)(11.5,6)
		\uput{0.05}[90](11.25,6){$s_3$}
		\psline{->}(11.5,6)(12,6)
		\uput{0.05}[90](11.75,6){$t_j^{\ssda\!\ssda}$}
		\psline{->}(7,6)(7,4.5)
		\uput{0.05}[180](7,5.25){$t_i$}
		\psline{->}(7,4.5)(7,3.5)
		\uput{0.05}[180](7,4){$s_3$}
		\psline{->}(7,3.5)(7,2.5)
		\uput{0.05}[180](7,3){$t_i$}
		\psline{->}(7,2.5)(7,2)
		\uput{0.05}[180](7,2.25){$t_i^{\ssda}$}
		\psline{->}(7,2)(7,1.5)
		\uput{0.05}[180](7,1.75){$s_3$}
		\psline{->}(7,1.5)(7,1)
		\uput{0.05}[180](7,1.25){$t_i^{\ssda\!\ssda}$}
		\psline{->}(9.5,5.5)(10.5,5.5)
		\uput{0.05}[270](10,5.5){$t_j^{\ssda\!\ssda}$}
		\psline{->}(7,4.5)(8.5,4.5)
		\uput{0.05}[90](7.75,4){$t_i^{\ssda}$}
		\psline{->}(8.5,4.5)(9,4.5)
		\uput{0.05}[90](8.75,4.5){$s_3$}
		\psline{->}(9,4.5)(9.5,4.5)
		\uput{0.05}[90](9.25,4.5){$t_j^{\ssda}$}
		\psline{->}(9.5,4.5)(10,4.5)
		\uput{0.05}[270](9.75,4.5){$t_j^{\ssda\!\ssda}$}
		\psline{->}(10,4.5)(10.5,4.5)
		\uput{0.05}[270](10.25,4.5){$s_3$}
		\psline{->}(7.5,3.5)(7.5,2.5)
		\uput{0.05}[0](7.5,3){$t_i^{\ssda\!\ssda}$}
		\psline{->}(8.5,6)(8.5,4.5)
		\uput{0.05}[180](8.5,5.25){$t_j^{\ssda}$}
		\psline{->}(8.5,4.5)(8.5,4)
		\uput{0.05}[180](8.5,4.25){$s_3$}
		\psline{->}(8.5,4)(8.5,3.5)
		\uput{0.05}[180](8.5,3.75){$t_i^{\ssda}$}
		\psline{->}(8.5,3.5)(8.5,3)
		\uput{0.05}[0](8.5,3.25){$t_i^{\ssda\!\ssda}$}
		\psline{->}(8.5,3)(8.5,2.5)
		\uput{0.05}[0](8.5,2.75){$s_3$}
		\psarc(8.5,4.5){0.5}{270}{0}
		\uput{0.5}[315](8.5,4.5){$\varepsilon$}
		\psarc(8.5,4.5){2}{270}{0}
		\uput{2}[315](8.5,4.5){$\varepsilon$}
		\psellipticarc(8.5,3.5)(1,0.5){270}{0}
		\uput{0.65}[315](8.5,3.5){$\varepsilon$}
		\psline{->}(7,3.5)(7.5,3.5)
		\uput{0.05}[270](7.25,3.5){$t_i^{\ssda}$}
		\psline{->}(7.5,3.5)(8.5,3.5)
		\uput{0.05}[270](8,3.5){$s_3$}
		\psline{->}(8.5,3.5)(9.5,3.5)
		\uput{0.05}[90](9,3.5){$t_i^{\ssda\!\ssda}$}
		\psline{->}(9.5,6)(9.5,5.5)
		\uput{0.05}[0](9.5,5.75){$t_j^{\ssda}$}
		\psline{->}(9.5,5.5)(9.5,4.5)
		\uput{0.05}[0](9.5,5){$s_3$}
		\psline{->}(9.5,4.5)(9.5,3.5)
		\uput{0.05}[180](9.5,4){$t_j^{\ssda\!\ssda}$}
		\psellipticarc(9.5,4.5)(0.5,1){270}{0}
		\uput{0.65}[315](9.5,4.5){$\varepsilon$}
		\psline{->}(10.5,6)(10.5,5.5)
		\uput{0.05}[180](10.5,5.75){$t_j^{\ssda}$}
		\psline{->}(10.5,5.5)(10.5,5)
		\uput{0.05}[0](10.5,5.25){$s_3$}
		\psline{->}(10.5,5)(10.5,4.5)
		\uput{0.05}[0](10.5,4.75){$t_j^{\ssda\!\ssda}$}
		\psarc(10.5,6){0.5}{270}{0}
		\uput{0.5}[315](10.5,6){$\varepsilon$}
		\psarc(10.5,6){1}{270}{0}
		\uput{1}[315](10.5,6){$\varepsilon$}
		\psarc(10.5,6){1.5}{270}{0}
		\uput{1.5}[315](10.5,6){$\varepsilon$}
		\psline{->}(7,2.5)(7.5,2.5)
		\uput{0.05}[90](7.25,2.5){$t_i^{\ssda}$}
		\psline{->}(7.5,2.5)(8,2.5)
		\uput{0.05}[270](7.75,2.5){$s_3$}
		\psline{->}(8,2.5)(8.5,2.5)
		\uput{0.05}[270](8.25,2.5){$t_i^{\ssda\!\ssda}$}
		\psarc(7,2.5){0.5}{270}{0}
		\uput{0.5}[315](7,2.5){$\varepsilon$}
		\psarc(7,2.5){1}{270}{0}
		\uput{1}[315](7,2.5){$\varepsilon$}
		\psarc(7,2.5){1.5}{270}{0}
		\uput{1.5}[315](7,2.5){$\varepsilon$}
		\end{pspicture*}
	\end{center}
\caption{\hbox{Proof of~completeness: the case~$(x,y,z) = (t_i, t_j, s_3)$ with~$i\neq j$.}}
\label{C:case001b}
\end{figure}

\begin{figure}[h]
	\begin{center}
	\hspace*{-20pt}
	\begin{pspicture*}(1.6,0.5)(15,8)
	\psset{unit=12mm}
		\footnotesize
		\psline{->}(3,5)(5,5)
		\uput{0.1}[90](4,5){$s_3$}
		\psline{->}(2,4)(3,4)
		\uput{0.1}[90](2.5,4){$t_j$}
		\psline{->}(3,4)(4,4)
		\uput{0.1}[90](3.5,4){$s_3$}
		\psline{->}(4,4)(5,4)
		\uput{0.1}[90](4.5,4){$t_j$}
		\psline{->}(2,4)(2,2)
		\uput{0.1}[180](2,3){$t_i$}
		\psline{->}(3,5)(3,4)
		\uput{0.1}[180](3,4.5){$t_j$}
		\psline{->}(3,4)(3,2)
		\uput{0.1}[180](3,3){$t_j^{\ssda}$}
		\psline{->}(4,4)(4,3)
		\uput{0.1}[180](4,3.5){$t_j^{\ssda}$}
		\psline{->}(4,3)(4,2)
		\uput{0.1}[180](4,2.5){$s_3$}
		\psline{->}(4,3)(5,3)
		\uput{0.1}[90](4.5,3){$t_j^{\ssda\!\ssda}$}
		\psline{->}(5,5)(5,4.5)
		\uput{0.1}[0](5,4.75){$t_j$}
		\psline{->}(5,4.5)(5,4)
		\uput{0.1}[0](5,4.25){$s_3$}
		\psline{->}(5,4)(5,3)
		\uput{0.1}[0](5,3.5){$t_j^{\ssda}$}
		\psline{->}(5,3)(5,2.5)
		\uput{0.1}[0](5,2.75){$s_3$}
		\psline{->}(5,2.5)(5,2)
		\uput{0.1}[0](5,2.25){$t_j^{\ssda\!\ssda}$}
		\psline{->}(2,2)(3,2)
		\uput{0.1}[270](2.5,2){$t_i^{\ssda}$}
		\psline{->}(3,2)(3.5,2)
		\uput{0.1}[270](3.25,2){$s_3^{\phantom{\ssda}}$}
		\psline{->}(3.5,2)(4,2)
		\uput{0.1}[270](3.75,2){$t_j^{\ssda}$}
		\psline{->}(4,2)(4.5,2)
		\uput{0.1}[270](4.25,2){$t_j^{\ssda\!\ssda}$}
		\psline{->}(4.5,2)(5,2)
		\uput{0.1}[270](4.75,2){$s_3^{\phantom{\ssda}}$}
		\uput{0}[90](6,3.4){$\Longrightarrow$}
		\psline{->}(7,6)(9,6)
		\uput{0.1}[90](8,6){$s_3$}
		\psline{->}(9,6)(10,6)
		\uput{0.1}[90](9.5,6){$t_j$}
		\psline{->}(10,6)(11,6)
		\uput{0.1}[90](10.5,6){$s_3$}
		\psline{->}(11,6)(11.5,6)
		\uput{0.1}[90](11.25,6){$t_j^{\ssda}$}
		\psline{->}(11.5,6)(12,6)
		\uput{0.1}[90](11.75,6){$s_3$}
		\psline{->}(12,6)(12.5,6)
		\uput{0.1}[90](12.25,6){$t_j^{\ssda\!\ssda}$}
		\psline{->}(7,6)(7,4)
		\uput{0.1}[180](7,5){$t_i$}
		\psline{->}(7,4)(7,2.5)
		\uput{0.1}[180](7,3.25){$t_i^{\ssda}$}
		\psline{->}(7,2.5)(7,2)
		\uput{0.1}[180](7,2.25){$s_3$}
		\psline{->}(7,2)(7,1.5)
		\uput{0.1}[180](7,1.75){$t_j^{\ssda}$}
		\psline{->}(7,1.5)(7,1)
		\uput{0.1}[180](7,1.25){$t_j^{\ssda\!\ssda}$}
		\psline{->}(7,1)(7,0.5)
		\uput{0.1}[180](7,0.75){$s_3$}
		\psline{->}(9,6)(9,5)
		\uput{0.1}[180](9,5.5){$t_i$}
		\psline{->}(9,5)(9,4)
		\uput{0.1}[180](9,4.5){$s_3$}
		\psline{->}(9,4)(9,3.5)
		\uput{0.1}[180](9,3.75){$t_i^{\ssda}$}
		\psline{->}(9,3.5)(9,3)
		\uput{0.1}[0](9,3.25){$s_3$}
		\psline{->}(9,3)(9,2.5)
		\uput{0.1}[0](9,2.75){$t_i^{\ssda\!\ssda}$}
		\psarc(9,4){0.5}{270}{0}
		\uput{0.5}[315](9,4){$\varepsilon$}
		\psarc(9,4){1}{270}{0}
		\uput{1}[315](9,4){$\varepsilon$}
		\psellipticarc(9,4)(2,1.5){270}{0}
		\uput{1.7}[315](9,4){$\varepsilon$}
		\psline{->}(10,6)(10,5)
		\uput{0.1}[180](10,5.5){$t_j^{\ssda}$}
		\psline{->}(10,5)(10,4.5)
		\uput{0.1}[180](10,4.75){$s_3$}
		\psline{->}(10,4.5)(10,4)
		\uput{0.1}[180](10,4.25){$t_i^{\ssda}$}
		\psarc(10,5){0.5}{270}{0}
		\uput{.5}[315](10,5){$\varepsilon$}
		\psline{->}(9,5)(10,5)
		\uput{0.1}[90](9.5,5){$t_i^{\ssda}$}
		\psline{->}(10,5)(10.5,5)
		\uput{0.1}[90](10.25,5){$s_3$}
		\psline{->}(10.5,5)(11,5)
		\uput{0.1}[270](10.75,5){$t_j^{\ssda}$}
		\psline{->}(7,4)(8,4)
		\uput{0.1}[90](7.5,4){$s_3$}
		\psline{->}(8,4)(9,4)
		\uput{0.1}[90](8.5,4){$t_i$}
		\psline{->}(9,4)(9.5,4)
		\uput{0.1}[90](9.25,4){$t_i^{\ssda}$}
		\psline{->}(9.5,4)(10,4)
		\uput{0.1}[270](9.75,4){$s_3$}
		\psline{->}(10,4)(11,4)
		\uput{0.1}[270](10.5,4){$t_i^{\ssda\!\ssda}$}
		\psline{->}(11,6)(11,5.5)
		\uput{0.1}[180](11,5.75){$t_j^{\ssda}$}
		\psline{->}(11,5.5)(11,5)
		\uput{0.1}[0](11,5.25){$s_3$}
		\psline{->}(11,5)(11,4)
		\uput{0.1}[0](11,4.5){$t_j^{\ssda\!\ssda}$}
		\psarc(11,6){0.5}{270}{0}
		\uput{.5}[315](11,6){$\varepsilon$}
		\psarc(11,6){1}{270}{0}
		\uput{1}[315](11,6){$\varepsilon$}
		\psellipticarc(11,6)(1.5,2){270}{0}
		\uput{1.7}[315](11,6){$\varepsilon$}
		\psline{->}(8,4)(8,3.5)
		\uput{0.1}[0](8,3.75){$t_i^{\ssda}$}
		\psline{->}(8,3.5)(8,2.5)
		\uput{0.1}[0](8,3){$s_3$}
		\psline{->}(8,2.5)(8,1.5)
		\uput{0.05}[0](8,2){$t_i^{\ssda\!\ssda}$}
		\psline{->}(8,3.5)(9,3.5)
		\uput{0.1}[270](8.5,3.5){$t_i^{\ssda\!\ssda}$}
		\psline{->}(7,2.5)(7.5,2.5)
		\uput{0.1}[90](7.25,2.5){$s_3$}
		\psline{->}(7.5,2.5)(8,2.5)
		\uput{0.1}[90](7.75,2.5){$t_i^{\ssda}$}
		\psline{->}(8,2.5)(8.5,2.5)
		\uput{0.1}[90](8.25,2.5){$t_i^{\ssda\!\ssda}$}
		\psline{->}(8.5,2.5)(9,2.5)
		\uput{0.1}[90](8.75,2.5){$s_3$}
		\psarc(7,2.5){0.5}{270}{0}
		\uput{0.5}[315](7,2.5){$\varepsilon$}
		\psarc(7,2.5){2}{270}{0}
		\uput{2}[315](7,2.5){$\varepsilon$}
		\psellipticarc(8,2.5)(0.5,1){270}{0}
		\uput{0.65}[315](8,2.5){$\varepsilon$}
		\psline{->}(7,1.5)(8,1.5)
		\uput{0.05}[90](7.5,1.5){$t_j^{\ssda\!\ssda}$}
		\psellipticarc(7,1.5)(1,0.5){270}{0}
		\uput{0.65}[315](7,1.5){$\varepsilon$}
		\end{pspicture*}
	\end{center}
\caption{\hbox{Proof of~completeness: the case~$(x,y,z) = (t_i, s_3, t_j)$ with~$i\neq j$.}}
\label{C:case001a}
\end{figure}
 
\subsubsection{ Garside element}
\label{sss:GarsideElement}

Let~$\tau$ be the element~$t_i t_{i-1}$ of~$\BBo(e,e,r)$. Since $t_i t_{i-1} = t_j t_{j-1}$ holds for all~$i, j$,
$\tau$ is independent of~$i$. It is a common multiple of~$\TE$; and since no word of length one
could be a multiple of all the~$t_i$, $\tau$ is the lcm of~$\TE$.

\medskip

\vbox{
The classical braid monoid~$\BBc(2,1,n)$ for~$\BB(2,1,n)$ (indeed, $\BB(d,1,n)$ for any~$d \geq 2$) is defined by the following Coxeter diagram:

\begin{center}
\begin{pspicture}(0,1)(7,1)
%
\qline(0,.94)(1.6,.94)
\qline(0,1.06)(1.6,1.06)
\qline(1.6,1)(3.9,1)
\psset{linestyle=dashed, dash=1pt 3pt, linecolor=black}
\qline(3.9,1)(4.7,1)
\psset{linestyle=solid}
\qline(4.7,1)(7,1)
\pscircle*[linecolor=white](0,1){.18}
\pscircle(0,1){.18}
\uput{.3}[90](0,1){$q_1$}
\pscircle*[linecolor=white](1.6,1){.18}
\pscircle(1.6,1){.18}
\uput{.3}[90](1.6,1){$q_2$}
\pscircle*[linecolor=white](3.2,1){.18}
\pscircle(3.2,1){.18}
\uput{.3}[90](3.2,1){$q_3$}
\pscircle*[linecolor=white](5.4,1){.18}
\pscircle(5.4,1){.18}
\uput{.3}[90](5.4,1){$q_{n-1}$}
\pscircle*[linecolor=white](7,1){.18}
\pscircle(7,1){.18}
\uput{.3}[90](7,1){$q_{n}$}
\end{pspicture}
\end{center}}

The Garside element~$\Delta_{B_n}$ of~$\BBc(2,1,n) \sim \BBc(B_n)$ is the lcm of~$Q_n=\{q_1, q_2, {\ldots}, q_n\}$, which can be written in the various forms:
$$ \begin{array}{rcl}
\Delta_{B_n}
& = & q_1 (q_2 q_1 q_2) \cdots (q_{n} \cdots q_2 q_1 q_2 \cdots q_{n})\\
& = & (q_{n} \cdots q_2 q_1 q_2 \cdots q_{n}) \cdots (q_2 q_1 q_2) q_1\\
& = & (q_1 q_2 \cdots q_n)^n\\
& = & (q_n \cdots q_2 q_1)^n.
\end{array}$$
This is a central element  of~$\BBc(B_n)$.

Define $\hom:Q_{r-1}^* \rightarrow (\TE \cup\SS)^*$ by 
$\hom(q_i) = \left\{
\begin{array}{ll}
\tau & \mbox{for~$i = 1$, and}\\
s_{i+1} & \mbox{for~$i>1$.}
\end{array} \right.$

As mentioned in Subsection~\ref{sss:folding}, $\hom$ induces an injection~$\BBc(B_{r-1})\hookrightarrow\BBo(e,e,r)$; in particular, the poset structures with respect to \cleq\ coincide on~$\BBc(B_{r-1})$ and~$\BBo(e,e,r)$. We deduce that the element~$\Lambda=\hom(\Delta_{B_{r-1}})$ in~$\BBo(e,e,r)$ has the following decompositions:
$$\begin{array}{rclll}
\Lambda
& = & \tau  (s_3 \tau s_3)  \cdots (s_{r} \cdots s_4 s_3 \tau s_3 s_4 \cdots s_{r}) & = & (\t s_3 \cdots s_r)^{r-1} \\
& = &  (s_{r} \cdots s_4 s_3 \tau s_3 s_4 \cdots s_{r}) \cdots  (s_3 \tau s_3)  \t & = & (s_r \cdots s_3 \t)^{r-1}
\end{array}
$$
and is precisely the least common multiple of~$\hom(Q_{r-1}) = \{\tau\} \cup\SS$. 
Since $\tau$ is the lcm of~$\TE$, we deduce:

\begin{lemma}
\label{lambdaLcm}The element~$\Lambda$ is the lcm of~$\TE \cup\SS$.
\end{lemma}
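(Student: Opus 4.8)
The plan is to deduce the statement formally from the two facts already established just above: that $\tau$ is the lcm of $\TE$, and that $\Lambda$ is the lcm of $\{\tau\}\cup\SS$. The only machinery required is the transitivity of the left-divisibility relation $\cleq$, which is a genuine partial order because $\BBo(e,e,r)$ is cancellative (this comes from completeness, Lemma~\ref{complete}, via \"Ore's conditions). Concretely, I would show that an element of $\BBo(e,e,r)$ is a common right multiple of $\TE\cup\SS$ \emph{if and only if} it is a common right multiple of $\{\tau\}\cup\SS$. Once these two families of common multiples are seen to coincide, their $\cleq$-least elements must coincide as well, and since $\Lambda$ is the least common multiple of $\{\tau\}\cup\SS$, it is equally the least common multiple of $\TE\cup\SS$.

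First I would treat the forward direction. Let $c$ be a common right multiple of $\TE\cup\SS$. Then $c$ is in particular a common right multiple of $\TE$, so $\tau\cleq c$ because $\tau$ is, by definition, the \emph{least} such multiple; and $s_j\cleq c$ for every $s_j\in\SS$ by hypothesis. Hence $c$ is a common right multiple of $\{\tau\}\cup\SS$. For the converse, let $c$ be a common right multiple of $\{\tau\}\cup\SS$, so that $\tau\cleq c$ and $s_j\cleq c$ for all $j$. Since $\tau$ is the lcm of $\TE$, it is in particular a common multiple of $\TE$, whence $t_i\cleq\tau$ for every $i\in\Z/e$; transitivity of $\cleq$ then yields $t_i\cleq\tau\cleq c$, so $c$ is a common right multiple of $\TE\cup\SS$. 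This establishes the equality of the two families of common right multiples, and with it the claim.

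There is no real obstacle at this step: all the substantive work was carried out upstream, in identifying $\tau$ as the lcm of $\TE$ (through the circle lemmas, ultimately Lemma~\ref{CoxeterRelationforClosedCircle}) and $\Lambda$ as the lcm of $\{\tau\}\cup\SS$ (through the embedding $\hom\colon\BBc(B_{r-1})\hookrightarrow\BBo(e,e,r)$ and the known closed forms of $\Delta_{B_{r-1}}$). The single point that warrants care is to invoke the defining property of an lcm as the \emph{least} common multiple—that it left-divides every common multiple, not merely that it is some common multiple. It is precisely this ``leastness'' of $\tau$ that makes the replacement of the whole circle $\TE$ by its single lcm $\tau$ reversible, and hence lets the two computations of the lcm agree.
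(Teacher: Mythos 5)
Your proof is correct and follows essentially the same route as the paper, which deduces Lemma~\ref{lambdaLcm} directly from the two facts that $\tau$ is the lcm of~$\TE$ and that $\Lambda=\hom(\Delta_{B_{r-1}})$ is the lcm of~$\{\tau\}\cup\SS$; you merely make explicit the transitivity argument that the paper compresses into the words ``Since $\tau$ is the lcm of~$\TE$, we deduce.'' One cosmetic remark: cancellativity here comes from completeness of the complemented presentation (Dehornoy's criterion), while \"Ore's conditions serve to embed the monoid in its group of fractions, but this misattribution does not affect your argument.
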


\noindent
Also, by centrality of~$\Delta_{B_{r-1}}$ in~$\BBc(B_{r-1})$, we have
$$\Lambda\t = \t \Lambda\mbox{ and } \Lambda s_p = s_p \Lambda\mbox{ for } 3 \leq p \leq r.$$

Let~$\Lambda_2 = \tau$, and~$\Lambda_p =  s_p \cdots s_3 \tau s_3 \cdots s_p$ for~$3 \leq p \leq r$.
We find:\begin{center}
\fbox{\parbox{95pt}{
\vspace{2mm}{
$~~\Lambda = \Lambda_2 \Lambda_3 \cdots \Lambda_r.~~$}
\vspace{2mm}}}
\end{center}

A \emph{balanced element} in a monoid is an element~$\beta$ 
such that~$x \cleq \beta$ holds precisely when~$\beta \cgeq x$ holds. 
 
The following result could be deduced from older results~(see for instance~\cite{dfx} or~\cite{dehornoy:thin}), but the proof is straightforward and we include it to make the current work self-contained.

\begin{proposition}
\label{balbyaut}
Suppose that~$M$ is a cancellative monoid and $\beta$ is an element in~$M$ such that 
for all~$x \in M$ there exists an element~${\auto}(x)$ satisfying~$\beta x = \auto(x)\beta$.  
If $\auto$ is surjective then $\beta$ is balanced.
\end{proposition}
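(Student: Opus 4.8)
The plan is to use cancellativity to upgrade the hypothesis ``$\beta x = \auto(x)\beta$ for all $x$'' into the statement that $\auto$ is an \emph{automorphism} of $M$ fixing $\beta$, and then to transport left divisors of $\beta$ to right divisors, and conversely, using $\auto$ together with the defining relation.

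First I would check that $\auto$ is a well-defined monoid endomorphism. Right-cancellativity makes $\auto(x)$ unique (from $\auto(x)\beta = \beta x = \auto'(x)\beta$), and cancelling $\beta$ in $\auto(xy)\beta = \beta xy = \auto(x)\beta y = \auto(x)\auto(y)\beta$ yields multiplicativity, while $\auto(1)=1$ follows similarly. Left-cancellativity after the relation (if $\auto(x)=\auto(x')$ then $\beta x = \beta x'$, so $x=x'$) shows $\auto$ is injective; combined with the surjectivity hypothesis, $\auto$ is an automorphism, and its inverse is again an endomorphism. Finally, cancelling in $\auto(\beta)\beta = \beta\beta$ gives $\auto(\beta)=\beta$, and hence $\auto^{-1}(\beta)=\beta$ as well.

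Write $L$ for the set of left divisors and $R$ for the set of right divisors of $\beta$; the goal is exactly $L=R$. Since $\auto$ and $\auto^{-1}$ are endomorphisms fixing $\beta$, applying them to a factorisation $\beta = zx$ gives $\beta = \auto(z)\auto(x)$ and $\beta = \auto^{-1}(z)\auto^{-1}(x)$, so $\auto(R)\subseteq R$ and $\auto^{-1}(R)\subseteq R$, whence $\auto(R)=R$. The crux is to relate $R$ and $L$. If $x\in R$, say $\beta = zx$, then substituting into $\beta x = \auto(x)\beta$ gives $zx^2 = \auto(x)zx$, and right-cancelling one $x$ yields $\beta = \auto(x)z$, so $\auto(x)\in L$; thus $\auto(R)\subseteq L$. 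Dually, if $x\in L$, say $\beta = xw$, then substituting into $x\beta = \beta\auto^{-1}(x)$ (which follows from the defining relation by replacing $x$ with $\auto^{-1}(x)$) gives $x^2 w = xw\,\auto^{-1}(x)$, and left-cancelling $x$ yields $\beta = w\,\auto^{-1}(x)$, so $\auto^{-1}(x)\in R$, i.e.\ $x\in\auto(R)$; thus $L\subseteq \auto(R)$. Combining, $L = \auto(R) = R$, which is precisely balancedness.

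The step I expect to require the most care is not any single computation but the bookkeeping of sides: each cancellation must be performed on the correct end, and one must genuinely invoke bijectivity of $\auto$ (through $\auto^{-1}$), since knowing only $\auto(R)\subseteq R$ would be insufficient to conclude $\auto(R)=R$ on a possibly infinite divisor set. Everything else is a routine consequence of the two cancellation laws.
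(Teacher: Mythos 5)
Your proof is correct and follows essentially the same route as the paper's: cancellativity upgrades~$\auto$ to an automorphism fixing~$\beta$, divisor sets are transported by~$\auto$ and~$\auto^{-1}$, and a substitution-and-cancellation computation in the defining relation~$\beta x = \auto(x)\beta$ converts right divisors into left divisors and back. The differences are only cosmetic: you stabilise the set of right divisors (proving~$\auto(R)=R$) where the paper establishes the equivalence~$x\cleq\beta\Leftrightarrow\auto(x)\cleq\beta$ for left divisors, and you make explicit the multiplicativity of~$\auto$ and the identity~$\auto(\beta)=\beta$, which the paper uses implicitly.
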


\begin{proof}
Suppose there exist~$x, y \in M$ satisfying~$\auto(x) = \auto(y)$. Then $\beta x = \beta y$ holds,
hence $x = y$ by left cancellation, so~$\auto$ is injective.
Thus~$\auto$ is an automorphism of~$M$.

For~$x \cleq \beta$, denote by~$\beta_x$
the unique
element of~$M$ satisfying~$x \  \beta_x = \beta$ (uniqueness comes from left cancellation).
Similarly, for~$\beta \cgeq x$ then write~$_x\beta \  x = \beta$.

Suppose~$x \cleq \beta$.  Then we have~$\beta = \auto(\beta) = \auto(x) \auto(\beta_x)$,
hence~$\auto(x)\cleq \beta$. 
By the same argument but using~$\auto^{-1}$ instead, we deduce that
$\auto(x)\cleq \beta$ implies~$x \cleq \beta$. Thus 
$x \cleq \beta$ holds precisely when~$\auto(x)\cleq \beta$ holds.
A symmetric argument shows that~$\beta \cgeq x$ holds precisely when~$\beta \cgeq \auto(x)$ holds.

So finally, suppose~$x \cleq \beta$, which implies~$\auto(x)\cleq \beta$.
Then we have
\[\auto(x)\beta_{\auto(x)} x = \beta x = \auto(x) \beta.\]
Left cancellation then gives~$\beta_{\auto(x)} x = \beta$, hence~$\beta \cgeq x$.
A similar argument shows that~$\beta \cgeq x$ implies~$x \cleq \beta$. 
Hence $\beta \cgeq x$ holds precisely when~$x \cleq \beta$ holds.
\end{proof}

\begin{proposition}
\label{bali}
The element~$\Lambda$ is balanced.
\end{proposition}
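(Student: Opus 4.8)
The plan is to apply Proposition~\ref{balbyaut} with $M=\BBo(e,e,r)$ and $\beta=\Lambda$. First I note that $M$ is cancellative: by Lemma~\ref{complete} the presentation $\PPo(e,e,r)$ is complete and complemented, so by the word-reversing theory recalled in Subsection~\ref{ss:basicsGarside} it is left cancellative and any two elements with a common right multiple have a right lcm; right cancellativity then follows by transporting this through the isomorphism of Lemma~\ref{MisomRevM}. It remains to exhibit a surjective map $\auto$ satisfying $\Lambda x=\auto(x)\Lambda$ for all $x\in M$. The candidate I would use is $\auto=\sua^{\,r}$, the $r$-th power of the diagram rotation, which sends $t_i\mapsto t_{i+r}$ and fixes each $s_p$; being a power of the automorphism $\sua$ it is an automorphism of $M$, hence in particular surjective, so once $\Lambda x=\auto(x)\Lambda$ is verified the proposition immediately yields that $\Lambda$ is balanced.

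Since $\auto$ is a homomorphism, it suffices to check $\Lambda a=\auto(a)\Lambda$ on the generators $a\in\TE\cup\SS$, after which it propagates to every $x=a_1\cdots a_k$ by pushing $\Lambda$ across one letter at a time. For $a=s_p$ this is exactly $\Lambda s_p=s_p\Lambda=\auto(s_p)\Lambda$, already recorded from the centrality of $\Delta_{B_{r-1}}$ in $\BBc(B_{r-1})$. For $a=t_i$ the content is the rotation identity $\Lambda t_i=t_{i+r}\Lambda$, which I would establish block by block along $\Lambda=\Lambda_2\Lambda_3\cdots\Lambda_r$. The first block shifts by two, by Lemma~\ref{tauUpsi}: $\Lambda_2 t_i=\tau t_i=t_{i+2}\tau=t_{i+2}\Lambda_2$. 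Each of the remaining blocks shifts by one,
\[\Lambda_p\,t_i=t_{i+1}\,\Lambda_p\qquad(3\le p\le r),\]
so a telescoping pass pushing $t_i$ leftward through $\Lambda_r,\Lambda_{r-1},\ldots,\Lambda_3$ and finally $\Lambda_2$ accumulates a total shift of $(r-2)\cdot 1+2=r$, giving $\Lambda t_i=t_{i+r}\Lambda$ as required.

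The heart of the argument, and the step requiring the most care, is the per-block identity $\Lambda_p t_i=t_{i+1}\Lambda_p$. For $p=3$ it reads $s_3\tau s_3 t_i=t_{i+1}s_3\tau s_3$, and I would prove it using only the defining relations: expand $\tau=t_{i+1}t_i$ by $(R_5)$, rewrite the tail via the braid relation $(R_3)$ in the form $t_i s_3 t_i=s_3 t_i s_3$, then rewrite the emergent head via $(R_3)$ again as $s_3 t_{i+1}s_3=t_{i+1}s_3 t_{i+1}$, and refold $t_{i+1}t_i=\tau$; the word produced is exactly $t_{i+1}s_3\tau s_3$. For general $p$ one reduces to this case: writing $\Lambda_p=(s_p\cdots s_4)(s_3\tau s_3)(s_4\cdots s_p)$, the commutations $(R_4)$ carry $s_4,\ldots,s_p$ through $t_i$ unchanged on both sides, so the $p=3$ computation applies to the inner factor while the outer letters reassemble into $\Lambda_p$. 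The subtlety is purely in the bookkeeping of the two braid moves: they must close up so that each $s$-block leaves a \emph{single} surviving index shift of $+1$ (the reason the global shift is $r$ and not $2(r-1)$), and the consistency of these moves is underwritten by the way $s_3$ braids with $\tau$ recorded in Lemma~\ref{gibraidrels}(b). I would stress that, for the conclusion, only the existence of a \emph{constant} shift is actually needed—its precise value $r$ is a bonus—since any constant shift already makes $\auto=\sua^{\,c}$ an automorphism and hence surjective.

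With $\Lambda a=\auto(a)\Lambda$ verified on all generators it holds for every $x\in M$, and Proposition~\ref{balbyaut} applies with the surjective automorphism $\auto=\sua^{\,r}$, yielding that $\Lambda$ is balanced.
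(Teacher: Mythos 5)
Your proof is correct and takes essentially the same route as the paper: the paper's proof establishes exactly your per-block identities---the computation $(s_3 \tau s_3)\, t_{i-1} = s_3 t_i t_{i-1} s_3 t_{i-1} = s_3 t_i s_3 t_{i-1} s_3 = t_i s_3 t_i t_{i-1} s_3 = t_i (s_3 \tau s_3)$ via the same $(R_5)$/$(R_3)$ moves, together with $\Lambda_2 t_{i-2} = t_i \Lambda_2$ and $\Lambda_p t_i^\sda = t_i \Lambda_p$ for $3 \leq p \leq r$---telescopes them to $\Lambda t_i = t_{i+r}\Lambda$, and then applies Proposition~\ref{balbyaut} to the automorphism $\auto$ fixing $\SS$ and sending $t_i \mapsto t_{i+r}$, which is precisely your $\sua^{\,r}$. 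Your additional explicitness about cancellativity (via completeness and Lemma~\ref{MisomRevM}) and your identification of $\auto$ as a power of the diagram rotation merely spell out what the paper leaves implicit.
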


\begin{proof}
Let~$i\in\Z/e$. From~$(s_3 \t s_3) t_{i-1} = s_3 t_{i} t_{i-1} s_3 t_{i-1} = s_3 t_{i} s_3 t_{i-1} s_3 = t_{i} s_3 t_{i} t_{i-1} s_3 = t_{i} (s_3 \t s_3)$, we deduce~$\Lambda_2 t_{i-2} = t_i \Lambda_2$ and~$\Lambda_p t_{i}^\sda = t_i \Lambda_p$ for~$3 \leq p \leq r$,
hence~$\Lambda t_i = t_{i+r}\Lambda$.

Defining~$\auto(s_p) = s_p$ and~$\auto(t_i) = t_{i+r}$ for~$i\in\Z/e$ gives rise to an automorphism~$\auto$
of the cancellative monoid~$\BBo(e,e,r)$ satisfying~$\Lambda x = \auto(x)\Lambda$.
The result then follows by Proposition~\ref{balbyaut}.
\end{proof}

\subsubsection{Proof of Theorem~\ref{th:garside}.}
\label{sss:conclusion}
We now have enough to complete the proof of Theorem~\ref{th:garside}, that $\BBo(e,e,r)$ is a Garside monoid:

\begin{proof}[Proof of Theorem~\ref{th:garside}]  On the one hand, by Lemma~\ref{complete}, $\BBo(e,e,r)$ admits a complemented and complete presentation. On the other hand, the element~$\Lambda_2 \Lambda_3 \cdots \Lambda_r$---which we will henceforth denote by~$\Delta$---is the Garside element of~$\BBo(e,e,r)$. Indeed, Proposition~\ref{bali} states that left and right divisors of~$\Delta$ coincide and Lemma~\ref{lambdaLcm} insures that~$\Delta$ is the lcm of the generators and, in particular, the set of divisors of~$\Delta$ generates~$\BBo(e,e,r)$. Now, invoking Theorem~\ref{garsideCriterium}, we obtain that~$\BBo(e,e,r)$ is a Garside monoid with Garside element~$\Delta = \Lambda_2 \Lambda_3 \cdots \Lambda_r$.\end{proof}

Hence $\BBo(e,e,r)$ embeds in the group~$\BB(e,e,r)$ defined by the same presentation. Furthermore, we have for free:

\medskip

\textbf{Proposition \ref{MonoidIsomorphism}.}
\emph{The submonoid of~$\BB(e,e,r)$ generated by~$\TE\cup\SS$ is isomorphic to the monoid~$\BBo(e,e,r)$, that is, it can be presented by~$\PPo(e,e,r)$ considered as a monoid presentation.
}

\subsection{ Structure of the lattice of simples in~$\BBo(e,e,r)$}
\label{ss:lattice}

Here we completely describe the structure of the lattice of simples in the Garside monoid~$\BBo(e,e,r)$. Though sometimes somewhat technical, our careful study leads to a clear statement (Theorem~\ref{Simples}) which fully explains the combinatorics of the Garside structure and which will allow the computation of several related numerical objects and then, in the next subsection, to appreciate how classical $\BBo(e,e,r)$ actually is.

Since the only relations from~$\PPo(e,e,r)$ which can be applied
to~$\Lambda_k$ correspond to applications of~$(R_5)$ to~$\tau$, there are only four types of {non-trivial} left divisors of~$\Lambda_k$:
\begin{itemize}
\item[(1)] $s_k \cdots s_3 \tau s_3 \cdots s_{j-1}s_{j}$,
\item[(2)] $s_k \cdots s_3 \tau$,
\item[(3)] $s_ k \cdots s_3 t_i$,  and 
\item[(4)] $s_k \cdots s_{j+1}s_j$,
\end{itemize}
with~ $3\leq j\leq k$ and $i$ in~$\Z/e$.

The following theorem together with the fact that
we know precisely the form of the divisors of~$\Lambda_k$ for each~$k$
allows us to have precise control over the simples.

\begin{theorem}\label{Simples} 
The simples in~$\BBo(e,e,r)$ are precisely the elements of the form~$p_2 \cdots p_r$ 
where~$p_k$ is a divisor of~$\Lambda_k$ for~$2\leq k\leq r$.
\end{theorem}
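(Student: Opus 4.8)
The plan is to prove the two inclusions separately, organising everything around the chain of parabolic submonoids
$$M_2\subseteq M_3\subseteq\cdots\subseteq M_r=\BBo(e,e,r),$$
where $M_k:=\BBo(e,e,k)$ is the submonoid generated by $\TE\cup\{s_3,\dots,s_k\}$, with (by Theorem~\ref{th:garside} applied at rank~$k$, and by the parabolic sub-presentations of Subsection~\ref{sss:parabolic}) Garside element $\Delta_k=\Lambda_2\cdots\Lambda_k$, so that $\Delta_k=\Delta_{k-1}\Lambda_k$ and $\Lambda_k$ is exactly the right complement $\Delta_{k-1}\backslash\Delta_k$. The whole statement then follows by induction on~$r$ from a single \emph{peeling lemma}: every simple $s$ of $M_r$ factors uniquely as $s=s'\,p_r$ with $s'$ a simple of $M_{r-1}$ and $p_r\cleq\Lambda_r$, and conversely every such product is a simple of $M_r$. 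Granting this, the inclusion ``simples $\subseteq$ products'' follows by peeling $p_r$ and applying the induction hypothesis to $s'\in M_{r-1}$, while ``products $\subseteq$ simples'' is the converse half; the base case $r=2$ is the statement that the simples of $M_2=\BBd(I_2(e))$ are the divisors of $\tau=\Lambda_2$, which is immediate from Remark~\ref{R:dihedral}.

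First I would record the parabolic facts that make the induction legitimate. The sub-presentation defining $M_{r-1}$ is obtained from $\PPo(e,e,r)$ by discarding $s_r$, and it is again complemented and complete (Lemma~\ref{complete} restricts). Consequently, reversing two positive words over $\TE\cup\{s_3,\dots,s_{r-1}\}$ never introduces the letter~$s_r$, so a complement computed in $M_r$ between elements of $M_{r-1}$ already lies in $M_{r-1}$. This gives that left-divisibility in $M_{r-1}$ is the restriction of that in $M_r$, and hence that an element of $M_{r-1}$ dividing $\Delta_r$ already divides $\Delta_{r-1}$, i.e.\ the simples of $M_r$ lying in $M_{r-1}$ are exactly the simples of $M_{r-1}$. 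I will use this together with left/right cancellativity and the recursion $\Lambda_k=s_k\Lambda_{k-1}s_k$ (which also shows that $s_p$ commutes with $\Lambda_k$ whenever $p\geq k+2$).

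The heart of the argument is the peeling lemma. For existence I would take $p_r$ to be the maximal common right divisor of $s$ and $\Lambda_r$ and write $s=s'p_r$; the claim is that $s'\in M_{r-1}$ and $s'\cleq\Delta_{r-1}$. Here the explicit four-type description of the divisors of $\Lambda_r$ is essential: every nontrivial divisor begins with the descending run $s_r s_{r-1}\cdots$, and $s_r$ occurs in $\Delta_r$ only inside $\Lambda_r$, so the maximal right $\Lambda_r$-divisor $p_r$ absorbs every occurrence of $s_r$, leaving $s'$ free of $s_r$, i.e.\ $s'\in M_{r-1}$; the parabolic fact above then upgrades $s'\cleq\Delta_r$ to $s'\cleq\Delta_{r-1}$. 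Uniqueness follows because any admissible $p_r$ is a right divisor of $s$ dividing $\Lambda_r$, hence a right divisor of the maximal one, and a strictly smaller choice would leave a residual $s_r$ in the corresponding $s'$. For the converse I would show that $s'\cleq\Delta_{r-1}$ and $p_r\cleq\Lambda_r$ imply $s'p_r\cleq\Delta_r$: writing $\Delta_{r-1}=s'u$ and cancelling $s'$ reduces this to $p_r\cleq u\Lambda_r$, for which it suffices that $\Lambda_r\cleq u\Lambda_r$; since $u\in M_{r-1}$ commutes with $s_r$ and $\Lambda_r=s_r\Lambda_{r-1}s_r$, two cancellations reduce this in turn to the inequality $\Lambda_{r-1}\cleq u\Lambda_{r-1}$ for $u\cleq\Delta_{r-1}$, an assertion of the same shape one rung down that I would settle by a parallel induction on~$k$ (its base case, $\tau\cleq u\tau$ for $u\cleq\tau$, being a short $(R_5)$-computation).

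I expect the main obstacle to be exactly these two bookkeeping claims — that extracting the maximal right $\Lambda_r$-divisor removes \emph{all} of the $s_r$-content, so that the residue is genuinely parabolic, and the dual inequality $\Lambda_{k-1}\cleq u\Lambda_{k-1}$ — since both require pushing the explicit shapes (1)--(4) of the divisors of $\Lambda_k$ through the relations $(R_3)$, $(R_5)$ and the braid relations in a controlled way. This is the ``somewhat technical'' core the subsection warns about; I would organise it as a finite case analysis on the type of~$p_r$, and (for the dual inequality) an induction on~$k$ exploiting $\Lambda_k=s_k\Lambda_{k-1}s_k$ and the commutation of $s_k$ with the low-index generators, verifying each case by an explicit word-reversing computation of the kind displayed in Figures~\ref{C:case001b} and~\ref{C:case001a}.
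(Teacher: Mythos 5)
Your plan shares the paper's skeleton (an induction peeling one $\Lambda_k$-factor at a time, plus a technical lemma pushing a generator through a right divisor $q_k$ of $\Lambda_k$), but the two steps you dismiss as ``bookkeeping'' are exactly the ones that fail as written. In the existence half of your peeling lemma you take $p_r$ to be the maximal common right divisor of $s$ and $\Lambda_r$. That is a \emph{right} divisor of $\Lambda_r$, whereas the theorem---and your own condition $p_r\cleq\Lambda_r$---requires a \emph{left} divisor, and the two classes differ because $\Lambda_r$ is not balanced: already for $r=3$ the left divisors of $\Lambda_3=s_3\tau s_3$ are $1,\ s_3,\ s_3t_i,\ s_3\tau,\ \Lambda_3$, while its right divisors are $1,\ s_3,\ t_is_3,\ \tau s_3,\ \Lambda_3$. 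Worse, the construction collapses on the simple $s=s_3t_i$ (it left-divides $\Lambda_3$, and $\Lambda_3\cleq\Delta$ since $\Delta=\Lambda_2\Lambda_3=\Lambda_3\Lambda_2^{\sda}$ by Lemma~\ref{KeyLemma}): the only right divisors of $s$ are $1$, $t_i$ and $s_3t_i$, of which only $1$ right-divides $\Lambda_3$, so your $p_r$ is trivial while $s$ is not $s_3$-free---the maximal right $\Lambda_r$-divisor does \emph{not} absorb the $s_r$-content. (If you instead meant the maximal right divisor of $s$ that \emph{left}-divides $\Lambda_r$, you must first prove such a maximum exists; that is not automatic.) The paper sidesteps this by peeling with \emph{left} gcds: it sets $p_2\cdots p_{k-1}=p\wedge(\Lambda_2\cdots\Lambda_{k-1})$, then $p_k=p_0\wedge\Lambda_k$ for the remaining piece $p_0$, and kills the residue $p'=ap'_a$ by Lemma~\ref{TechLemma}: from $aq_k=q_kb$ one gets $p_ka=b^{\sua}p_k$, hence $p_2\cdots p_{k-1}b^{\sua}\cleq\Lambda_2\cdots\Lambda_{k-1}$, contradicting the maximality of the gcd.

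Your converse direction relies on the assertion that $u\in M_{r-1}$ commutes with $s_r$, which is false: $s_{r-1}$ does not commute with $s_r$. What is true---and is precisely Lemma~\ref{KeyLemma}---is the twisted commutation $x\Lambda_k=\Lambda_kx^{\sda}$ for every $x$ with $\Ht(x)<k$, proved by explicit braid-relation computations; it yields $\Lambda_r\cleq u\Lambda_r$ in one line (making your auxiliary induction on $k$ unnecessary) and simultaneously gives the whole easy inclusion, since pushing the complements $q_k$ to the right produces $\Delta=p_2\cdots p_r\,q_rq'_{r-1}\cdots q'_2$. Finally, your ``parabolic fact'' that an element of $M_{r-1}$ dividing $\Delta_r$ already divides $\Delta_{r-1}$ does not follow from completeness of the sub-presentation alone: reversing $x^{-1}\Delta_r$ genuinely involves $s_r$, and the implication is essentially the hard inclusion of the theorem one rank down combined with the interaction of $M_{r-1}$ with $\Lambda_r$---exactly what Lemmas~\ref{KeyLemma} and~\ref{TechLemma} are engineered to control---so assuming it makes the induction circular. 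The architecture is salvageable (it is, in essence, the paper's), but the three load-bearing claims---the right-gcd peeling, the commutation with $s_r$, and the parabolic divisibility transfer---must be replaced by the left-gcd extraction, Lemma~\ref{KeyLemma}, and the case analysis of Lemma~\ref{TechLemma}, respectively.
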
 

Define the polynomial $P^\oplus_{(e,e,r)}(q) = \sum a_n q^n$ where $a_n$ is the number of  length~$n$ simples in~$\BBo(e,e,r)$.
This polynomial is discussed in more detail in Subsection~\ref{sss:Poincare}.
Theorem~\ref{Simples} directly gives a factorization of it.

\begin{corollary} 
\label{corpp} We have:
$$\displaystyle P^\oplus_{(e,e,r)}(q)
=\prod_{k=2}^{r}(1+q+\cdots+q^{k-2}+eq^{k-1}+q^{k}+\cdots+q^{2k-2}).$$
\end{corollary}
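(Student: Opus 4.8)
The plan is to read the corollary as a length-refined count of the simples, using Theorem~\ref{Simples} to turn the set of simples into a product of independent local choices. First I would record that the presentation $\PPo(e,e,r)$ is homogeneous, so word length descends to a well-defined additive length function $|\cdot|$ on $\BBo(e,e,r)$; in particular $|p_2\cdots p_r|=\sum_{k=2}^r|p_k|$. By Theorem~\ref{Simples} every simple is a product $p_2\cdots p_r$ with $p_k\cleq\Lambda_k$, and conversely every such product is simple. Writing $D_k(q)=\sum_{p\,\cleq\,\Lambda_k}q^{|p|}$ for the length-generating polynomial of the divisors of $\Lambda_k$, the map $(p_2,\dots,p_r)\mapsto p_2\cdots p_r$ is therefore a length-additive surjection from $\prod_k\{p:p\cleq\Lambda_k\}$ onto the simples, and once it is known to be a bijection we get at once
$$P^\oplus_{(e,e,r)}(q)=\prod_{k=2}^r D_k(q).$$

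The step that needs care --- and the main obstacle --- is the injectivity of this map, i.e.\ the \emph{uniqueness} of the factorization $p_2\cdots p_r$; Theorem~\ref{Simples} as stated only guarantees surjectivity. I would extract uniqueness from the normal-form structure underlying the proof of that theorem: the factors $p_k$ form a canonical (left-greedy) decomposition, mirroring the unique mixed normal form of the divisors of the Garside element $\Delta_{B_{r-1}}$ transported through the embedding $\hom\colon\BBc(B_{r-1})\hookrightarrow\BBo(e,e,r)$ of Subsection~\ref{sss:folding}, the only new feature being the $\Z/e$-fold branching that refines the single first factor $\tau=\hom(q_1)$ into the generators $t_i$. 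Granting this, distinct admissible tuples represent distinct elements and the displayed product formula is exact.

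It then remains to compute each factor $D_k(q)$, which is routine bookkeeping from the explicit list of left divisors of $\Lambda_k$ recalled before Theorem~\ref{Simples}. Besides the trivial divisor (length $0$), the four families contribute: type~(4), $s_k\cdots s_j$ for $3\le j\le k$, has length $k-j+1$ and sweeps out $q+\cdots+q^{k-2}$; type~(3), $s_k\cdots s_3 t_i$ for $i\in\Z/e$, has length $k-1$ and contributes $e\,q^{k-1}$; type~(2), $s_k\cdots s_3\tau$, has length $k$ and contributes $q^k$; and type~(1), $s_k\cdots s_3\tau s_3\cdots s_j$ for $3\le j\le k$, has length $k+j-2$ and sweeps out $q^{k+1}+\cdots+q^{2k-2}$ (using $|\tau|=2$ and $|\Lambda_k|=2k-2$). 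Summing gives
$$D_k(q)=1+q+\cdots+q^{k-2}+e\,q^{k-1}+q^k+\cdots+q^{2k-2},$$
which is exactly the $k$-th factor in the corollary; the degenerate base case $k=2$ (where types~(1) and~(4) are empty) reduces correctly to $D_2(q)=1+e\,q+q^2$, since the divisors of $\Lambda_2=\tau$ are $1$, the $e$ generators $t_i$, and $\tau$ itself. Taking the product over $2\le k\le r$ yields the claimed formula.
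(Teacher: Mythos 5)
Your proposal is correct and is essentially the paper's own argument: the paper offers no separate proof, stating that Theorem~\ref{Simples} ``directly gives'' the factorization once one knows the explicit list of left divisors of~$\Lambda_k$ recalled just before that theorem, and your length bookkeeping for each factor $D_k(q)$ (trivial divisor, type~(4) giving $q+\cdots+q^{k-2}$, type~(3) giving $eq^{k-1}$, type~(2) giving $q^k$, type~(1) giving $q^{k+1}+\cdots+q^{2k-2}$, with the degenerate case $k=2$) reproduces exactly that count of $2(k-1)+e$ divisors. The injectivity issue you flag is genuine and is glossed over in the paper, but the mechanism you need is already contained in the paper's proof of Theorem~\ref{Simples}, whose induction exhibits the decomposition as canonical via gcds ($p_2\cdots p_{k-1}=p\wedge\Lambda_2\cdots\Lambda_{k-1}$ and $p_k=p_0\wedge\Lambda_k$) --- this is precisely the left-greedy uniqueness you invoke, so your further appeal to transporting normal forms through the embedding $\hom\colon\BBc(B_{r-1})\hookrightarrow\BBo(e,e,r)$ is unnecessary and in fact unavailable as stated, since (by additivity of length under the homogeneous presentation) a simple such as a single $t_i$ does not lie in the image of~$\hom$.
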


\begin{corollary} The number of simples in~$\BBo(e,e,r)$ is
\[P^\oplus_{(e,e,r)}(1)=\prod_{k=2}^r(2(k-1)+e)=\frac{(2(r-1)+e)!!}{e!!},\]
where the notation~$n!!$ represents the product~$n(n-2)\cdots4\cdot2$ for~$n$ even and the product~$n(n-2)\cdots5\cdot3$ for~$n$ odd (see~\cite[sequences~A000165 and~A001147]{slo}).
\end{corollary}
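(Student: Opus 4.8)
The plan is to derive this directly from Corollary~\ref{corpp} by specialising the variable to $q=1$. By construction $P^\oplus_{(e,e,r)}(1)=\sum_n a_n$ is exactly the total number of simples, so once the factorisation of Corollary~\ref{corpp} is granted, the entire argument is a routine evaluation followed by a reindexing; there is no genuine obstacle, only two small points of bookkeeping.

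First I would evaluate the generic factor of Corollary~\ref{corpp} at $q=1$. The factor $1+q+\cdots+q^{k-2}+eq^{k-1}+q^{k}+\cdots+q^{2k-2}$ splits into three blocks: the initial run $1+q+\cdots+q^{k-2}$ has $k-1$ terms, the middle term $eq^{k-1}$ contributes $e$, and the final run $q^{k}+\cdots+q^{2k-2}$ again has $k-1$ terms. At $q=1$ every power becomes $1$, so the factor evaluates to $(k-1)+e+(k-1)=2(k-1)+e$. Taking the product over $k$ from $2$ to $r$ yields the first claimed equality $P^\oplus_{(e,e,r)}(1)=\prod_{k=2}^{r}(2(k-1)+e)$.

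For the second equality I would set $m=k-1$ and rewrite the product as $\prod_{m=1}^{r-1}(2m+e)=(e+2)(e+4)\cdots(e+2(r-1))$, a product of $r-1$ consecutive terms in an arithmetic progression of step $2$ starting at $e+2$. I would then recognise this as the telescoping quotient $\frac{(2(r-1)+e)!!}{e!!}$: both double factorials are products that step downward by $2$, so dividing cancels all factors from the bottom up through $e$ and leaves precisely $(e+2)(e+4)\cdots(2(r-1)+e)$.

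The one point meriting care---and the nearest thing to a subtlety---is that this double-factorial identity must hold uniformly in the parity of $e$. When $e$ is even the relevant factors are the even numbers down to $2$, and when $e$ is odd they are the odd numbers down to $3$; in either case $2(r-1)+e$ shares the parity of $e$, so the cancellation in $\frac{(2(r-1)+e)!!}{e!!}$ takes the same form, consistent with the two OEIS sequences cited in the statement.
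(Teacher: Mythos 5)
Your proposal is correct and matches the paper's own (implicit) argument: the corollary follows from Corollary~\ref{corpp} simply by setting $q=1$, where each factor $1+q+\cdots+q^{k-2}+eq^{k-1}+q^k+\cdots+q^{2k-2}$ evaluates to $2(k-1)+e$, and the product telescopes against $e!!$ exactly as you describe. Your parity check on $e$ and $2(r-1)+e$ is the right point to verify, and it goes through since both have the same parity.
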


{\bf Remark.} For~$q\neq 1$, we find
\begin{eqnarray*}
P^\oplus_{(e,e,r)}(q)
&=&\prod_{k=2}^{r}\frac{q^{2k-1}+(e-1)q^k-(e-1)q^{k-1}-1}{q-1}.
\end{eqnarray*}

For instance, Figure~\ref{C:latticeB333} displays the lattice of simples in~$\BBo(3,3,3)$. We find:
\begin{eqnarray*}
P^\oplus_{(3,3,3)}(q)&=&1+4q+7q^2+11q^3+7q^4+4q^5+q^6\\
&=&(1+3q+q^2)(1+q+3q^2+q^3+q^4).
\end{eqnarray*}

\begin{figure}[htb]
	\begin{center}
		\includegraphics[height=190pt]{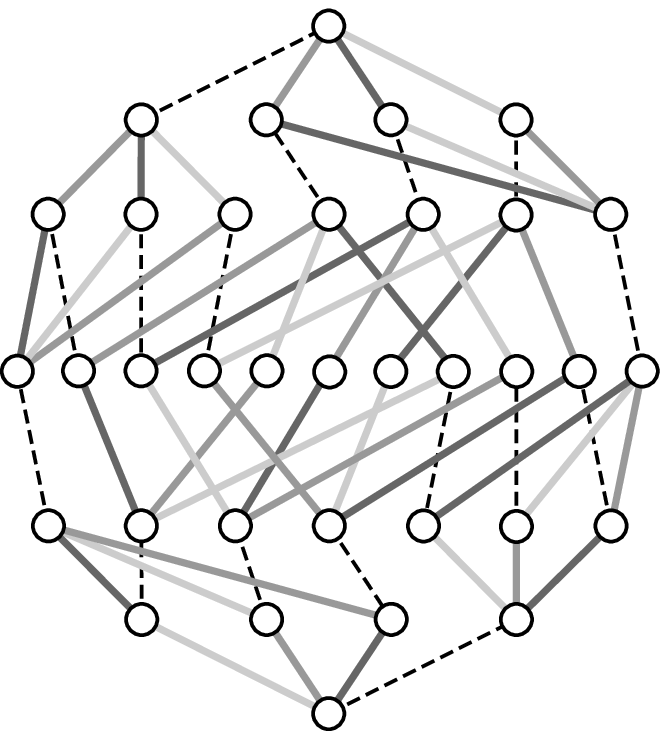}
		\put(-86.7,-6){\makebox(0,0){$1$}}
		\put(-60,26){\makebox(0,0){$t_2$}}
		\put(-92,25){\makebox(0,0){$t_1$}}
		\put(-139,20){\makebox(0,0){$t_0$}}
		\put(-170,50){\makebox(0,0){$\Lambda_2$}}%
		\put(-1,140){\makebox(0,0){$\Lambda_3$}}%
		\put(-30,20){\makebox(0,0){$s_3^{}$}}
		\put(-86.7,196){\makebox(0,0){$\Delta$}}
	\end{center}
\caption{The lattice of simples in~$\BBo(3,3,3)$.}
\label{C:latticeB333}
\end{figure}

To prove Theorem~\ref{Simples}, we will need the two lemmas below, which make use of the notation of the \emph{height} of an element of~$\BBo(e,e,r)$ :

Define a map~$\Ht: (\TE\cup\SS)^* \rightarrow\{1,2, \ldots, r\}$ on generators by~$\Ht(t_i) = 2$, $\Ht(s_q) = q$, and, for~$w = a_1 \cdots a_k$,  define $\Ht(w) = \max_{i=1}^k \Ht(a_i)$.  Define the height of the empty word to be~$\Ht(\varepsilon) = 1$. If $\rho_1 = \rho_2$ is a defining relation of~$\BBo(e,e,r)$, it is clear from inspection that $\Ht(\rho_1) =  \Ht(\rho_2)$ holds. Thus $\Ht(w)$ only depends on the element in~$\BBo(e,e,r)$ represented by~$w$. So the \emph{height} map~$\Ht: \BBo(e,e,r)\rightarrow\{1,2, \ldots, r\}$ is well-defined.

\begin{lemma}\label{KeyLemma} Every~$x \in \BBo(e,e,r)$ with~$\Ht(x) < k$ satisfies
$$x \Lambda_k=\Lambda_k x^\sda.$$
\end{lemma}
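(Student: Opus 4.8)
The plan is to reduce the statement to the case where $x$ is a single generator, and then to treat separately the two families of generators of height below $k$: the $t_i$ (of height $2$) and the $s_q$ with $3 \leq q \leq k-1$. The reduction is immediate, because $\sda$ is a monoid automorphism: if $a\Lambda_k = \Lambda_k a^\sda$ and $b\Lambda_k = \Lambda_k b^\sda$, then $(ab)\Lambda_k = a\Lambda_k b^\sda = \Lambda_k a^\sda b^\sda = \Lambda_k (ab)^\sda$. Since any $x$ with $\Ht(x)<k$ is represented by a word all of whose letters are generators of height $<k$ (height being well defined on $\BBo(e,e,r)$, so every letter of a chosen representative has height at most $\Ht(x)<k$), an induction on word length completes this step. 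When $k=2$ there are no generators of height $<2$, so only $x=1$ occurs and the claim is trivial; thus I assume $k \geq 3$.

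For the generators $t_i$, I would simply invoke the computation already carried out in the proof of Proposition~\ref{bali}, which establishes $\Lambda_p\,t_i^\sda = t_i\,\Lambda_p$ for all $3 \leq p \leq r$; rearranged, this is exactly $t_i \Lambda_k = \Lambda_k t_i^\sda$. If one prefers a self-contained derivation, write $\Lambda_k = (s_k\cdots s_4)\,\Lambda_3\,(s_4\cdots s_k)$ with $\Lambda_3 = s_3\tau s_3$, push $t_i$ rightward through the commuting block $s_k\cdots s_4$ using relation~$(R_4)$, apply the identity $t_i(s_3\tau s_3) = (s_3\tau s_3)t_{i-1}$ in the middle, and push the resulting $t_{i-1}$ out through $s_4\cdots s_k$ to the right.

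The substantive part is the commutation $s_q\Lambda_k = \Lambda_k s_q$ for $3 \leq q \leq k-1$, which I would prove by induction on $k$ using the recursion $\Lambda_k = s_k \Lambda_{k-1} s_k$. When $q \leq k-2$ the generator $s_q$ commutes with $s_k$ by $(R_2)$ and with $\Lambda_{k-1}$ by the inductive hypothesis, so it slides straight through $s_k\Lambda_{k-1}s_k$. The delicate case, and the main obstacle, is $q=k-1$: here $s_{k-1}$ commutes with neither the outer letters $s_k$ of $\Lambda_k$ nor the outer letters $s_{k-1}$ of $\Lambda_{k-1}$. I would resolve it by expanding one further step, $\Lambda_k = s_k s_{k-1}\,\Lambda_{k-2}\,s_{k-1}s_k$, and then alternately applying the braid relation $(R_1)$ (in the forms $s_{k-1}s_k s_{k-1}=s_k s_{k-1}s_k$ and $s_k s_{k-1}s_k = s_{k-1}s_k s_{k-1}$) together with the commutation of $s_k$ with $\Lambda_{k-2}$ (valid since $s_k$ commutes with each letter $s_3,\dots,s_{k-2}$ of $\Lambda_{k-2}$ by $(R_2)$ and with $\tau$ by $(R_4)$, as $k\geq 4$ in this case). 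Tracking the letters carefully lets one repackage the result back into $s_k\Lambda_{k-1}s_k = \Lambda_k$ with a single $s_{k-1}$ pushed to the far right, yielding $s_{k-1}\Lambda_k = \Lambda_k s_{k-1}$. Combining the two families of generators with the reduction step of the first paragraph then gives the lemma.
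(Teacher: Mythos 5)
Your proof is correct and takes essentially the same route as the paper: the paper also reduces to generators (concluding by multiplicativity of~$\sda$), obtains $t_i\Lambda_k=\Lambda_k t_{i-1}$ by the same manipulation $\tau=t_it_{i-1}$ with two applications of~$(R_3)$ that underlies Proposition~\ref{bali}, and proves $s_j\Lambda_k=\Lambda_k s_j$ by one explicit computation. Your induction via $\Lambda_k=s_k\Lambda_{k-1}s_k$ is merely a repackaging of that computation: unfolded, the case $q\leq k-2$ is the paper's sliding of $s_q$ past $s_k\cdots s_{q+2}$, and your $q=k-1$ step is exactly the paper's braiding $s_qs_{q+1}s_q=s_{q+1}s_qs_{q+1}$ followed by pushing $s_{q+1}$ (which commutes with $\tau$ since $q+1\geq4$) through the core and braiding back.
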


\begin{proof} For~$j<k$, we find:
\begin{eqnarray*}
s_j\Lambda_k
&=&{ s_j}s_k\cdots s_{j+1}s_js_{j-1}\cdots s_3\tau s_3\cdots s_k\\
&=&s_k\cdots s_{j+2}{ s_j}s_{j+1}s_js_{j-1}\cdots s_3\tau s_3\cdots s_k\\
&=&s_k\cdots s_{j+2}{ s_{j+1}s_js_{j+1}}s_{j-1}\cdots s_3\tau s_3\cdots s_k\\
&=&s_k\cdots s_3{ s_{j+1}}\tau s_3\cdots s_k\\
&=&s_k\cdots s_3\tau{ s_{j+1}}s_3\cdots s_{j-1}s_js_{j+1}\cdots s_k\\
&=&s_k\cdots s_3\tau s_3\cdots s_{j-1}{ s_{j+1}}s_js_{j+1}\cdots s_k\\
&=&s_k\cdots s_3\tau s_3\cdots s_{j-1}{ s_js_{j+1}s_j}s_{j+2}\cdots s_k\\
&=&s_k\cdots s_3\tau s_3\cdots s_{j-1}{ s_js_{j+1}s_j}s_{j+2}\cdots s_k\\
&=&s_k\cdots s_3\tau s_3\cdots s_{j-1}s_js_{j+1}s_{j+2}\cdots s_k{ s_j}\\
&=&\Lambda_ks_j.
\end{eqnarray*}
For~every~$k$ and~$0\leq i<e$, we have:
\begin{eqnarray*}
t_i\Lambda_k
&=&{ t_i}s_k\cdots s_3\tau s_3\cdots s_k\\
&=&s_k\cdots s_4{ t_i}s_3{ t_it_{i-1}}s_3\cdots s_k\\
&=&s_k\cdots s_4{ s_3t_is_3}t_{i-1}s_3\cdots s_k\\
&=&s_k\cdots s_3t_i{ t_{i-1}s_3t_{i-1}}s_4\cdots s_k\\
&=&s_k\cdots s_3t_it_{i-1}s_3s_4\cdots s_k{ t_{i-1}}\\
&=&\Lambda_kt_{i-1}.
\end{eqnarray*}
The result follows.\end{proof}

Recall that~$a\vee b$ denotes the lcm of~$a$ and~$b$ and that, by cancellativity, elements~$(a\backslash b)$ and~$(b\backslash a)$ are uniquely defined by:
$$a \vee b =a(a\backslash b)=b(b\backslash a).$$

\begin{lemma}\label{TechLemma} Let~$a$ be an element in~$\TE\cup\SS$ and~$q_k$ be a right divisor of some~$\Lambda_k$. Then~$a\wedge q_k=1$ and~$\Ht(q_k\backslash a)<k$ together imply~$q_k\backslash a\in\TE\cup\SS$ and~$a\backslash q_k=q_k$.\end{lemma}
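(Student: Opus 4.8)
The plan is to reduce the statement to the single assertion $a\backslash q_k=q_k$, and then to establish that assertion by sliding the generator $a$ rightwards through $q_k$, using the hypothesis on heights to discard the ``bad'' slides.

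First I would record the reduction. Since $\PPo(e,e,r)$ is homogeneous, word length descends to a well-defined function on $\BBo(e,e,r)$. Suppose we have shown $a\backslash q_k=q_k$. Then $a\vee q_k=a(a\backslash q_k)=a\,q_k$, while also $a\vee q_k=q_k(q_k\backslash a)$; comparing lengths in $a\,q_k=q_k(q_k\backslash a)$ forces $|q_k\backslash a|=1$, that is, $q_k\backslash a\in\TE\cup\SS$. Thus both conclusions follow from $a\backslash q_k=q_k$, so it suffices to prove the latter, equivalently $a\vee q_k=a\,q_k$. Note also that $a\wedge q_k=1$, $a$ being an atom, means precisely $a\not\cleq q_k$, whence $q_k\backslash a\neq 1$.

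Next I would make explicit the shape of the right divisors $q_k$ of $\Lambda_k$. Mirroring the four types of left divisors listed just before the theorem (equivalently, transporting them through the anti-isomorphism $\rev$ of Lemma~\ref{MisomRevM}, or using the recursion $\Lambda_k=s_k\Lambda_{k-1}s_k$), every nontrivial right divisor of $\Lambda_k$ has one of the forms $s_j\cdots s_k$, $t_i\,s_3\cdots s_k$, $\tau\,s_3\cdots s_k$, or $s_j\cdots s_3\,\tau\,s_3\cdots s_k$, with $3\le j\le k$ and $i\in\Z/e$. In particular each such $q_k$ either is a tail $s_j\cdots s_k$ or carries the full tail $s_3\cdots s_k$ on its right. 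Then, for a generator $a$ with $a\not\cleq q_k$, I would compute $a\vee q_k$ by sliding $a$ rightwards through $q_k$ using $(R_3)$, $(R_4)$, $(R_5)$ and the braid relations on the $s_p$. The outcome is a dichotomy: either the slide collapses cleanly --- $a$ commutes past the letters it meets, or triggers a single braid that is then absorbed by a commutation (the prototype being $s_3\,q_k=s_3t_is_3s_4\cdots s_k=(t_is_3t_i)s_4\cdots s_k=t_is_3\cdots s_k\,t_i=q_k\,t_i$ for $q_k=t_is_3\cdots s_k$) --- and $a$ re-emerges as a single generator $b$ with $a\,q_k=q_k\,b$; or else the slide braids with the trailing $s_k$ and propagates, so that the complement $q_k\backslash a$ acquires an $s_k$ and $\Ht(q_k\backslash a)\ge k$. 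The hypothesis $\Ht(q_k\backslash a)<k$ excludes the second alternative, as well as the cases $a=s_m$ with $m\ge k+1$ (where $a$ either survives as $s_m$ or braids with $s_k$, in both cases into height $\ge k$). In each surviving case we have $a\,q_k=q_k\,b$ with $b\in\TE\cup\SS$, hence $q_k\cleq a\,q_k$; since the lcm gives $a\vee q_k=q_k(q_k\backslash a)\cleq a\,q_k=q_k\,b$, left-cancelling $q_k$ yields $q_k\backslash a\cleq b$, and as $q_k\backslash a\neq 1$ while $|b|=1$ this forces $q_k\backslash a=b$ and $a\vee q_k=a\,q_k$, i.e. $a\backslash q_k=q_k$.

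The main obstacle is the control of braid-induced length growth during the slide: a priori $a$ can fatten into a word of length $>1$ as it passes a braiding letter, and one must show that whenever the final complement stays below height $k$ no such fattening survives, so that $a$ returns as a single generator and the whole of $q_k$ is recovered as $a\backslash q_k$. Making the dichotomy precise --- that the only escape from a clean slide is the creation of an $s_k$, detected exactly by the height reaching $k$ --- is the crux; it is here that the tail structure $s_3\cdots s_k$ of $q_k$ and the commutation of low-height elements past the $\tau$-block (Lemma~\ref{KeyLemma}) do the real work.
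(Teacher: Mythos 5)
Your strategy is the same as the paper's: enumerate the possible shapes of a right divisor~$q_k$ of~$\Lambda_k$ (your four forms agree with the paper's three, since $\tau s_3\cdots s_k$ is the $j=3$ instance of $s_{j-1}\cdots s_3\tau s_3\cdots s_k$), use the two hypotheses to discard bad atoms~$a$, and exhibit for each surviving atom an identity $a\,q_k=q_k\,b$ with~$b$ a single generator. Your opening reduction and your closing argument are correct, and the latter is in fact more careful than the paper, which simply asserts $a\vee q_k=aq_k=q_ka'$: your observation that $q_kb$ is a common right multiple, so that $q_k\backslash a\cleq b$, combined with $q_k\backslash a\neq 1$ (from $a\wedge q_k=1$ and~$a$ an atom), cleanly forces $q_k\backslash a=b$ and hence $a\backslash q_k=q_k$ by cancelling~$a$ in $a\vee q_k=aq_k$.

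However, the heart of the lemma is missing. Your ``dichotomy''---either $a$ slides through cleanly and re-emerges as one generator, or the complement acquires an~$s_k$ and reaches height~$\geq k$---is precisely what has to be proved, and you state it as a claim, explicitly deferring it as ``the crux''. The paper's proof consists exactly of this finite verification: for $q_k=s_j\cdots s_k$, \resp $t_is_3\cdots s_k$, \resp $s_{j-1}\cdots s_3\tau s_3\cdots s_k$, the hypothesis $a\wedge q_k=1$ rules out the leading atom, and the height hypothesis rules out the remaining ``pivot'' atom ($s_{j-1}$, \resp $t_m$ with $m\neq i$, \resp $s_j$) via lcm computations such as $s_{j-1}\vee q_k=q_k\,s_{j-1}q_k$, whose complement $s_{j-1}q_k$ has height~$k$; every other atom of height~$\leq k$ is then checked to satisfy $aq_k=q_ka'$ with $a'\in\TE\cup\SS$ by explicit use of the relations $(R_1)$--$(R_5)$ (your prototype $s_3\,t_is_3\cdots s_k=t_is_3\cdots s_k\,t_i$ is one of these listed cases, and your disposal of $a=s_m$ for $m\geq k+1$ is consistent with it). Nothing in your plan would fail---the verification is finite and routine---but as written the central step is an assertion, not a proof. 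One small correction: Lemma~\ref{KeyLemma} does not do the work here; it concerns commutation with the full element~$\Lambda_k$ (namely $x\Lambda_k=\Lambda_kx^\sda$) and enters only in the proof of Theorem~\ref{Simples}, whereas the computations for proper right divisors~$q_k$ must be, and in the paper are, carried out directly.
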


\begin{proof} 
If $q_k$ is trivial, then the result follows directly. Consider the remaining cases:
\begin{enumerate}
\item[(1)] Let~$q_{k} = s_j \cdots s_{k}$ for some~$3 \leq j \leq k$. 
\newline
First, $a \not \preccurlyeq q_{k}$ implies~$a\neq s_j$. Next, $s_{j-1}\vee q_{k}=q_{k} s_{j-1} q_{k}$
(\resp $t_i\vee q_{k} =  q_{k} t_i q_{k}$ for~$j=3$) and~$\Ht(q_{k}) =k$
imply~$a\neq s_{j-1}$ (\resp $a\neq t_i$ for~$j=3$). The only possible cases are then:
$$a\vee q_{ k}= \left\{ 
\begin{array}{llll}
s_l q_{k} & = & q_{k} s_l  & \mbox{ for } a = s_l \mbox{ with } 3 \leq l < j-1, \\
s_l q_{k} & = & q_{k} s_{l-1}  & \mbox{ for } a = s_l \mbox{ with } j < l \leq k,\mbox{ and }\\
t_i q_{k} & = & q_{k} t_i  & \mbox{ for } a = t_i \mbox{ and } j \neq 3.
\end{array}
\right.$$
\item[(2)] Let~$q_{k} = t_i s_3 \cdots s_{k}$ for some~$i \in \Z/e$.
\newline
First, $a \not \preccurlyeq q_{k}$ implies~$a\neq t_i$. Next, 
$t_i\vee q_{k} = q_{k} t_{i-1} q_{k}$ and~$\Ht(q_{k}) =k$ imply~$a\neq t_j$ for~$j \neq i$. The possible cases are then:
$$a\vee q_{k} = \left\{ 
\begin{array}{llll}
s_3 q_{k} & = & q_{k} t_i & \mbox{ for } a = s_3, \mbox{ and }\\
s_l q_{k} & = & q_{k} s_{l-1}  & \mbox{ for } a = s_l \mbox{ with } 3 < l \leq k.\phantom{\mbox{ and }}
\end{array}
\right.$$
\item[(3)] Let~$q_{k} = s_{j-1} \cdots s_3 \tau s_3 \cdots s_{k}$ for some~$3 \leq j \leq k$.
\newline
First, $a \not \preccurlyeq q_{k}$ implies~$a\neq s_{j-1}$ (\resp $a\neq t_i$ for~$j=3$).
Next, $s_j\vee q_{k} = q_{k} s_j q_{k}$ and~$\Ht(q_{k}) = k$ imply~$a\neq s_j$.
The possible cases are then:
$$a\vee q_{k} = \left\{ 
\begin{array}{llll}
s_l q_{k} & = & q_{k} s_l& \mbox{ for } a = s_l \mbox{ with } 3 \leq  l < j-1, \\
s_l q_{k} & = & q_{k} s_{l-1}  & \mbox{ for } a = s_l \mbox{ with } j < l \leq k, \mbox{ and } \\
t_i q_{k} & = & q_{k} t_i & \mbox{ for } a = t_i \mbox{ and } j \neq 3.
\end{array}
\right.$$
\end{enumerate}
In each case, we find~$a\vee q_{k} = a q_{k} = q_{k} a'$
with~$a' = q_k\backslash a  \in\TE \cup\SS$ and~$a \backslash q_k = q_k$. 
\end{proof}

\begin{proof}[Proof of Theorem \ref{Simples}] We have to prove a {double inclusion}.
First, we show that if, for each~$k\in\{2,\ldots,r\}$, $p_k$ is a divisor of~$\Lambda_k$, then $p_2 p_3 \ldots p_r$ divides~$\Di$. For each~$k$, let~$q_k$ be the unique element of the monoid satisfying~$p_kq_k= \Lambda_k$.
Lemma~\ref{KeyLemma} implies $\Lambda_j q_k^\sda = q_k \Lambda_{j}$ for each~$q_k$ and each~$j>k$.
Let~$q'_k$ be the element obtained by applying $r-k$ times the map~$\sda$ to~$q_k$. Then we obtain
$$\Di = (p_2 q_2) \Lambda_3 \cdots \Lambda_r = p_2  \Lambda_3 \cdots \Lambda_r q'_2
= \cdots = p_2 p_3 \cdots p_r q_r q'_{r-1} \cdots q'_3 q'_2.$$
Thus~$p_2 p_3 \cdots p_r$ is a divisor of~$\Di$. This completes the first inclusion.

\bigskip

Now let~$k\in\{2,\ldots,r\}$. We prove, by induction on~$k \geq 2$,  that if~$p$ left-divides~$\Lambda_2 \cdots \Lambda_k$ then $p = p_2 \cdots p_k$ holds for some divisors~$p_j$ of~$\Lambda_j$ with~$2\leq j\leq k$. The result holds vacuously for~$k=2$.
 
Assume~$k>2$. By the induction hypothesis, $p\wedge\Lambda_2 \cdots \Lambda_{k-1}$ can be written as $p_2 \cdots p_{k-1}$, where $\Lambda_j$ is~$p_j q_j$ for some~$q_j$ with~$\Ht(q_j)\leq j$. 
We obtain~$p = p_2 \cdots p_{k-1} p_0$ for some 
$p_0 \cleq q'_{k-1} \cdots q'_2 \Lambda_k= \Lambda_kq'^\sda_{k-1} \cdots q'^\sda_2$ where~$q'_j$ is, as above, the element obtained by applying the map~$\sda$, $(k-j)$ times to~$q_j$.
Let~$p_k=p_0\wedge\Lambda_k$, $p_kq_k= \Lambda_k$ and~$p_0 = p_kp'$. 

\begin{figure}[h]
	\begin{center}
		\hspace*{-45pt}
		\includegraphics[width=341.5pt]{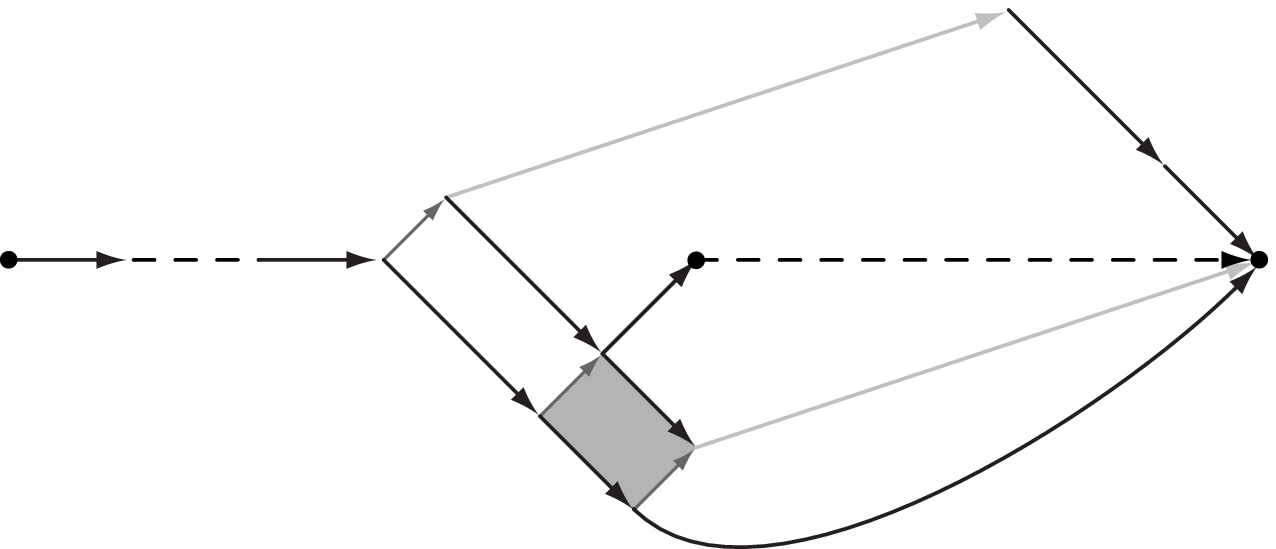}
		\put(-339,68.5){\makebox(0,0){\black$1$}}
		\put(-326,87.5){\makebox(0,0){$p_2$}}
		\put(-258,87.5){\makebox(0,0){$p_{k-1}$}}
		\put(-225,50){\makebox(0,0){$p_k$}}
		\put(-192,17,5){\makebox(0,0){$q_k$}}
		\put(-195,80){\makebox(0,0){$p_k$}}
		\put(-162,47,5){\makebox(0,0){$q_k$}}
		\put(-173,71){\makebox(0,0){$p'_a$}}
		\put(-193,47,5){\makebox(0,0){$a$}}
		\put(-161,14,5){\makebox(0,0){$b$}}
		\put(-235,89,5){\makebox(0,0){$b^\sua$}}
		\put(-177,34){\makebox(0,0){\black \tiny Lemma}}
		\put(-174,26){\makebox(0,0){\black \tiny \ref{TechLemma}}}
		\put(-154,85.5){\makebox(0,0){\black$p$}}
		\put(-46,132){\makebox(0,0){$p_k$}}
		\put(-13,96,5){\makebox(0,0){$q_k$}}
		\put(-75,14,5){\makebox(0,0){$q$}}
		\put(17,68){\makebox(0,0){\black$\Lambda_2\cdots\Lambda_k$}}
		\end{center}
\caption{Proof of Theorem~\ref{Simples}}
\label{ProofSimples}
\end{figure}

We have to show that $p'$ is trivial.
Suppose instead that $p'$ is not trivial. Then we may write
$p' = ap'_a$ for some~$a\in\TE\cup\SS$. 
We have $p' \cleq  q_kq$ with~$q = q'^\sda_{k-1} \cdots q'^\sda_2$ and~$\Ht(q)<k$. 
Thus $a\vee q_k$ must divide~$q_kq$, but 
$a \not \preccurlyeq  q_k$ holds by $\gcd$-ness of~$p_k$. Therefore, by Lemma~\ref{TechLemma}, there exists~$b\in\TE\cup\SS$ satisfying~$aq_k=q_kb$, thus~$p_kaq_k=\Lambda_kb$, hence~$p_ka=b^\sua p_k$. We find~$p=p_2\cdots p_{k-1}b^\sua p_kp'_a$ with~$p_2\cdots p_{k-1}b^\sua\cleq\Lambda_2\cdots\Lambda_{k-1}$, which contradicts~$p_2\cdots p_{k-1}=p\wedge\Lambda_2\cdots\Lambda_{k-1}$. Therefore, $p'$ is trivial, which concludes the induction.
\end{proof}

\subsubsection*{ A note on the reflection group $\GG(e,e,r)$ and the Garside structure}

The number of simples in $\BBo(e,e,r)$ is~$\displaystyle \frac{(2(r-1) + e)!!}{e!!}$, while the number of elements in $\GG(e,e,r)$ is~$e^{r-1} r!$. For~$e=2$, we have equality between these two expressions, corresponding to the classical type~$D_r$ case. For~$e>2$, we have~$\frac{x+e}{e} < \frac{x}{2} + 1$, hence
$$ 
\begin{array}{rcl}
\displaystyle\frac{(2(r-1) + e)!!}{e!!} &= & \displaystyle e^{r-1} \left(\frac{2(r-1)+e}{e}\right) \cdots \left(\frac{4+e}{e}\right)
\left(\frac{2+e}{e}\right) \\
&< &e^{r-1} r (r-1) \cdots (3)(2) = e^{r-1}r!
\end{array}$$ 
(For example, there are 35~simples in $\BBo(3,3,3)$---see Figure~\ref{C:latticeB333}---and $54$~elements in $\GG(3,3,3)$.) In other words, not all elements of $\GG(e,e,r)$ may be represented by simples from $\BBo(e,e,r)$. For example, the element
$$\overline{t_0t_1} = 
\left(\begin{array}{c|c}
 \begin{array}{rl}
 \zeta_e & 0 \\
0 &  \zeta_e^{-1} \\
\end{array}
& \begin{array}{c} \\ {\LARGE 0} \\ \\ \end{array} \\
\hline 
\\
0  & ~ {\LARGE I_{r-2}} ~ \\
\\
\end{array} \right),$$
may not be represented by a simple from $\BBo(e,e,r)$.

The known classical braid monoids for braid groups of real reflection groups all have equality between number of simples and size of reflection group. In this way the monoid~$\BBo(e,e,r)$ appears not to be strictly classical. However in a number of ways it is seen to be dual, or simply different from, the so-called dual braid monoids, and so deserves a name like post-classical. This is the content of the next subsection.

\subsection{ How classical is $\BBo(e,e,r)$?}
\label{ss:howClassical}

The braid group~$\BB(e,e,r)$ seems to admit no classical braid monoid, in the sense that its submonoid generated by the generators of~\cite{bmr}---providing a minimal generating set---is indeed not finitely presented (see~\cite{co,bcArXiv,bc}). Recall that the braid monoid~$\BBo(e,e,r)$ (\resp the dual braid monoid~$\BBd(e,e,r)$) coincides with the classical braid monoid~$\BBc(D_r)$ (\resp the dual braid monoid~$\BBd(D_r)$) for~$e=2$ and with the dual braid monoid~$\BBd(I_2(e))$ for~$r=2$. 

In this subsection we look at various properties of the monoid~$\BBo(e,e,r)$, which mainly deal with enumerative aspects, and consider them in relation to known classical and dual braid monoids for other braid groups. While it cannot be considered as strictly classical, $\BBo(e,e,r)$ has much in common with the classical braid monoids than with the dual braid monoids, and it could be considered as a \emph{post-classical} braid monoid. The following three observations allow to legitimate this terminology. 

\subsubsection{ A kind of duality}
\label{sss:duality}

According to~\cite{bessisdual}, the duality terminology in the context of Garside monoids for braid groups of finite real reflection groups~$W$ can be justified by the numerical facts summarized in the following table:\footnote{Each of the braid group presentations constructed in~\cite{bessiszariski} corresponds to a regular degree~$d$. The product of the generators raised to the power~$d$ (which is the order of the image of this product in the reflection group), is always central. See also~\cite{bessistopology}.}

\[
\setstretch{1.2}
\begin{array}{ccc}
\toprule
								& \quad\BBc(W)\quad	& \quad\BBd(W)\quad \\
\midrule
\mbox{Product of the atoms}			& \bc					& \bw_0 \\
\Delta 							& \bw_0				& \bc \\
\midrule
\mbox{Number of atoms}				& n 					& N \\
\mbox{Length of~$\Delta$}			& N					& n \\
\midrule
\mbox{Order of~$a\mapsto a^\Delta$}	& 2					& h \\
\mbox{Regular degree}				& h					& 2 \\
\bottomrule
\end{array}
\]

\def\duality{\hbox{\boldmath$(r-1)$}}

\bigbreak
\vbox{A different kind of duality can be observed between the monoids~$\BBo(e,e,r)$ and~$\BBd(e,e,r)$:
\[
\setstretch{1.4}
\begin{array}{ccc}
\toprule
						& \BBo(e,e,r)	& \BBd(e,e,r)\\
\midrule
\mbox{Number of atoms}		& e+r-2		& (e+r-2)\duality \\
\mbox{Length of $\Delta$}	& r\duality		& r \\
\mbox{Order of $\displaystyle a\mapsto a^\Delta$}
						& \displaystyle\frac{e}{e\wedge r}
									& \displaystyle\frac{e\duality}{e\wedge r}\\
\bottomrule
\end{array}
\]}

Thus the monoid~$\BBo(e,e,r)$ may be considered to be a kind of dual of the dual braid monoid~$\BBd(e,e,r)$. The latter fits into the general framework of dual braid monoids defined in~\cite{bessisdual}, but it satisfies only some of the numerical properties summarized in the first table above. In a parallel way, $\BBo(e,e,r)$ could be named simply classical. Here, we could mention that neither~$\BBo(e,e,r)$ nor~$\BBd(e,e,r)$ can be produced by~\cite[Theorem~0.1]{bessiszariski}, so in particular, the notion of regular degree is not relevant.

\subsubsection{ Poincar\'e polynomial}
\label{sss:Poincare}

For a given Garside monoid~$M$, the polynomial $P_M
$ is defined by $P_M(q)  = \sum a_n q^n$  where $a_n$ denotes the number of length~$n$ simples in~$M$ (see earlier comments preceding Corollary~\ref{corpp}). 
In the case of the classical braid monoids associated to finite Coxeter group~$W$ (for example $\BBc(A_n)$, $\BBc(B_n)$, $\BBc(D_n)$, etc), this polynomial coincides with the Poincar\'e polynomial of~$W$, where $a_n$ is the number of length~$n$ elements of with respect to a set of simple reflections. In these cases,
we have:
\[P^+_{W}(q)=\prod_{k=1}^{r}(1+q+\cdots+q^{d_k-1})\]
where the numbers $d_k$ denote the reflection degrees.
The polynomial~$P^\oplus_{(e,e,r)}(q)$ does not satisfy this general formula, except for the cases~$e=2$ or~$r=2$. However the similarity of factorization of the Poincar\'e polynomial (see below) suggests describing~$\BBo(e,e,r)$ again as a post-classical braid monoid.
 
\begin{eqnarray*}
P^\oplus_{(e,e,r)}(q)&=&\prod_{k=1}^{r}(1+q+\cdots+q^{k-2}+eq^{k-1}+q^{k}+\cdots+q^{2k-2}),\\
P^+_{A_n}(q)&=&\prod_{k=1}^{n}(1+q+\cdots+q^k),\\
P^+_{B_n}(q)&=&\prod_{k=1}^{n}(1+q+\cdots+q^{2k-1}),\\
P^+_{D_n}(q)&=&(1+q+\cdots+q^{n-1})\prod_{k=1}^{n-1}(1+q+\cdots+q^{2k-1}).
\end{eqnarray*}

\subsubsection{ Zeta polynomial}
\label{sss:Zeta}

For a given Garside monoid~$M$, the \emph{zeta polynomial $Z_M$} can be defined by requiring that $Z_M(q)$ be the number of length~$q-1$~multichains~$a_1\cleq\cdots\cleq a_{q-1}$ in the lattice of simples of~$M$. Whenever $\GG(de,e,r)$ is well-generated (which is the case for $\GG(e,e,r)$), the zeta polynomial of the dual braid monoid~$\BBd(de,e,r)$ admits a nice factorization:
\[Z^\times_{(de,e,r)}(q)=\prod_{k=1}^{r}\frac{d_k+d_r(q-1)}{d_k},\]
where $d_1\leq\cdots\leq d_r$ are the reflection degrees (see~\cite{chap,reiner,athareiner}). On the contrary, the zeta polynomial~$Z^+_{(de,e,r)}$ of the classical braid monoid~$\BBc(de,e,r)$ (when defined) is not known to admit any nice factorization. In this way, $\BBo(e,e,r)$ has more in common with classical braid monoids than dual braid monoids. For instance, we find:
\begin{eqnarray*}Z_{(3,3,3)}^\oplus(q)
&=&\frac{11q^6+171q^5+985q^4+2585q^3+2964q^2+1444q+240}{240}\\
&=&\frac{(q+1)(q+6)(11q^4+94q^3+261q^2+194q+40)}{240}.\end{eqnarray*}

\subsection{ Conclusion}
\label{ss:conclusion}

While we feel that the new Garside monoid~$\BB(e,e,r)$ deserves the description post-classical, we do not exclude the possibility that this is the best presentation available, and would like to conclude with a motivating question:

\begin{question} Does $\BB(e,e,r)$ admit other Garside structures?
\end{question}

\section{ Acknowledgment}

The completion of this work was made possible from a  collaboration begun at the GDR Tresses conference in Autrans, 2004 (GDR 2105 CNRS, ``Tresses et Topologie de basse dimension" \cite{gdr}).

The first author would also like to thank the European Union for a Marie Curie Postdoctoral Research Award at the time this work was undertaken.

The authors thank Ivan Marin for pointing out to them that, during his thesis work supervised by~Daan Krammer, Mark Cummings had discovered the same Garside structure for~$\BB(e, e, r)$, even though proofs and motivations are essentially different.

\end{document}